\author{Dilip Raghavan}
\thanks{Author partially supported by National University of Singapore 
research grant number R-146-000-211-112.}
\address{Department of Mathematics \\
National University of Singapore\\
Singapore 119076}
\email{raghavan@math.nus.edu.sg}
\urladdr{http://www.math.toronto.edu/raghavan}
\date{\today}
\subjclass[2010]{03E17, 03E55, 03E05, 03E20}
\keywords{asymptotic density, cardinal invariants, dominating number, weakly compact cardinal}
\title{More on the density zero ideal}
\DeclareSymbolFont{symbolsC}{U}{txsyc}{m}{n}
\DeclareMathSymbol{\multimapdot}{\mathrel}{symbolsC}{20}
\newtheorem{Theorem}{Theorem}
\newtheorem{Claim}[Theorem]{Claim}
\newtheorem{Lemma}[Theorem]{Lemma}
\newtheorem{Cor}[Theorem]{Corollary}
\newtheorem{Question}[Theorem]{Question}
\theoremstyle{definition}
\newtheorem{Def}[Theorem]{Definition}
\theoremstyle{remark}
\newtheorem{remark}[Theorem]{Remark}
\newcommand{\forces}{\Vdash}
\newcommand{\restrict}{\mathord{\upharpoonright}}
\newcommand{\forallbutfin}{{\forall}^{\infty}}
\newcommand{\existsinf}{{\exists}^{\infty}}
\renewcommand{\b}{\mathfrak{b}}
\renewcommand{\d}{{\mathfrak{d}}}
\newcommand{\s}{\mathfrak{s}}
\newcommand{\spr}{\mathfrak{s}\mathord{\left(\mathfrak{pr}\right)}}
\newcommand{\rr}{{\mathfrak{r}}}
\newcommand{\uu}{{\mathfrak{u}}}
\newcommand{\p}{{\mathfrak{p}}}
\renewcommand{\[}{\left[}
\renewcommand{\]}{\right]}
\renewcommand{\P}{\mathbb{P}}
\newcommand{\Q}{\mathbb{Q}}
\newcommand{\lc}{\left|}
\newcommand{\rc}{\right|}
\newcommand\ZFC{\mathrm{ZFC}}
\newcommand\CH{\mathrm{CH}}
\newcommand{\BS}{{\omega}^{\omega}}
\DeclareMathOperator{\non}{non}
\DeclareMathOperator{\cov}{cov}
\DeclareMathOperator{\add}{add}
\DeclareMathOperator{\dom}{dom}
\DeclareMathOperator{\ran}{ran}
\newcommand{\Pset}{\mathcal{P}}
\newcommand{\NNN}{\mathcal{N}}
\newcommand{\AAA}{{\mathcal{A}}}
\newcommand{\C}{{\mathscr{C}}}
\newcommand{\GGG}{{\mathcal{G}}}
\newcommand{\ZZZ}{{\mathcal{Z}}}
\newcommand{\cube}{{\[\omega\]}^{\omega}}
\newcommand{\I}{{\mathcal{I}}}
\newcommand{\F}{{\mathcal{F}}}
\newcommand{\V}{{\mathbf{V}}}
\newcommand{\VG}{{{\mathbf{V}}[G]}}
\newcommand{\VP}{{\mathbf{V}}^{\P}}
\newcommand{\Ss}{\mathbb{S}}
\newcommand{\RR}{\mathbb{R}}
\newcommand{\pr}[2]{\langle #1, #2 \rangle}
\newcommand{\seq}[4]{\langle {#1}_{#2}: #2 #3 #4 \rangle}
\begin{document}
\begin{abstract}
The main result of this paper is an improvement of the upper bound on the cardinal invariant ${\cov}^{\ast}({\ZZZ}_{0})$ that was discovered in~\cite{cov_z0}.
Here ${\ZZZ}_{0}$ is the ideal of subsets of the set of natural numbers that have asymptotic density zero.
This improved upper bound is also dualized to get a better lower bound on the cardinal ${\non}^{\ast}({\ZZZ}_{0})$.
En route some variations on the splitting number are introduced and several relationships between these variants are proved.
\end{abstract}
\maketitle
\section{Introduction} \label{sec:intro}
We use $\omega$ to denote the set of natural number in keeping with usual set-theoretic convention.
Recall that a set $A \subset \omega$ is said to have \emph{asymptotic density 0} if $\displaystyle\lim_{n \rightarrow \infty}{\displaystyle\frac{\lc A \cap n \rc}{n}} = 0$.
By ${\ZZZ}_{0}$ we denote the set $\left\{ A \subset \omega: A \ \text{has asymptotic density} \ 0 \right\}$.
Recall that given a set $a$, $\I$ is said to be an \emph{ideal} on $a$ if $\I$ is a subset of $\Pset(a)$ such that the following conditions hold: if $b \subseteq a$ is finite, then $b \in \I$; if $b \in \I$ and $c \subseteq b$, then $c \in \I$; if $b \in \I$ and $c \in \I$, then $b \cup c \in \I$; and $a \notin \I$.
It is easily seen that ${\ZZZ}_{0}$ is an ideal on $\omega$.
It is moreover a \emph{P-ideal}, which means that for every collection $\{{a}_{n}: n \in \omega\} \subset {\ZZZ}_{0}$, there exists $a \in {\ZZZ}_{0}$ such that $\forall n \in \omega \[{a}_{n} \; {\subset}^{\ast} \; a\]$, where $X \: {\subset}^{\ast} \: Y$ if and only if $X \setminus Y$ is finite.
${\ZZZ}_{0}$ is also a \emph{tall ideal on $\omega$}, which means that $\forall a \in \cube \exists b \in {\[a\]}^{\omega}\[b \in {\ZZZ}_{0}\]$.
In terms of the Borel hierarchy of $\Pset(\omega)$, ${\ZZZ}_{0}$ is ${F}_{\sigma\delta}$ but not ${G}_{\delta\sigma}$.

Cardinal invariants associated with such tall analytic P-ideals have been studied in several works, principally by Hern{\'a}ndez-Hern{\'a}ndez and Hru{\v{s}}{\'a}k~\cite{michaelstarinv}.
Among the various invariants that have been considered, ${\cov}^{\ast}({\ZZZ}_{0})$ and ${\non}^{\ast}({\ZZZ}_{0})$ are of particular interest.
\begin{Def}\label{def:covandnon}
  \begin{align*}
    &{\cov}^{\ast}({\ZZZ}_{0}) = \min\{\lc \F \rc: \F \subset {\ZZZ}_{0} \wedge \forall a \in \cube \exists b \in \F\[\lc a \cap b \rc = {\aleph}_{0}\]\},\\
    &{\non}^{\ast}({\ZZZ}_{0}) = \min\{\lc \F \rc: \F \subset \cube \wedge \forall b \in {\ZZZ}_{0}\exists a \in \F\[\lc a \cap b \rc < {\aleph}_{0}\]\}.
  \end{align*}
\end{Def}
Of course there is nothing special about ${\ZZZ}_{0}$ here and these invariants can be defined for any tall P-ideal $\I$ on $\omega$.
In fact, these invariants are special cases of the invariants $\cov(\I)$ and $\non(\I)$, which make sense for any ideal $\I$ on any set $X$.
To see how, for each $a \subset \omega$, let $\hat{a} = \{b \subset \omega: \lc b \cap a \rc = {\aleph}_{0}\}$.
This is a ${G}_{\delta}$ subset of $\Pset(\omega)$.
Let ${\hat{\ZZZ}}_{0} = \left\{X \subset \Pset(\omega): \exists a \in {\ZZZ}_{0}\[X \subset \hat{a}\]\right\}$.
Now ${\hat{\ZZZ}}_{0}$ is a $\sigma$-ideal on $\Pset(\omega)$ generated by Borel sets, and it is not hard to show (see Proposition 1.2 of \cite{michaelstarinv}) that $\cov({\hat{\ZZZ}}_{0}) = {\cov}^{\ast}({\ZZZ}_{0})$ and that $\non({\hat{\ZZZ}}_{0}) = {\non}^{\ast}({\ZZZ}_{0})$.

${\ZZZ}_{0}$ turned out to be a critical object of study in \cite{michaelstarinv}, where the invariants associated to ${\ZZZ}_{0}$ were shown to be closely connected to many others, including $\add(\NNN), \cov(\NNN)$, and $\non(\NNN)$.
In that paper, Hern{\'a}ndez-Hern{\'a}ndez and Hru{\v{s}}{\'a}k asked whether ${\cov}^{\ast}({\ZZZ}_{0}) \leq \d$ (Question 3.23(a) of \cite{michaelstarinv}).
Their question was positively answered in \cite{cov_z0}.
Furthermore the proof in \cite{\cov_z0} also yielded the dual inequality $\b \leq {\non}^{\ast}({\ZZZ}_{0})$.
We improve both of these bounds in this paper.
We show that $\min\{\d, \rr\} \leq {\non}^{\ast}({\ZZZ}_{0})$ and that ${\cov}^{\ast}({\ZZZ}_{0}) \leq \max\{\b, \spr\}$, where $\spr$ is a variant of $\s$ that is not known to be distinguishable from $\s$.

The second of our inequalities has implications for what types of forcings can be used to diagonalize $\V \cap {\ZZZ}_{0}$.
Recall that a forcing notion $\P$ in a ground model $\V$ is said to \emph{diagonalize $\V \cap {\ZZZ}_{0}$} if there is an $\mathring{A} \in \VP$ such that ${\forces}_{\P} \mathring{A} \in \cube$ and for each $X \in \V \cap {\ZZZ}_{0}$, ${\forces}_{\P} \lc X \cap \mathring{A} \rc < {\aleph}_{0}$.
Forcings that diagonalize $\V \cap {\ZZZ}_{0}$ tend to increase ${\cov}^{\ast}({\ZZZ}_{0})$.
A celebrated result of Laflamme from \cite{zapping} is that every ${F}_{\sigma}$ ideal can be diagonalized by a proper $\BS$-bounding forcing.
Until the work in \cite{cov_z0}, it was unclear whether a similar result could also be proved for all ${F}_{\sigma\delta}$ P-ideals.
The proof of the inequality ${\cov}^{\ast}({\ZZZ}_{0}) \leq \d$ from \cite{cov_z0} shows that any proper forcing that diagonalizes $\V \cap {\ZZZ}_{0}$ necessarily adds an unbounded real, and since ${\ZZZ}_{0}$ is an ${F}_{\sigma\delta}$ P-ideal, it shows that Laflamme's theorem is, in a certain sense, best possible.
The proof of the inequality ${\cov}^{\ast}({\ZZZ}_{0}) \leq \max\{\b, \spr\}$ given in Section \ref{sec:bounds} has a similar consequence.
It shows that any proper forcing that diagonalizes $\V \cap {\ZZZ}_{0}$ must either add a real that dominates $\V \cap \BS$ or it must add a real that is not promptly split by $\V \cap {(\Pset(\omega))}^{\omega}$ (this notion is introduced in Definition \ref{def:prompt}).
We will also show in Section \ref{sec:variants} that a Suslin c.c.c.\@ forcing cannot add a real that is not promptly split by $\V \cap {(\Pset(\omega))}^{\omega}$, yielding the conclusion that any Suslin c.c.c.\@ poset that diagonalizes $\V \cap {\ZZZ}_{0}$ necessarily adds a dominating real.

The two main inequalities of this paper are obtained by analyzing certain combinatorial variants of the notion of a splitting family.
The first section of this paper is devoted to introducing and studying these variants.
At present, it is unclear if these variants ultimately lead to a new cardinal invariant that is distinguishable from $\s$ (see Question \ref{q:sprsb}).

We end this introduction by fixing some notation that will occur throughout the paper.
$A \subset B$ means $\forall a\[a \in A \implies a \in B\]$.
Thus the symbol ``$\subset$'' does not denote proper subset.
The expression ``$\existsinf x \ldots$'' abbreviates the quantifier ``there exist infinitely many $x$ such that \ldots'', and the dual expression ``$\forallbutfin x \ldots$'' means ``for all but finitely many $x$ \ldots''.
Given a function $f$ and a set $X \subset \dom(f)$, $f''X$ denotes the image of $X$ under $f$ -- that is, $f''X = \{f(x): x \in X\}$.
We use standard cardinal invariants such as $\s$, $\uu$, $\p$, $\rr$, and $\b$, whose definitions may be found in \cite{blasssmall}.
\section{Some variants of the splitting number} \label{sec:variants} 
Several variations on the notion of a splitting family are studied in this section.
One of these variants involves the existence of a type of strong coloring.
It turns out that all of these variations ultimately lead to the same cardinal invariant, which we denote $\spr$.
It will be shown that $\spr$ behaves very similarly to $\s$.
We adopt the convention that for a set $x \subset \omega$, ${x}^{0} = x$ and ${x}^{1} = \omega \setminus x$; this will make certain definitions easier to state.
\begin{Def} \label{def:prompt}
 Let $X = \seq{x}{i}{\in}{\omega}$ be a sequence of elements of $\Pset(\omega)$.
 We say that $X$ \emph{promptly splits} $a$ if for each $n \in \omega$ and each $\sigma \in {2}^{n + 1}$, $\left( {\bigcap}_{i < n + 1}{{x}^{\sigma(i)}_{i}} \right) \cap a$ is infinite.
 A family $\F \subset {\left( \Pset(\omega) \right)}^{\omega}$ is said to be a \emph{promptly splitting family} if for each $a \in {\[\omega\]}^{\omega}$, there exists $X \in \F$ which promptly splits $a$.
\end{Def}
\begin{Def} \label{def:omegas}
 Let $P = \seq{x}{i}{\in}{\omega}$ be a partition of $\omega$ (that is, ${\bigcup}_{i \in \omega}{{x}_{i}} = \omega$ and for any $i < j < \omega$, ${x}_{i} \cap {x}_{j} = 0$).
 We say that $P$ \emph{splits} $a$ if for each $i \in \omega$ ${x}_{i} \cap a$ is infinite.
 A family of partitions $\F$ is called a \emph{splitting family of partitions} if for each $a \in \cube$, there exists $P \in \F$ which splits $a$.
 \begin{align*}
  \spr = \min\{\lc \F \rc: \F \ \text{is a splitting family of partitions}\}.
 \end{align*}
\end{Def}
\begin{Lemma} \label{lem:promptomegas}
 $\spr = \min\{\lc \F \rc: \F \subset {\left( \Pset(\omega) \right)}^{\omega} \wedge \F \ \text{is a promptly splitting family}\}$.
\end{Lemma}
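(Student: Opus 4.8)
The plan is to prove the two inequalities that together establish the identity. Write $\mathfrak{p}\mathfrak{r}$ for the right-hand side, $\min\{\lc \F \rc: \F \subset {(\Pset(\omega))}^{\omega} \wedge \F$ is a promptly splitting family$\}$.

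For the inequality $\mathfrak{p}\mathfrak{r} \leq \spr$, I would start with a splitting family of partitions $\F$ of size $\spr$ and convert each partition into a single sequence in ${(\Pset(\omega))}^{\omega}$ that promptly splits everything the partition splits. Given a partition $P = \seq{x}{i}{\in}{\omega}$, the natural candidate is to take, for each $i$, the set $y_i = x_i$ itself (or perhaps $y_i = \bigcup_{j \le i} x_j$, or some bookkeeping that enumerates all finite Boolean combinations). The point is that if $P$ splits $a$, so that every $x_i \cap a$ is infinite, then for any $n$ and any $\sigma \in 2^{n+1}$ the set $\left(\bigcap_{i < n+1} x_i^{\sigma(i)}\right) \cap a$ must be shown infinite. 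This is not literally true for the raw partition, because if $\sigma$ is identically $0$ then $\bigcap_{i<n+1} x_i = \emptyset$. So the correct move is to build from $P$ a \emph{countable} family of sets whose finite Boolean combinations are all infinite on $a$ — for instance, let $z_k = \bigcup_{i \in A_k} x_i$ where $\seq{A}{k}{\in}{\omega}$ is a fixed independent family of infinite co-infinite subsets of $\omega$; then each Boolean combination $\bigcap_{k<n+1} z_k^{\sigma(k)}$ is a union of infinitely many $x_i$'s, hence meets $a$ infinitely whenever $P$ splits $a$. Interleaving this single sequence $\seq{z}{k}{\in}{\omega}$ over all $P \in \F$ gives a promptly splitting family of the same cardinality, so $\mathfrak{p}\mathfrak{r} \le \spr$.

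For the reverse inequality $\spr \leq \mathfrak{p}\mathfrak{r}$, I would take a promptly splitting family $\F$ of size $\mathfrak{p}\mathfrak{r}$ and manufacture from each $X = \seq{x}{i}{\in}{\omega} \in \F$ a partition of $\omega$ (or perhaps countably many partitions, still keeping the total count at $\mathfrak{p}\mathfrak{r}$) that splits every $a$ that $X$ promptly splits. The idea: the prompt splitting condition says that every ``branch-type'' finite Boolean combination $\bigcap_{i<n+1} x_i^{\sigma(i)}$ meets $a$ infinitely. For each $\sigma \in 2^{<\omega}$, set $B_\sigma = \bigcap_{i < \lc\sigma\rc} x_i^{\sigma(i)}$; then $\{B_\sigma : \sigma \in 2^n\}$ is a partition of $\omega$ into $2^n$ pieces for each fixed $n$, and by prompt splitting each piece meets $a$ infinitely when $X$ promptly splits $a$. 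Taking a single partition won't quite capture the condition for \emph{all} $n$ simultaneously with finitely many pieces, but a partition of $\omega$ into the countably many "leaves" $B_\sigma$ with $\sigma(\lc\sigma\rc - 1) = 0$ and $\sigma(i) = 1$ for $i < \lc\sigma\rc -1$ — or more cleanly, a suitable refinement coding $2^{<\omega}$ into $\omega$ — does the job: one wants a countable partition all of whose pieces are of the form $B_\sigma$ (up to finite modification) so that splitting by the partition follows from prompt splitting by $X$. So associate to $X$ the partition $\{B_{1^k 0} : k \in \omega\} \cup \{\bigcap_i x_i^1\}$-style enumeration, or better, fix a bijection $\omega \to 2^{<\omega}$ and diagonalize; each resulting partition is split by every $a$ promptly split by $X$, and there are at most $\mathfrak{p}\mathfrak{r}$ of them, so $\spr \le \mathfrak{p}\mathfrak{r}$.

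The main obstacle is getting the bookkeeping in the $\spr \le \mathfrak{p}\mathfrak{r}$ direction exactly right: a partition has only countably many pieces, each of which must be met infinitely often by $a$, whereas the prompt splitting condition quantifies over the uncountably many $\sigma \in 2^{<\omega}$ but only asks infinitude of each individual finite intersection. One must carve $\omega$ into countably many infinite pieces, each sitting inside some $B_\sigma$, in such a way that \emph{every} relevant $B_\sigma$ gets "represented" — the cleanest fix is to note that for a fixed $n$, $\{B_\sigma : \sigma \in 2^n\}$ is already a finite partition each of whose pieces meets $a$ infinitely, and then to take a common refinement / diagonal union over $n$, at the cost of replacing each $X$ by countably many partitions rather than one. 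Since a countable union of $\mathfrak{p}\mathfrak{r}$-many families still has size $\mathfrak{p}\mathfrak{r}$ (as $\mathfrak{p}\mathfrak{r} \ge \aleph_0$), this is harmless. Once both inequalities are in hand, $\spr = \mathfrak{p}\mathfrak{r}$ follows.
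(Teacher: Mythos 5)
Your first inequality (from a splitting family of partitions to a promptly splitting family) is exactly the paper's argument and is correct: given a partition $P = \seq{x}{i}{\in}{\omega}$ that splits $a$, put ${z}_{k} = {\bigcup}_{i \in {C}_{k}}{{x}_{i}}$ for a fixed independent family $\seq{C}{k}{\in}{\omega}$; since the ${x}_{i}$ cover $\omega$, $\omega \setminus {z}_{k} = {\bigcup}_{i \notin {C}_{k}}{{x}_{i}}$, so every pattern ${\bigcap}_{k < n + 1}{{z}^{\sigma(k)}_{k}}$ contains an entire block ${x}_{i}$ with $i \in {\bigcap}_{k < n + 1}{{C}^{\sigma(k)}_{k}} \neq 0$, and that block meets $a$ infinitely.

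The reverse direction is where there is a genuine gap: none of the concrete constructions you propose yields an object satisfying Definition \ref{def:omegas}. Taking the leaves ${x}^{1}_{0} \cap \dotsb \cap {x}^{1}_{k - 1} \cap {x}_{k}$ together with the residual block ${\bigcap}_{i \in \omega}{{x}^{1}_{i}}$ fails because prompt splitting gives no information at all about that \emph{infinite} intersection: it can be empty (so you do not even get a partition whose blocks could all meet $a$), finite, or infinite but almost disjoint from $a$; so the resulting family need not split $a$ even when $X$ promptly splits $a$. Your fallback does not repair this: the finite partitions $\left\{ {\bigcap}_{i < n}{{x}^{\sigma(i)}_{i}}: \sigma \in {2}^{n} \right\}$ are not $\omega$-indexed partitions in the sense required (padding with empty blocks makes them split nothing), and their common refinement over all $n$ consists of the atoms ${\bigcap}_{i \in \omega}{{x}^{f(i)}_{i}}$ for $f \in {2}^{\omega}$, which are again uncontrolled, since prompt splitting only constrains finite Boolean combinations. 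The missing idea is a small trimming step, which is how the paper argues: replace each ${x}_{i}$ by ${y}_{i} = {x}_{i} \setminus i$. This is a finite modification of each set, so if $X$ promptly splits $a$ then every finite Boolean combination of the ${y}_{i}$ still meets $a$ infinitely; and now ${\bigcap}_{i \in \omega}{{y}_{i}} = 0$, so the ``first exit'' blocks ${z}_{n} = {y}_{0} \cap \dotsb \cap {y}_{n - 1} \cap \left( \omega \setminus {y}_{n} \right)$ are pairwise disjoint, cover $\omega$ (each $l$ lies in the block given by the least $n$ with $l \notin {y}_{n}$), and each meets $a$ infinitely. (Alternatively, keep your leaves and absorb the uncovered points by adding the point $i$ to the $i$-th block when $i$ is not yet covered, as in the proof of Lemma \ref{lem:sticks}.) With such a fix a single partition per $X$ suffices, and the inequality $\spr \leq \min\{\lvert \F \rvert: \F \ \text{is a promptly splitting family}\}$ follows.
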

\begin{proof}
 First let $\F \subset {\left( \Pset(\omega) \right)}^{\omega}$ be any promptly splitting family.
 Let $\{ {X}_{\alpha}: \alpha < \kappa \}$ be an enumeration of $\F$, where $\kappa = \lc  \F \rc$.
 For each $\alpha < \kappa$, write ${X}_{\alpha} =  \langle {x}_{\alpha, i}: i < \omega \rangle$, and define ${y}_{\alpha, i} = {x}_{\alpha, i} \setminus i$.
 For each $n \in \omega$, define ${\sigma}_{n} \in {2}^{n + 1}$ as follows: for $i < n$, ${\sigma}_{n}(i) = 0$ and ${\sigma}_{n}(n) = 1$.
 Define ${z}_{\alpha, n} = {\bigcap}_{i < n + 1}{{y}^{{\sigma}_{n}(i)}_{\alpha, i}}$.
 It is easy to see that if $m < n < \omega$, then ${z}_{\alpha, m} \cap {z}_{\alpha, n} = 0$.
 Also for any $l \in \omega$ there is a minimal $n \in \omega$ such that $l \notin {y}_{\alpha, n}$ because ${\bigcap}_{n \in \omega}{{y}_{\alpha, n}} = 0$.
 Then $l \in {z}_{\alpha, n}$, for this minimal $n$.
 Thus ${P}_{\alpha} = \langle {z}_{\alpha, n}: n \in \omega\rangle$ is a partition of $\omega$.
 Moreover it is clear that for any $a \in \cube$, if ${X}_{\alpha}$ promptly splits $a$, then ${P}_{\alpha}$ splits $a$.
 Therefore $\{{P}_{\alpha}: \alpha < \kappa\}$ is a splitting family of partitions.
 
 In the other direction, suppose that $\F$ is any splitting family of partitions.
 Let $\{{P}_{\alpha}: \alpha < \kappa\}$ enumerate $\F$, where $\kappa = \lc \F \rc$, and write ${P}_{\alpha} = \langle {y}_{\alpha, n}: n < \omega \rangle$, for each $\alpha < \kappa$.
 Fix an independent family $\seq{C}{i}{\in}{\omega}$ of subsets of $\omega$.
 For each $\alpha < \kappa$ and $i \in \omega$, define ${x}_{\alpha, i} = {\bigcup}_{n \in {C}_{i}}{{y}_{\alpha, n}}$.
 Note that $\omega \setminus {x}_{\alpha, i} = {\bigcup}_{n \in \omega \setminus {C}_{i}}{{y}_{\alpha, n}}$.
 Put ${X}_{\alpha} = \langle {x}_{\alpha, i}: i < \omega \rangle \in {\left( \Pset(\omega) \right)}^{\omega}$.
 We check that for any $\alpha < \kappa$ and any $a \in \cube$, if ${P}_{\alpha}$ splits $a$, then ${X}_{\alpha}$ promptly splits $a$.
 This would show that $\{{X}_{\alpha}: \alpha < \kappa\}$ is a promptly splitting family and conclude the proof.
 Fix $\alpha < \kappa$ and $a \in \cube$.
 Suppose ${P}_{\alpha}$ splits $a$.
 Fix any $n \in \omega$ and $\sigma \in {2}^{n + 1}$.
 Since $\seq{C}{i}{\in}{\omega}$ is an independent family, ${\bigcap}_{i < n + 1}{{C}^{\sigma(i)}_{i}}$ is non-empty.
 If $m \in {\bigcap}_{i < n + 1}{{C}^{\sigma(i)}_{i}}$, then ${y}_{\alpha, m} \subset {\bigcap}_{i < n + 1}{{x}^{\sigma(i)}_{\alpha, i}}$.
 Since ${y}_{\alpha, m} \cap a$ is infinite, $\left( {\bigcap}_{i < n + 1}{{x}^{\sigma(i)}_{\alpha, i}} \right) \cap a$ is also infinite, as needed.
\end{proof}
Thus the ``$\mathfrak{p}\mathfrak{r}$'' of $\spr$ can either stand for ``partition'' or for ``prompt''.
We next show that $\spr$ is also the least cardinal for which a certain type of strong coloring exists.
\begin{Def} \label{def:tortuous}
 Let $\kappa$ be any cardinal.
 We say that a coloring $c: \kappa \times \omega \times \omega \rightarrow 2$ is \emph{tortuous} if for each $A \in \cube$ and each partition of $\kappa$, $\seq{K}{n}{\in}{\omega}$, there exists $n \in \omega$ such that
 \begin{align} \label{def:tortuouscond}
  \forall \sigma \in {2}^{n + 1} \exists \alpha \in {K}_{n} \exists k \in A\[k > n \wedge \forall i < n + 1\[\sigma(i) = c(\alpha, k, i)\]\].
 \end{align}
 We will say that such a $c$ is a \emph{tortuous coloring on $\kappa$}.
\end{Def}
It is not obvious from the definition that there are tortuous colorings.
The next lemma shows that a tortuous coloring always exists on some cardinal $\leq {2}^{{\aleph}_{0}}$.
\begin{Lemma} \label{lem:tortuous}
 Let $\seq{X}{\alpha}{<}{\kappa}$ be a promptly splitting family.
 There exists a tortuous coloring on $\kappa$.
\end{Lemma}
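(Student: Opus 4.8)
The plan is to read the desired coloring directly off the promptly splitting family, using the natural idea that $c(\alpha, k, i)$ should record on which side of ${x}_{\alpha, i}$ the number $k$ lies. Writing ${X}_{\alpha} = \langle {x}_{\alpha, i}: i \in \omega \rangle$ for each $\alpha < \kappa$, I would define $c : \kappa \times \omega \times \omega \rightarrow 2$ by letting $c(\alpha, k, i)$ be the unique $j \in 2$ such that $k \in {x}^{j}_{\alpha, i}$; this is well defined by the convention ${x}^{0} = x$ and ${x}^{1} = \omega \setminus x$. The point of this definition is that for $n \in \omega$ and $\sigma \in {2}^{n + 1}$, the clause ``$\forall i < n + 1\[\sigma(i) = c(\alpha, k, i)\]$'' appearing in (\ref{def:tortuouscond}) says exactly that $k \in {\bigcap}_{i < n + 1}{{x}^{\sigma(i)}_{\alpha, i}}$.

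Granting this reformulation, checking that $c$ is tortuous is quick. I would fix $A \in \cube$ and a partition $\seq{K}{n}{\in}{\omega}$ of $\kappa$. Since $\seq{X}{\alpha}{<}{\kappa}$ is a promptly splitting family, I can choose $\alpha < \kappa$ such that ${X}_{\alpha}$ promptly splits $A$, and then let $n$ be the unique natural number with $\alpha \in {K}_{n}$. The claim is that this $n$ witnesses (\ref{def:tortuouscond}), with $\alpha$ itself serving as the required member of ${K}_{n}$ for every $\sigma$: given $\sigma \in {2}^{n + 1}$, prompt splitting tells us that $\left( {\bigcap}_{i < n + 1}{{x}^{\sigma(i)}_{\alpha, i}} \right) \cap A$ is infinite, so I can pick $k$ in this set with $k > n$, and then $k \in A$, $k > n$, $\alpha \in {K}_{n}$, and $\forall i < n + 1\[\sigma(i) = c(\alpha, k, i)\]$ all hold, which is precisely the content of (\ref{def:tortuouscond}).

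I do not expect a genuine obstacle here; the one thing to get right is the observation that a single $\alpha$ promptly splitting $A$ simultaneously handles all $\sigma \in {2}^{n + 1}$, so the partition $\seq{K}{n}{\in}{\omega}$ enters only to pin down which index $n$ must be used, and nothing from the earlier lemmas is needed beyond the definitions. The sole minor point of care is that a ``partition of $\kappa$'' may be allowed to have empty blocks, but the block ${K}_{n}$ containing the chosen $\alpha$ is nonempty, which is all the argument relies on.
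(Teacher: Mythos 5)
Your proposal is correct and follows essentially the same route as the paper's proof: define $c(\alpha, k, i)$ to record whether $k \in {x}_{\alpha, i}$, then for given $A$ and partition use a single $\alpha$ with ${X}_{\alpha}$ promptly splitting $A$ and the block ${K}_{n}$ containing it, choosing for each $\sigma \in {2}^{n+1}$ some $k > n$ in $\left( {\bigcap}_{i < n + 1}{{x}^{\sigma(i)}_{\alpha, i}} \right) \cap A$. No gaps; the verification matches the paper's argument.
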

\begin{proof}
For each $\alpha < \kappa$, write ${X}_{\alpha} = \langle {x}_{\alpha, i}: i < \omega \rangle$.
 Define $c: \kappa \times \omega \times \omega \rightarrow 2$ as follows.
 For any $\alpha < \kappa$, $k, i \in \omega$,
 \begin{align*}
  c(\alpha, k, i) = \begin{cases}
                     0 &\ \text{if} \ k \in {x}_{\alpha, i} \\
                     1 &\ \text{if} \ k \notin {x}_{\alpha, i}.
                    \end{cases}
 \end{align*}
 We check that $c$ is a tortuous coloring.
 Let $A \in \cube$ and suppose $\seq{K}{n}{\in}{\omega}$ is a partition of $\kappa$.
 Suppose $\alpha < \kappa$ is such that ${X}_{\alpha}$ promptly splits $A$.
 Let $n \in \omega$ be such that $\alpha \in {K}_{n}$.
 We check that this $n$ has the required properties.
 Fix $\sigma \in {2}^{n + 1}$.
 As ${X}_{\alpha}$ promptly splits $A$, $\left( {\bigcap}_{i < n + 1}{{x}^{\sigma(i)}_{\alpha, i}} \right) \cap A$ is infinite.
 Choose $k \in \left( {\bigcap}_{i < n + 1}{{x}^{\sigma(i)}_{\alpha, i}} \right) \cap A$ with $k > n$.
 Now for any $i < n + 1$, $c(\alpha, k, i) = 0$ iff $k \in {x}_{\alpha, i}$ iff $\sigma(i) = 0$.
 This concludes the proof.
\end{proof}
By Lemma \ref{lem:tortuous}, there exists a $\kappa$ on which a tortuous coloring exists and the least such $\kappa$ is bounded above by $\spr$.
We next show that the least such $\kappa$ equals $\spr$.
First we show that the definition of a tortuous coloring implies the following self strengthening.
This is the strengthening we will use to prove the bound on ${\cov}^{\ast}({\ZZZ}_{0})$.
\begin{Lemma} \label{lem:strongtortuous}
 Let $\kappa$ be any cardinal and suppose that $c: \kappa \times \omega \times \omega \rightarrow 2$ is a tortuous coloring.
 Then for any $A \in \cube$ there exists $\alpha \in \kappa$ such that for each $n \in \omega$ and $\sigma \in {2}^{n + 1}$, $\existsinf k \in A \forall i < n + 1\[\sigma(i) = c(\alpha, k, i)\]$.
\end{Lemma}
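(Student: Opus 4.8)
The plan is to argue by contradiction, pitting the defining property of the tortuous coloring $c$ against one specific countable partition of $\kappa$ together with $A$ itself. Suppose the conclusion fails for some $A \in \cube$: then for every $\alpha < \kappa$ there are $n \in \omega$ and $\sigma \in 2^{n+1}$ such that $\{k \in A : \forall i < n+1\,(\sigma(i) = c(\alpha, k, i))\}$ is finite. For each $\alpha$ I would fix one such pair, call it $\langle n_\alpha, \sigma_\alpha \rangle$, put $F_\alpha = \{k \in A : \forall i < n_\alpha + 1\,(\sigma_\alpha(i) = c(\alpha, k, i))\}$, and let $m_\alpha = \max(F_\alpha \cup \{0\})$.

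Next I would partition $\kappa$ according to the value of the pair $\langle \sigma_\alpha, m_\alpha \rangle$. Since $\sigma_\alpha$ ranges over the countable set of nonempty finite binary strings and $m_\alpha$ over $\omega$, this splits $\kappa$ into countably many blocks. I would then re-index these blocks by $\omega$ so that the block consisting of those $\alpha$ with $\sigma_\alpha = \bar\sigma \in 2^{\bar n + 1}$ and $m_\alpha = \bar m$ is assigned some index $n \geq \max(\bar n, \bar m)$: there are only countably many nonempty blocks, so one may enumerate them and assign pairwise distinct indices in increasing order, each dominating the relevant bound, placing the empty set at every index not so used. Write $\langle K_n : n \in \omega\rangle$ for the resulting partition of $\kappa$.

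Finally I would check that $\langle K_n : n \in \omega\rangle$ and $A$ witness the failure of tortuousness. Fix $n \in \omega$. If $K_n = \emptyset$ then \eqref{def:tortuouscond} fails at $n$ vacuously. Otherwise $K_n$ is the block determined by some $\bar\sigma \in 2^{\bar n + 1}$ and $\bar m$ with $\bar n, \bar m \leq n$; let $\sigma \in 2^{n+1}$ be any extension of $\bar\sigma$. If some $\alpha \in K_n$ and some $k \in A$ with $k > n$ satisfied $\sigma(i) = c(\alpha, k, i)$ for all $i < n+1$, then restricting to $i < \bar n + 1$ gives $\sigma_\alpha(i) = \bar\sigma(i) = c(\alpha, k, i)$ there, so $k \in F_\alpha$ and hence $k \leq m_\alpha = \bar m \leq n$, contradicting $k > n$. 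Thus this $\sigma$ refutes \eqref{def:tortuouscond} at $n$. As $n$ was arbitrary, $\langle K_n : n \in \omega\rangle$ and $A$ show that $c$ is not tortuous, which is the desired contradiction.

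The one step that needs genuine care is the choice of partition in the second paragraph: it is not enough to split $\kappa$ by the ``type'' $\sigma_\alpha$ alone, because at an index $n$ the values $c(\alpha, k, i)$ must be controlled for all $i < n+1$, not merely for $i < \bar n + 1$. Splitting additionally by the height $m_\alpha = \max F_\alpha$, and arranging that each block's index dominates that height, is exactly what lets a single extension $\sigma$ of $\bar\sigma$ serve the whole block simultaneously; everything else is routine bookkeeping.
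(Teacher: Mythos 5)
Your proof is correct and follows essentially the same route as the paper's: argue by contradiction, attach to each $\alpha$ a witnessing string and a finite bound, and partition $\kappa$ by these pairs so that each block's index dominates both the string's length and the bound, which lets a single extension of the block's string to length $n+1$ refute condition (1) at $n$. The paper merely packages the re-indexing differently, by fixing in advance an enumeration $\langle\langle{\sigma}_{k},{m}_{k}\rangle:k\in\omega\rangle$ of ${2}^{<\omega}\times\omega$ with $\lc{\sigma}_{k}\rc\leq k$ and ${m}_{k}\leq k$; otherwise the two arguments coincide.
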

\begin{proof}
 First fix a 1-1 and onto enumeration, $\langle \langle {\sigma}_{k}, {m}_{k} \rangle: k \in \omega \rangle$, of the set ${2}^{< \omega} \times \omega$ such that for each $k \in \omega$, $\lc {\sigma}_{k} \rc \leq k$ and ${m}_{k} \leq k$.
 Now argue by contradiction as follows.
 Let $A \in \cube$ be given and suppose that for each $\alpha \in \kappa$, there exist ${n}_{\alpha} \in \omega$ and ${\sigma}_{\alpha} \in {2}^{{n}_{\alpha} + 1}$ such that 
 \begin{align*}
 \exists {k}_{\alpha} \in \omega \forall k \in A\[k \geq {k}_{\alpha} \implies \exists i < {n}_{\alpha} + 1 \[{\sigma}_{\alpha}(i) \neq c(\alpha, k, i)\]\].
 \end{align*}
 Let ${K}_{n} = \{\alpha \in \kappa: {\sigma}_{\alpha} = {\sigma}_{n} \wedge {k}_{\alpha} = {m}_{n}\}$, for each $n \in \omega$.
 Then $\seq{K}{n}{\in}{\omega}$ is a partition of $\kappa$.
 Applying the definition of a tortuous coloring to $A$ and $\seq{K}{n}{\in}{\omega}$, find $n \in \omega$ satisfying Condition (1) of Definition \ref{def:tortuous}.
 Note that ${\sigma}_{n} \in {2}^{< \omega}$ and that $\lc {\sigma}_{n} \rc \leq n$.
 So there exists $\sigma \in {2}^{n + 1}$ such that ${\sigma}_{n} \subset \sigma$.
 Now we can find $\alpha \in {K}_{n}$ and $k \in A$ such that $k > n$ and $\forall i < n + 1\[\sigma(i) = c(\alpha, k, i)\]$.
 Note that ${\sigma}_{\alpha} = {\sigma}_{n}$, ${k}_{\alpha} = {m}_{n}$, and that $\lc {\sigma}_{\alpha} \rc = {n}_{\alpha} + 1 = \lc {\sigma}_{n} \rc$.
 So ${n}_{\alpha} + 1 \leq n$ and for each $i < {n}_{\alpha} + 1$, ${\sigma}_{\alpha}(i) = \sigma(i)$.
 Moreover, ${k}_{\alpha} = {m}_{n} \leq n < k$.
 Thus for each $i < {n}_{\alpha} + 1$, ${\sigma}_{\alpha}(i) = \sigma(i) = c(\alpha, k, i)$.
 As $k \in A$ and $k > {k}_{\alpha}$, this contradicts the choice of ${k}_{\alpha}$.
 This contradiction concludes the proof.
\end{proof}
\begin{Lemma} \label{lem:prompttortuous}
 $\spr = \min\{\kappa: \text{there is a tortuous coloring on} \ \kappa\}$.
\end{Lemma}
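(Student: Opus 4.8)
The plan is to prove the equality by establishing the two inequalities separately, the first being an immediate consequence of the earlier lemmas and the second being the substance of the argument.

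For $\min\{\kappa: \text{there is a tortuous coloring on}\ \kappa\} \leq \spr$, I would simply use Lemma \ref{lem:promptomegas} to fix a promptly splitting family $\seq{X}{\alpha}{<}{\spr}$ of size $\spr$, and then feed this family into Lemma \ref{lem:tortuous} to obtain a tortuous coloring on $\spr$. This shows that a tortuous coloring exists on the cardinal $\spr$, so the least cardinal carrying one is at most $\spr$.

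For the reverse inequality it suffices to show that whenever $c \colon \kappa \times \omega \times \omega \to 2$ is a tortuous coloring there is a promptly splitting family of size at most $\kappa$, since applying this to the least such $\kappa$ gives $\spr \leq \min\{\kappa : \ldots\}$. The construction reads the coloring off slicewise: for each $\alpha < \kappa$ and $i \in \omega$ put ${x}_{\alpha, i} = \{k \in \omega : c(\alpha, k, i) = 0\}$, set ${X}_{\alpha} = \langle {x}_{\alpha, i} : i \in \omega\rangle$, and let $\F = \{{X}_{\alpha} : \alpha < \kappa\} \subset {\left( \Pset(\omega) \right)}^{\omega}$, which has size at most $\kappa$. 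To see that $\F$ is promptly splitting, fix $a \in \cube$ and apply Lemma \ref{lem:strongtortuous} to obtain $\alpha \in \kappa$ such that for every $n \in \omega$ and every $\sigma \in 2^{n+1}$ there are infinitely many $k \in a$ with $\sigma(i) = c(\alpha, k, i)$ for all $i < n+1$. The key observation is that, by the convention ${x}^{0} = x$ and ${x}^{1} = \omega \setminus x$, the condition ``$\sigma(i) = c(\alpha, k, i)$ for every $i < n+1$'' holds precisely when $k \in \bigcap_{i < n+1} {x}^{\sigma(i)}_{\alpha, i}$; hence $\left( \bigcap_{i < n+1} {x}^{\sigma(i)}_{\alpha, i} \right) \cap a$ is infinite for each $n$ and each $\sigma \in 2^{n+1}$, i.e.\ ${X}_{\alpha}$ promptly splits $a$.

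I do not anticipate a genuine obstacle: the second direction is a straightforward translation once Lemma \ref{lem:strongtortuous} is available, and the only point requiring any care is keeping the superscript convention on ${x}^{\sigma(i)}_{\alpha, i}$ synchronized with the values of $c$. The real work was already carried out in deriving the self-strengthening of Lemma \ref{lem:strongtortuous} from the bare definition of a tortuous coloring.
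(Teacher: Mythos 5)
Your proposal is correct and follows essentially the same route as the paper: the inequality $\min\{\kappa:\ldots\} \leq \spr$ via Lemmas \ref{lem:promptomegas} and \ref{lem:tortuous}, and the reverse inequality by reading the slices ${x}_{\alpha, i} = \{k : c(\alpha, k, i) = 0\}$ off a tortuous coloring and invoking Lemma \ref{lem:strongtortuous} to verify prompt splitting. The superscript bookkeeping you flag is exactly the only point of care in the paper's argument as well.
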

\begin{proof}
Let $\kappa$ be the minimal cardinal on which a tortuous coloring exists.
By Lemmas \ref{lem:promptomegas} and \ref{lem:tortuous}, $\kappa$ exists and is $\leq \spr$.
Let $c: \kappa \times \omega \times \omega \rightarrow 2$ be a tortuous coloring.
We will show that $\spr \leq \kappa$ by producing a promptly splitting family of size at most $\kappa$.
For each $\alpha < \kappa$ and $i < \omega$, define ${x}_{\alpha, i} = \{k \in \omega: c(\alpha, k, i) = 0\}$, and define ${X}_{\alpha} = \langle {x}_{\alpha, i}: i < \omega \rangle \in {\left( \Pset(\omega) \right)}^{\omega}$.
We claim that $\{{X}_{\alpha}: \alpha < \kappa\}$ is promptly splitting.
We will apply Lemma \ref{lem:strongtortuous}.
Fix $A \in \cube$.
Use Lemma \ref{lem:strongtortuous} to find $\alpha \in \kappa$ such that for each $n \in \omega$ and $\sigma \in {2}^{n + 1}$, $\existsinf k \in A \forall i < n + 1\[\sigma(i) = c(\alpha, k, i)\]$.
We claim ${X}_{\alpha}$ promptly splits $A$.
Indeed suppose $n \in \omega$ and $\sigma \in {2}^{n + 1}$.
Then for infinitely many $k \in A$, $\forall i < n + 1\[\sigma(i) = c(\alpha, k, i)\]$.
It is easy to see that each of these infinitely many $k \in A$ belong to $\left( {\bigcap}_{i < n + 1}{{x}^{\sigma(i)}_{\alpha, i}} \right) \cap A$, whence $\left( {\bigcap}_{i < n + 1}{{x}^{\sigma(i)}_{\alpha, i}} \right) \cap A$ is infinite. 
\end{proof}
Next, we show that a very mild guessing principle implies that $\s = \spr$.
The following definition introduces a parametrized version of the combinatorial principle usually denoted \hspace{0.5mm} \begin{rotate}{90} $\mathrel{\hspace*{-2pt}\multimapdot}$ \end{rotate} \hspace{-5pt} (read as ``\emph{stick}'').
This principle was introduced by Broverman et al.\@~\cite{stickdefinition}.
It is known to be strictly weaker than both $\clubsuit$ and $\CH$, but it is also easy to produce models where \hspace{0.5mm} \begin{rotate}{90} $\mathrel{\hspace*{-2pt}\multimapdot}$ \end{rotate} \hspace{-5pt} fails, the model obtained by adding ${\aleph}_{2}$-Cohen reals being an example (see \cite{stickdefinition} for details). 
\begin{Def} \label{def:stick}
  Let $\kappa$, $\lambda$, and $\theta$ be cardinals.
  Then \hspace{0.5mm} \begin{rotate}{90} $\mathrel{\hspace*{-2pt}\multimapdot}$ \end{rotate} \hspace{-5pt} $(\kappa, \lambda, \theta)$ is the following principle: there is a family $\C \subset {\[\kappa\]}^{{\aleph}_{0}}$ of size $\lambda$ such that for any $X \in {\[\kappa\]}^{\theta}$, there exists $A \in \C$ such that $A \subset X$.
\end{Def}
Note that \hspace{0.5mm} \begin{rotate}{90} $\mathrel{\hspace*{-2pt}\multimapdot}$ \end{rotate} \hspace{-6pt} $({\aleph}_{1}, {\aleph}_{1}, {\aleph}_{1})$ is the same as \hspace{0.5mm} \begin{rotate}{90} $\mathrel{\hspace*{-2pt}\multimapdot}$ \end{rotate}.
Several minimal instances of \hspace{0.5mm} \begin{rotate}{90} $\mathrel{\hspace*{-2pt}\multimapdot}$ \end{rotate} \hspace{-5pt} $({\aleph}_{1}, \lambda, {\aleph}_{1})$, \hspace{0.5mm} \begin{rotate}{90} $\mathrel{\hspace*{-2pt}\multimapdot}$ \end{rotate} \hspace{-6pt} $(\kappa, \kappa, {\aleph}_{1})$, and \hspace{0.5mm} \begin{rotate}{90} $\mathrel{\hspace*{-2pt}\multimapdot}$ \end{rotate} \hspace{-7pt} $(\kappa, \kappa, \kappa)$ were studied by Fuchino et al.\@ in \cite{sticksandclubs}.
\begin{Lemma} \label{lem:sticks}
  If \hspace{0.5mm} \begin{rotate}{90} $\mathrel{\hspace*{-2pt}\multimapdot}$ \end{rotate} \hspace{-7pt} $(\s, \s, \p)$ holds, then $\s = \spr$.
\end{Lemma}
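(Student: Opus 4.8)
The plan is to prove $\s\leq\spr$ and $\spr\leq\s$ separately; the first is a $\ZFC$ fact and the hypothesis is needed only for the second. For $\s\leq\spr$: given any promptly splitting family $\{X_{\alpha}:\alpha<\spr\}$ with $X_{\alpha}=\langle x_{\alpha,i}:i<\omega\rangle$, the family $\{x_{\alpha,0}:\alpha<\spr\}$ is splitting, since if $X_{\alpha}$ promptly splits $a$ then, taking $n=0$ in Definition~\ref{def:prompt}, both $x_{\alpha,0}\cap a$ and $(\omega\setminus x_{\alpha,0})\cap a$ are infinite; by Lemma~\ref{lem:promptomegas} this gives $\s\leq\spr$ (equivalently, use the first block of a splitting partition).

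For $\spr\leq\s$, I would build from a splitting family of size $\s$, using the stick-type hypothesis, a splitting family of partitions of size $\s$; by Definition~\ref{def:omegas} this suffices. First I preprocess: starting from any splitting family of size $\s$ and composing the indexing with a surjection of $\s$ onto itself all of whose fibres are cofinal in $\s$, I may assume the family is $\{s_{\xi}:\xi<\s\}$ with $\{s_{\xi}:\xi\geq\beta\}$ still splitting for every $\beta<\s$. I will also use the $\ZFC$ inequality $\p\leq\cf(\s)$ (if $\cf(\s)<\p$, write a splitting family as a union of $\cf(\s)$ pieces of size $<\s$; recursively choose $a_{i}$ inside a pseudo-intersection of the earlier $a_{j}$'s so that $a_{i}$ is not split by the $i$-th piece, possible since each piece has size $<\s$; a pseudo-intersection of the resulting $\subseteq^{\ast}$-decreasing family is split by the whole family, hence some $a_{i}$ is split by its own piece, a contradiction). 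Now fix $\C\subseteq[\s]^{\aleph_{0}}$ with $|\C|=\s$ witnessing the hypothesis. For $A\in\C$ with increasing enumeration $\langle\xi^{A}_{n}:n<\omega\rangle$, let $P_{A}$ be the partition of $\omega$ whose $n$-th block is $z^{A}_{n}=s_{\xi^{A}_{n}}\setminus\bigcup_{m<n}s_{\xi^{A}_{m}}$ for $n\geq 1$ and whose $0$-th block is $z^{A}_{0}=s_{\xi^{A}_{0}}\cup(\omega\setminus\bigcup_{n}s_{\xi^{A}_{n}})$. Unwinding the definitions, $P_{A}$ splits $a$ precisely when $a\cap s_{\xi^{A}_{n}}\setminus\bigcup_{m<n}s_{\xi^{A}_{m}}$ is infinite for every $n$.

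The heart of the proof is, for each $a\in\cube$, the construction of an $\s$-increasing set $W_{a}\in[\s]^{\p}$ such that for every finite increasing tuple $\xi_{0}<\cdots<\xi_{n}$ from $W_{a}$ the set $a\cap s_{\xi_{n}}\setminus(s_{\xi_{0}}\cup\cdots\cup s_{\xi_{n-1}})$ is infinite. Once this is done, $P_{A}$ splits $a$ for every countably infinite $A\subseteq W_{a}$, so applying the hypothesis to $W_{a}$ yields $A\in\C$ with $A\subseteq W_{a}$, and $\{P_{A}:A\in\C\}$ is the desired splitting family of partitions. I would build $W_{a}=\{\mu_{\gamma}:\gamma<\p\}$ by recursion on $\gamma$, maintaining the invariant that $a\setminus\bigcup F$ is infinite for every finite $F\subseteq\{s_{\mu_{\delta}}:\delta<\gamma\}$; this invariant is finitary, hence automatically preserved at limits. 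At stage $\gamma$ the sets $\{a\setminus\bigcup F:F\in[\{s_{\mu_{\delta}}:\delta<\gamma\}]^{<\omega}\}$ form a filter base of infinite sets of size $<\p$, so they have a pseudo-intersection $d\in\cube$; since $\{\mu_{\delta}:\delta<\gamma\}$ has size $<\p\leq\cf(\s)$ it is bounded in $\s$, and since the corresponding tail of $\{s_{\xi}\}$ is splitting I may pick $\mu_{\gamma}$ above that bound with $s_{\mu_{\gamma}}$ splitting $d$. That $s_{\mu_{\gamma}}\cap d$ is infinite gives the required "not almost covered'' condition for every tuple ending in $\mu_{\gamma}$ (here I use that $W_{a}$ is increasing, so $\mu_{\gamma}$ is the largest element of every tuple it belongs to), and that $d\setminus s_{\mu_{\gamma}}$ is infinite preserves the invariant.

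The main obstacle, and the only place the hypothesis really enters, is this recursion: $\p$ appears exactly because at limit-like stages the residual family $\{a\setminus\bigcup F\}$ is infinite and we need a pseudo-intersection of it, and because $\p\leq\cf(\s)$ is what keeps the index set bounded so that the tail-splitting property can be invoked. The delicate point is to realize that the inductive hypothesis can be kept finitary, so that limit stages cause no trouble, and to align the recursion order with the $\s$-order of $W_{a}$ so that each new $\mu_{\gamma}$ is maximal in every tuple containing it. (One could alternatively run the same construction and then apply the partition-to-prompt trick from the proof of Lemma~\ref{lem:promptomegas} to obtain a promptly splitting family of size $\s$ directly, but working with splitting partitions is cleanest.)
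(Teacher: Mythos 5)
Your proof is correct and follows essentially the same route as the paper: for each $a$ you run a recursion of length $\p$ using pseudo-intersections to produce a set of $\p$ many indices of the splitting family such that any countable subset yields (via successive set differences) a partition splitting $a$, and then apply the stick hypothesis to that set. The only real divergence is how distinctness of the chosen indices is secured: you reindex so that every tail of the splitting family is splitting and invoke the auxiliary fact $\p \leq \cf(\s)$, whereas the paper simply takes at each stage the least index whose set splits the current pseudo-intersection and proves these least indices are strictly increasing, thereby avoiding both of those extra steps.
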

\begin{proof}
Fix a splitting family $\seq{x}{\alpha}{<}{\s}$.
Let $\C \subset {\[\s\]}^{{\aleph}_{0}}$ be a family of size $\s$ with the property that for any $X \in {\[\s\]}^{\p}$, there exists $A \in \C$ such that $A \subset X$.
For each $A \in \C$ it is possible to choose ${B}_{A} \subset A$ whose order-type is $\omega$ because $A$ is an infinite set of ordinals.
Let $\langle {\beta}_{A, i}: i \in \omega \rangle$ be the enumeration of ${B}_{A}$ in increasing order.
Define ${y}_{A, 0} = {x}_{{\beta}_{A, 0}}$.
For each $0 < i < \omega$, define ${y}_{A, i} = {x}^{1}_{{\beta}_{A, 0}} \cap \dotsb \cap {x}^{1}_{{\beta}_{A, i - 1}} \cap {x}_{{\beta}_{A, i}}$.
Note that for each $i \in \omega$, ${y}_{A, i} \subset {x}_{{\beta}_{A, i}}$ and that if $j < i$, then ${y}_{A, i} \cap {x}_{{\beta}_{A, j}} = 0$.
So for any $j < i < \omega$, ${y}_{A, i} \cap {y}_{A, j} = 0$.
Now for $i \in \omega$, if $i \in {\bigcup}_{j \in \omega}{{y}_{A, j}}$, then put ${z}_{A, i} = {y}_{A, i}$, else put ${z}_{A, i} = {y}_{A, i} \cup \{ i \}$.
Thus it is clear that ${Z}_{A} = \langle {z}_{A, i}: i \in \omega \rangle$ is a partition of $\omega$.
Let $\F = \{{Z}_{A}: A \in \C\}$.
Then $\F$ is a family of partitions and $\lc \F \rc \leq \lc \C \rc = \s$.
We claim that it is a splitting family of partitions.
To this end fix $a \in \cube$.
We construct a set $X \in {\[\s\]}^{\p}$ as follows.
We will build sequences $\seq{\gamma}{\delta}{<}{\p}$, $\seq{c}{\delta}{<}{\p}$, and $\seq{b}{\delta}{<}{\p}$ such that the following conditions are satisfied for each $\delta < \p$:
\begin{enumerate}
  \item
  ${\gamma}_{\delta} < \s$ and $\forall \xi < \delta\[{\gamma}_{\xi} < {\gamma}_{\delta}\]$;
  \item
  ${c}_{\delta} \in {\[a\]}^{\omega}$ and $\forall \xi < \delta\[{c}_{\delta} \: {\subset}^{\ast} \: {b}_{\xi}\]$;
  \item
  ${b}_{\delta} = {c}_{\delta} \cap {x}^{1}_{{\gamma}_{\delta}}$ and ${\gamma}_{\delta}$ is the least $\alpha < \s$ such that ${x}_{\alpha}$ splits ${c}_{\delta}$.
\end{enumerate}
Suppose for a moment that such sequences can be constructed.
By (1) each ${\gamma}_{\delta} \in \s$ and ${\gamma}_{\xi} \neq {\gamma}_{\delta}$, whenever $\xi \neq \delta$.
Therefore $X = \{{\gamma}_{\delta}: \delta < \p\} \in {\[\s\]}^{\p}$.
Let $A \in \C$ be such that $A \subset X$.
We check that ${Z}_{A}$ splits $a$.
First ${\beta}_{A, 0} = {\gamma}_{\delta}$, for some $\delta < \p$.
Since ${x}_{{\gamma}_{\delta}}$ splits ${c}_{\delta}$ by clause (3), so ${x}^{0}_{{\gamma}_{\delta}} \cap a = {x}^{0}_{{\beta}_{A, 0}} \cap a = {x}_{{\beta}_{A, 0}} \cap a = {y}_{A, 0} \cap a$ is infinite.
Next fix $0 < i < \omega$.
Suppose $0 \leq j \leq i - 1$.
Then ${\beta}_{A, j} < {\beta}_{A, i}$, and so there are $\xi < \delta < \p$ with ${\beta}_{A, j} = {\gamma}_{\xi}$ and ${\beta}_{A, i} = {\gamma}_{\delta}$.
By clauses (2) and (3), ${c}_{\delta} \: {\subset}^{\ast} \: {b}_{\xi} \subset {x}^{1}_{{\gamma}_{\xi}} = {x}^{1}_{{\beta}_{A, j}}$.
Therefore ${c}_{\delta} \: {\subset}^{\ast} \: a \cap {x}^{1}_{{\beta}_{A, 0}} \cap \dotsb \cap {x}^{1}_{{\beta}_{A, i - 1}}$.
Since ${x}_{{\gamma}_{\delta}}$ splits ${c}_{\delta}$, we have that $a \cap {x}^{1}_{{\beta}_{A, 0}} \cap \dotsb \cap {x}^{1}_{{\beta}_{A, i - 1}} \cap {x}^{0}_{{\gamma}_{\delta}} = a \cap {x}^{1}_{{\beta}_{A, 0}} \cap \dotsb \cap {x}^{1}_{{\beta}_{A, i - 1}} \cap {x}_{{\beta}_{A, i}} = a \cap {y}_{A, i}$ is infinite.
Thus we have shown that $\forall i \in \omega\[\lc a \cap {y}_{A, i} \rc = {\aleph}_{0}\]$, which implies that $\forall i \in \omega\[\lc a \cap {z}_{A, i} \rc = {\aleph}_{0}\]$ because $\forall i \in \omega\[{y}_{A, i} \subset {z}_{A, i}\]$.
Hence ${Z}_{A}$ splits $a$, as claimed.

To complete the proof, we show how to construct the sequences satisfying (1)--(3) by induction on $\delta < \p$.
Fix $\delta < \p$ and assume that $\seq{\gamma}{\xi}{<}{\delta}$, $\seq{c}{\xi}{<}{\delta}$, and $\seq{b}{\xi}{<}{\delta}$ satisfying (1)--(3) are given.
Consider any $\zeta < \xi < \delta$.
By clauses (2) and (3), ${b}_{\xi} \subset {c}_{\xi} \: {\subset}^{\ast} \: {b}_{\zeta}$.
So the sequence $\seq{b}{\xi}{<}{\delta}$ is ${\subset}^{\ast}$--descending.
Also for each $\xi < \delta$, ${b}_{\xi} \in {\[a\]}^{\omega}$ because ${x}_{{\gamma}_{\xi}}$ splits ${c}_{\xi}$ and ${b}_{\xi} \subset {c}_{\xi} \subset a$.
Since $\delta < \p$ we can find ${c}_{\delta} \in {\[a\]}^{\omega}$ so that $\forall \xi < \delta\[{c}_{\delta} \: {\subset}^{\ast} \: {b}_{\xi} \]$.
Thus clause (2) is satisfied.
Let ${\gamma}_{\delta}$ be the least $\alpha < \s$ such that ${x}_{\alpha}$ splits ${c}_{\delta}$ and define ${b}_{\delta} =  {x}^{1}_{{\gamma}_{\delta}} \cap {c}_{\delta}$.
Then ${\gamma}_{\delta} < \s$ and clause (3) holds by definition.
So it only remains to check that $\forall \xi < \delta\[{\gamma}_{\xi} < {\gamma}_{\delta}\]$.
Fix $\xi < \delta$ and assume for a contradiction that ${\gamma}_{\delta} \leq {\gamma}_{\xi}$.
Note that ${x}_{{\gamma}_{\delta}}$ splits ${c}_{\xi}$ because ${c}_{\delta} \: {\subset}^{\ast} \: {c}_{\xi}$.
It follows that ${\gamma}_{\xi} \leq {\gamma}_{\delta}$, whence ${\gamma}_{\xi} = {\gamma}_{\delta}$.
However it now follows that ${x}_{{\gamma}_{\xi}}$ splits ${x}^{1}_{{\gamma}_{\xi}}$ because ${c}_{\delta} \: {\subset}^{\ast} \: {b}_{\xi} \subset {x}^{1}_{{\gamma}_{\xi}}$, which is absurd.
This contradiction completes the inductive construction.

Thus $\F$ is a splitting family of partitions.
Since $\lc \F \rc \leq \s$, $\spr \leq \s$, and since $\s \leq \spr$ trivially holds, we conclude that $\s = \spr$.
\end{proof}
We will conclude this section by establishing yet another point of similarity between $\s$ and $\spr$.
We will show that a Suslin c.c.c.\@ forcing cannot increase $\spr$.
This should be compared with the well-known result of Judah and Shelah~\cite{suslinccc} that a Suslin c.c.c.\@ forcing cannot increase $\s$ (see also \cite{BJ}).
Recall the following definitions.
\begin{Def} \label{def:suslinccc}
  A forcing notion $\langle \P, {\leq}_{\P}, {\mathbbm{1}}_{\P}, {\perp}_{\P} \rangle$ is \emph{Suslin c.c.c.\@} if it has the countable chain condition and there exist analytic sets ${R}_{0} \subset \BS$, and ${R}_{1}, {R}_{2} \subset \BS \times \BS$ such that
  \begin{enumerate}
    \item
    $\P = {R}_{0}$;
    \item
    ${\leq}_{\P} = \{\pr{q}{p} \in \P \times \P: q \: {\leq}_{\P} \: p\} = {R}_{1}$;
    \item
    ${\perp}_{\P} = \{\pr{p}{q} \in \P \times \P: \neg\exists r \in \P \[r \: {\leq}_{\P} \: p \wedge r \: {\leq}_{\P} \: q\]\} = {R}_{2}$.
  \end{enumerate}
\end{Def}
Analytic sets are represented as projections of trees.
For any set $A$, if $T \subset {A}^{< \omega}$ is a tree, then $\[T\]$ denotes the set of all branches through $T$, that is $\[T\] = \{f \in {A}^{\omega}: \forall n \in \omega\[f \restrict n \in T\]\}$.
Following standard convention, given a tree $T \subset {(\omega \times \omega)}^{< \omega}$ and $\sigma, \tau \in {\omega}^{n}$ for some $n \in \omega$, we will abuse notation and write $\pr{\sigma}{\tau} \in T$ when what we mean is $\langle \pr{\sigma(i)}{\tau(i)} : i < n \rangle \in T$.
In a related abuse of notation, we write $\pr{f}{g} \in \[T\]$ for some $f, g \in \BS$ when what we mean is $\langle \pr{f(i)}{g(i)}: i \in \omega \rangle \in \[T\]$.
Similar notational conventions apply to subtrees of ${\left( \omega \times \omega \times \omega \right)}^{< \omega}$.
The reader may consult Kechris~\cite{kechrisbook} for further details about representing analytic sets as projections of trees.

For the remainder of this section, fix a Suslin c.c.c.\@ poset $\langle \P, {\leq}_{\P}, {\mathbbm{1}}_{\P}, {\perp}_{\P} \rangle$.
Fix also trees ${T}_{0} \subset {(\omega \times \omega)}^{< \omega}$ and ${T}_{1}, {T}_{2} \subset {\left( \omega \times \omega \times \omega \right)}^{< \omega}$ such that 
\begin{align*}
&\forall p\[p \in \P \iff \exists g \in \BS\[\pr{p}{g} \in \[{T}_{0}\]\]\],\\ 
&\forall p \forall q\[q \: {\leq}_{\P} \: p \iff \exists g \in \BS\[\langle q, p, g \rangle \in \[{T}_{1}\]\]\]\text{, and}\\ &\forall p \forall q\[p \: {\perp}_{\P} \: q \iff \exists g \in \BS \[\langle p, q, g \rangle \in \[{T}_{2}\]\]\].
\end{align*}
\begin{Def} \label{def:Nname}
  Let $\mathring{A}$ be a $\P$-name.
  Suppose that ${\forces}_{\P}{\mathring{A} \in \cube}$.
  Choose a sequence $\AAA = \langle {p}_{m, n}: \pr{m}{n} \in \omega \times \omega \rangle$ and a function $\F: \omega \times \omega \rightarrow 2$ such that:
  \begin{enumerate}
    \item
    for each $n \in \omega$, $\{{p}_{m, n}: m \in \omega\} \subset \P$ is a maximal antichain in $\P$;
    \item
    for each $n, m \in \omega$, ${p}_{m, n} {\forces}_{\P} \: n \in \mathring{A}$ if and only if $\F(m, n) = 1$, while ${p}_{m, n} {\forces}_{\P} \: n \notin \mathring{A}$ if and only if $\F(m, n) = 0$. 
  \end{enumerate}
Define $\mathring{N}(\mathring{A}, \AAA, \F) = \left\{ \pr{\check{n}}{{p}_{m, n}}: n, m \in \omega \wedge \F(m, n) = 1 \right\}$.
Note that $\mathring{N}(\mathring{A}, \AAA, \F)$ is a $\P$-name and that ${\forces}_{\P}\:{\mathring{A} = \mathring{N}(\mathring{A}, \AAA, \F)}$.

Suppose $\mathbf{W}$ is a forcing extension of the universe $\V$.
Then ${\P}^{\mathbf{W}}$, ${\leq}^{\mathbf{W}}_{\P}$, and ${\perp}^{\mathbf{W}}_{\P}$ will denote the reinterpretations in $\mathbf{W}$ of $\P$, ${\leq}_{\P}$, and ${\perp}_{\P}$ respectively.
\end{Def}
It is well-known that $\left\langle {\P}^{\mathbf{W}}, {\leq}^{\mathbf{W}}_{\P}, {\mathbbm{1}}_{\P} , {\perp}^{\mathbf{W}}_{\P} \right\rangle$ is a c.c.c.\@ forcing notion in $\mathbf{W}$ with $\P \subset {\P}^{\mathbf{W}}$, and also that for each $n \in \omega$, $\{{p}_{m ,n}: m \in \omega\} \subset {\P}^{\mathbf{W}}$ is a maximal antichain in ${\P}^{\mathbf{W}}$.
The reader may consult either \cite{suslinccc} or \cite{BJ} for further details.
Note that $\mathring{N}(\mathring{A}, \AAA, \F)$ is a ${\P}^{\mathbf{W}}$-name and that if $H$ is $(\mathbf{W}, {\P}^{\mathbf{W}})$-generic, then $\mathring{N}(\mathring{A}, \AAA, \F)[H] =\{n: n \in \omega \wedge \exists m \in \omega\[\F(m, n) = 1 \wedge {p}_{m, n} \in H \]\}$.
Thus ${\forces}_{{\P}^{\mathbf{W}}}\:{\mathring{N}(\mathring{A}, \AAA, \F) \subset \omega}$ holds in $\mathbf{W}$.
\begin{Lemma} \label{lem:Ninfinite}
  In $\mathbf{W}$, ${\forces}_{{\P}^{\mathbf{W}}}\:{\mathring{N}(\mathring{A}, \AAA, \F) \in \cube}$.
\end{Lemma}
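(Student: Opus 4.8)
Write $\mathring M$ for the name $\mathring{N}(\mathring{A}, \AAA, \F)$ throughout. The plan is to adapt the Judah--Shelah technique for preservation under Suslin c.c.c.\@ forcing: I will reduce the assertion ``no condition forces $\mathring M$ to be bounded by a fixed natural number'' to a $\Sigma^{1}_{1}$ statement $\psi$ whose parameters are the reals coding $\P$, $\perp_{\P}$, $\AAA$, and $\F$ (all lying in $\V$); then I will note that $\psi$ is false in $\V$ because ${\forces}_{\P} \mathring M = \mathring A \in \cube$, invoke Shoenfield absoluteness to conclude $\psi$ is false in $\mathbf{W}$, and finally translate the failure of $\psi$ in $\mathbf{W}$ back into the desired statement about forcing over $\P^{\mathbf{W}}$.

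The first ingredient is a routine characterization, valid in any universe in which $\P$ (respectively $\P^{\mathbf{W}}$) is a forcing notion for which every $\{p_{m, n}: m \in \omega\}$ is a maximal antichain: for a condition $q$ and $N \in \omega$, one has $q \forces \mathring M \subseteq \check N$ if and only if for all $n \geq N$ and all $m$, $\F(m, n) = 1$ implies $p_{m, n} \perp q$. Both directions are immediate from the description $\mathring M[H] = \{n: \exists m\,(\F(m, n) = 1 \wedge p_{m, n} \in H)\}$ recorded before the lemma: if some $p_{m, n}$ with $\F(m, n) = 1$ and $n \geq N$ were compatible with $q$, a common extension would force $n \in \mathring M$; conversely, if no such $p_{m, n}$ is compatible with $q$, then none of them lies in a generic filter containing $q$, so $\mathring M[H] \subseteq N$ whenever $q \in H$.

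Next let $\psi$ be the statement
\begin{equation*}
 \exists q\Bigl(q \in \P \ \wedge\ \exists N \in \omega\ \forall n \geq N\ \forall m\ \bigl[\F(m, n) = 1 \longrightarrow p_{m, n} \perp_{\P} q\bigr]\Bigr),
\end{equation*}
where $\P$ and $\perp_{\P}$ are read off the trees $T_{0}$ and $T_{2}$, so that $q \in \P$ unfolds as $\exists g\,(\pr{q}{g} \in [T_{0}])$ and $p_{m, n} \perp_{\P} q$ unfolds as $\exists g\,(\langle p_{m, n}, q, g\rangle \in [T_{2}])$. I would check that $\psi$ is $\Sigma^{1}_{1}$: a countable conjunction, indexed by those pairs $\pr{m}{n}$ with $n \geq N$ and $\F(m, n) = 1$, of the closed conditions $\langle p_{m, n}, q, g_{m, n}\rangle \in [T_{2}]$ is $\Sigma^{1}_{1}$ once the witnessing sequence $\langle g_{m, n}\rangle$ is pulled out front as a single real quantifier, and prepending the number quantifier $\exists N$ and the real quantifier $\exists q$ together with the $\Sigma^{1}_{1}$ side condition $q \in \P$ leaves $\psi$ in $\Sigma^{1}_{1}$, with real parameters $T_{0}$, $T_{2}$, $\AAA$, $\F$, all in $\V$. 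In $\V$, since ${\forces}_{\P} \mathring M = \mathring A \in \cube$, no condition of $\P$ forces $\mathring M \subseteq \check N$ for any $N \in \omega$; by the characterization above, $\psi$ is false in $\V$. As $\neg\psi$ is $\Pi^{1}_{1}$ and $\mathbf{W}$ is a forcing extension of $\V$, Shoenfield absoluteness gives that $\psi$ is false in $\mathbf{W}$ as well.

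Finally I argue in $\mathbf{W}$. By the standard facts about Suslin c.c.c.\@ posets recalled before the lemma, $\P^{\mathbf{W}}$ is c.c.c., every $\{p_{m, n}: m \in \omega\}$ remains a maximal antichain in $\P^{\mathbf{W}}$, and $\perp^{\mathbf{W}}_{\P}$ is the genuine incompatibility relation of $\P^{\mathbf{W}}$; so $\psi$, evaluated in $\mathbf{W}$, asserts precisely that some $q \in \P^{\mathbf{W}}$ forces (over $\P^{\mathbf{W}}$) that $\mathring M \subseteq \check N$ for some $N \in \omega$. Since $\psi$ fails in $\mathbf{W}$, no condition of $\P^{\mathbf{W}}$ forces $\mathring M$ to be bounded by a fixed natural number. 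But if it were not forced that $\mathring M$ is infinite, some $q \in \P^{\mathbf{W}}$ would force $\exists N\,(\mathring M \subseteq \check N)$, and an extension of $q$ deciding the value of $N$ would force $\mathring M \subseteq \check N$ for a specific $N \in \omega$, contradicting the previous sentence. As ${\forces}_{\P^{\mathbf{W}}} \mathring M \subseteq \omega$ is already known, this yields ${\forces}_{\P^{\mathbf{W}}} \mathring M \in \cube$, which is the assertion of the lemma. The genuinely delicate points are the verification that $\psi$ is $\Sigma^{1}_{1}$ (countable conjunctions of analytic clauses remaining analytic) and the appeal to the preservation theory of Suslin c.c.c.\@ forcings, in particular that the reinterpreted incompatibility relation is the true one in $\mathbf{W}$; everything else is routine forcing manipulation.
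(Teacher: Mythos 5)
Your proof is correct and is essentially the paper's argument in contrapositive form: the paper transfers from $\V$ to $\mathbf{W}$ the true ${\mathbf{\Pi}}^{1}_{1}$ statement ``for every $p \in \P$ and every $l$ there are $n > l$ and $m$ with $\F(m,n) = 1$ and $p$ compatible with ${p}_{m,n}$'' (obtained by quantifying universally over candidate incompatibility witnesses), which is exactly the negation of your ${\mathbf{\Sigma}}^{1}_{1}$ sentence $\psi$, and then unwinds it in $\mathbf{W}$ using the same standard facts about the reinterpreted order and incompatibility relations. The remaining differences are cosmetic --- density of conditions forcing large elements into $\mathring{N}(\mathring{A}, \AAA, \F)$ versus ``no condition forces boundedness'' --- so your argument matches the paper's.
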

\begin{proof}
  Write $\mathring{N}$ for $\mathring{N}(\mathring{A}, \AAA, \F)$.
  We have remarked above that $\mathring{N}$ is a ${\P}^{\mathbf{W}}$-name and that ${\forces}_{{\P}^{\mathbf{W}}}\:{\mathring{N} \subset \omega}$ holds in $\mathbf{W}$.
  Now in $\mathbf{V}$, we have that for each $p \in \P$ and $l \in \omega$, there exist $n, m \in \omega$ so that $n > l$, $\F(m, n) = 1$, and $p \: {\not\perp}_{\P} \: {p}_{m, n}$, which can be rephrased as
  \begin{align*}
    &\forall p, g \in \BS \forall \langle {g}_{m, n}: \pr{m}{n} \in \omega \times \omega \rangle \in {\left( \BS \right)}^{\omega \times \omega}\\
    &\forall l \in \omega \exists n, m \in \omega\[\pr{p}{g} \in \[{T}_{0}\] \implies \left( n > l \wedge \F(m, n) = 1 \wedge \langle p, {p}_{m, n}, {g}_{m, n} \rangle \notin \[{T}_{2}\] \right) \].
  \end{align*}
  This statement is ${\mathbf{\Pi}}^{1}_{1}$, and so it holds in $\mathbf{W}$.
  Now in $\mathbf{W}$, suppose that $p \in {\P}^{\mathbf{W}}$ and that $l \in \omega$.
  Then we can find $n, m \in \omega$ and $q \in {\P}^{\mathbf{W}}$ so that $n > l$, $\F(m, n) = 1$, and $q \: {\leq}^{\mathbf{W}}_{\P} \: p, {p}_{m, n}$.
  Hence $\pr{\check{n}}{{p}_{m, n}} \in \mathring{N}$ and so $q \: {\forces}_{{\P}^{\mathbf{W}}} \: {n \in \mathring{N}}$.
  Thus we have shown that $\forall p \in {\P}^{\mathbf{W}} \forall l \in \omega \exists n > l \exists q {\leq}^{\mathbf{W}}_{\P} p\[q \: {\forces}_{{\P}^{\mathbf{W}}} \: {n \in \mathring{N}}\]$, which implies that ${\forces}_{{\P}^{\mathbf{W}}}\:{\mathring{N} \ \text{is infinite.}}$
\end{proof}
\begin{Lemma} \label{lem:Nunsplit}
  Suppose $p \in \P$ and that $p \: {\forces}_{\P} \: {\mathring{A} \ \text{is not promptly split by} \ \V \cap {\left( \Pset(\omega) \right)}^{\omega}}$ holds in $\V$.
  Then in $\mathbf{W}$, $p \: {\forces}_{{\P}^{\mathbf{W}}} \: {\mathring{N}(\mathring{A}, \AAA, \F) \ \text{is not promptly split by} \ \mathbf{W} \cap {\left( \Pset(\omega) \right)}^{\omega}}$.
\end{Lemma}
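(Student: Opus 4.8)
The plan is to follow Judah and Shelah's template for showing that Suslin c.c.c.\@ forcing does not increase $\s$: rewrite the hypothesis as a projective statement whose only parameters are the trees ${T}_{0}, {T}_{1}, {T}_{2}$ and the reals $p, \AAA, \F$, all of which lie in $\V$, and then transport it to $\WW$ by Shoenfield absoluteness. Write $\mathring{N}$ for $\mathring{N}(\mathring{A}, \AAA, \F)$. The one piece of bookkeeping, valid in $\V$ and in every c.c.c.\@ reinterpretation of $\P$ because $\{{p}_{m,k}: m \in \omega\}$ stays a maximal antichain, is this: for a condition $r$ and $k \in \omega$, ``$r \forces k \notin \mathring{N}$'' holds iff $r$ is incompatible with ${p}_{m,k}$ for every $m$ with $\F(m,k) = 1$; hence, for $X = \seq{x}{i}{\in}{\omega}$, $n \in \omega$, $\sigma \in {2}^{n+1}$ and $l \in \omega$, ``$r \forces \left( {\bigcap}_{i < n+1}{{x}^{\sigma(i)}_{i}} \right) \cap \mathring{N} \subseteq l$'' holds iff for every $k \geq l$ lying in ${\bigcap}_{i < n+1}{{x}^{\sigma(i)}_{i}}$ and every $m$ with $\F(m,k) = 1$ one has $r \perp {p}_{m,k}$. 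Let $\psi(X, q)$ abbreviate the statement that there exist $r \leq q$, $n \in \omega$, $\sigma \in {2}^{n+1}$ and $l \in \omega$ such that for every such $k$ and $m$, $r \perp {p}_{m,k}$.

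Next I would check that, in $\V$, the hypothesis ``$p \forces_{\P} \mathring{A}$ is not promptly split by $\V \cap {(\Pset(\omega))}^{\omega}$'' is equivalent to ``$\forall X \, \forall q \, [q \leq_{\P} p \implies \psi(X, q)]$''. By Definition \ref{def:prompt}, $X$ fails to promptly split a set exactly when ${\bigcap}_{i < n+1}{{x}^{\sigma(i)}_{i}}$ meets it finitely for some $n$ and $\sigma \in {2}^{n+1}$. Since ${\forces}_{\P} \mathring{A} = \mathring{N}$, the forcing theorem rephrases the hypothesis as: for every $X \in \V \cap {(\Pset(\omega))}^{\omega}$ and every $q \leq_{\P} p$ there are $r \leq_{\P} q$, $n$, $\sigma$ with $r \forces_{\P} \left( {\bigcap}_{i < n+1}{{x}^{\sigma(i)}_{i}} \right) \cap \mathring{N}$ finite; as $r$ forces that intersection finite, it has an extension forcing it $\subseteq l$ for a concrete $l$, and the characterization of the first paragraph then rewrites this as $\psi(X, q)$. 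Conversely, $\psi(X, q)$ for all $q \leq_{\P} p$ gives, densely below $p$, that $X$ does not promptly split $\mathring{N}$. Since in $\V$ ``$\forall X \in {(\Pset(\omega))}^{\omega}$'' is literally ``$\forall X \in \V \cap {(\Pset(\omega))}^{\omega}$'', this is the asserted equivalence.

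Now the reformulated statement is $\mathbf{\Pi}^{1}_{2}$ with parameters in $\V$: using ${T}_{1}$ and ${T}_{2}$, the clauses ``$r \leq_{\P} q$'' and ``$r \perp_{\P} {p}_{m,k}$'' are $\mathbf{\Sigma}^{1}_{1}$ and occur positively in $\psi(X, q)$, and the bounded-hypothesis quantifiers $\forall k \, \forall m$ inside $\psi$ can be absorbed into one further real quantifier (coding a sequence of witnesses); so $\psi(X, q)$ is $\mathbf{\Sigma}^{1}_{1}$, whence ``$\forall X \, \forall q \, [q \leq_{\P} p \implies \psi(X, q)]$'' is $\mathbf{\Pi}^{1}_{2}$ with parameters among $p, \AAA, \F, {T}_{0}, {T}_{1}, {T}_{2}$. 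By Shoenfield absoluteness it holds in $\WW$, with $\leq_{\P}$ and $\perp_{\P}$ now read as the reinterpretations ${\leq}^{\WW}_{\P}$ and ${\perp}^{\WW}_{\P}$, which are exactly the relations ${T}_{1}$, ${T}_{2}$ define in $\WW$. To finish, work in $\WW$: by Lemma \ref{lem:Ninfinite}, ${\forces}_{{\P}^{\WW}} \mathring{N} \in \cube$, so ``promptly split'' is meaningful for $\mathring{N}$. Given any $X \in \WW \cap {(\Pset(\omega))}^{\omega}$ and any $q \leq^{\WW}_{\P} p$, apply the reflected statement to $X$ and $q$ to obtain $r \leq^{\WW}_{\P} q$, $n$, $\sigma \in {2}^{n+1}$ and $l$ witnessing $\psi(X, q)$; by the first paragraph applied to ${\P}^{\WW}$, this $r$ forces $\left( {\bigcap}_{i < n+1}{{x}^{\sigma(i)}_{i}} \right) \cap \mathring{N} \subseteq l$ over ${\P}^{\WW}$. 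Since such an $r$ exists below every $q \leq^{\WW}_{\P} p$ and for every $X \in \WW \cap {(\Pset(\omega))}^{\omega}$, we conclude $p \forces_{{\P}^{\WW}} \mathring{N}(\mathring{A}, \AAA, \F)$ is not promptly split by $\WW \cap {(\Pset(\omega))}^{\omega}$.

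The main obstacle is the complexity computation of the third paragraph: one must verify that the prima facie set-theoretic hypothesis genuinely reduces to a $\mathbf{\Pi}^{1}_{2}$ fact about the trees and reals, with no concealed unbounded quantifier, and that the reinterpreted relations ${\leq}^{\WW}_{\P}$, ${\perp}^{\WW}_{\P}$ fit into the $\mathbf{\Sigma}^{1}_{1}$ matrix exactly as Shoenfield absoluteness requires. This is where Suslinness, and not merely the countable chain condition, is used, together with the standard facts recalled just before Lemma \ref{lem:Ninfinite}, namely that both the c.c.c.\@ and the maximal antichains $\{{p}_{m,n}: m \in \omega\}$ persist in $\WW$.
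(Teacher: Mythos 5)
Your proposal is correct and follows essentially the same route as the paper's proof: both translate the hypothesis, via the definition of $\mathring{N}(\mathring{A}, \AAA, \F)$, into a statement about incompatibility with the conditions ${p}_{m,n}$, observe (using ${T}_{1}$, ${T}_{2}$ and coding the witnessing reals into a single sequence) that this is $\mathbf{\Pi}^{1}_{2}$ with parameters in $\V$, apply Shoenfield absoluteness, and then read the transferred statement back in $\mathbf{W}$ as the assertion that densely below $p$ some $\left( {\bigcap}_{i < k + 1}{{x}^{\sigma(i)}_{i}} \right) \cap \mathring{N}$ is forced to be bounded. The only cosmetic difference is that you isolate the incompatibility characterization of ``$r$ forces the intersection $\subseteq l$'' as a reusable biconditional, where the paper reproves the relevant direction in $\mathbf{W}$ by a direct generic-filter argument; the content is the same.
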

\begin{proof}
Write $\mathring{N}$ for $\mathring{N}(\mathring{A}, \AAA, \F)$.
In $\V$, we have that for each $\bar{p} \: {\leq}_{\P} \: p$ and for each $\seq{x}{i}{\in}{\omega} \in {\left( \Pset(\omega) \right)}^{\omega}$, there exist $q \: {\leq}_{\P} \: \bar{p}$, $k \in \omega$, $\sigma \in {2}^{k + 1}$, $l \in \omega$ such that for each $n, m \in \omega$, if $n \geq l$, $\F(m, n) = 1$, and $n \in {\bigcap}_{i < k + 1}{{x}^{\sigma(i)}_{i}}$, then $q \: {\perp}_{\P} \: {p}_{m, n}$.
This can be rephrased as
\begin{equation*}
 \begin{split}
  &\forall \bar{p}, \bar{g} \in \BS \forall \seq{x}{i}{\in}{\omega} \in {\left( \Pset(\omega) \right)}^{\omega} \exists q, g \in \BS \exists \langle {g}_{m, n}: \pr{m}{n} \in \omega \times \omega \rangle \in {\left( \BS \right)}^{\omega \times \omega}\\
  &\exists k \in \omega \exists \sigma \in {2}^{k + 1} \exists l \in \omega \forall n, m \in \omega\[\langle \bar{p}, p, \bar{g} \rangle \in \[{T}_{1}\] \implies \left(\vphantom{n \in {\bigcap}_{i < k + 1}{{x}^{\sigma(i)}_{i}}}\langle q, \bar{p}, g \rangle \in \[{T}_{1}\] \right.\right. \\
  &\left.\left. \wedge \; \left( \left( n \geq l \wedge \F(m, n) = 1 \wedge n \in {\bigcap}_{i < k + 1}{{x}^{\sigma(i)}_{i}} \right) \implies \langle q, {p}_{m, n}, {g}_{m, n} \rangle \in \[{T}_{2}\] \right) \right)\].
 \end{split} 
\end{equation*}
This is ${\mathbf{\Pi}}^{1}_{2}$.
So by Shoenfield's absoluteness, it continues to holds in $\mathbf{W}$.
Now working in $\mathbf{W}$, fix any $\bar{p} \: {\leq}^{\mathbf{W}}_{\P} \: p$ and $\seq{x}{i}{\in}{\omega} \in {\left( \Pset(\omega) \right)}^{\omega}$.
We know that there are $q \: {\leq}^{\mathbf{W}}_{\P} \: \bar{p}$, $k \in \omega$, $\sigma \in {2}^{k + 1}$, and $l \in \omega$ with the property that for all $n, m \in \omega$, if $n \geq l$, $\F(m, n) = 1$, and $n \in {\bigcap}_{i < k + 1}{{x}^{\sigma(i)}_{i}}$, then $q \: {\perp}^{\mathbf{W}}_{\P} \: {p}_{m, n}$.
We claim that $q \: {\forces}_{{\P}^{\mathbf{W}}} \: {\left( {\bigcap}_{i < k + 1}{{x}^{\sigma(i)}_{i}}  \right) \cap \mathring{N} \subset l}$.
Suppose not.
Then let $H$ be a $(\mathbf{W}, {\P}^{\mathbf{W}})$-generic filter such that $q \in H$ and in $\mathbf{W}\[H\]$, there exists an $n \in \left( {\bigcap}_{i < k + 1}{{x}^{\sigma(i)}_{i}}  \right) \cap \mathring{N}\[H\]$ with $n \notin l$.
By the definition of $\mathring{N}$, $n \in \omega$ and there exists $m \in \omega$ such that $\F(m, n) = 1$ and ${p}_{m, n} \in H$.
However $q \: {\not\perp}^{\mathbf{W}}_{\P} \: {p}_{m, n}$ because they both belong to $H$, which contradicts the choice of $q$ and the fact that $n \geq l$.
This contradiction proves that $q \: {\forces}_{{\P}^{\mathbf{W}}} \: {\left( {\bigcap}_{i < k + 1}{{x}^{\sigma(i)}_{i}}  \right) \cap \mathring{N} \subset l}$ holds in $\mathbf{W}$, which proves that $p \: {\forces}_{{\P}^{\mathbf{W}}} \: {\mathring{N} \ \text{is not promptly split by} \ \mathbf{W} \cap {\left( \Pset(\omega) \right)}^{\omega}}$.
\end{proof}
Recall that if $\langle \RR, {\leq}_{\RR}, {\mathbbm{1}}_{\RR} \rangle$ and $\langle \Ss, {\leq}_{\Ss}, {\mathbbm{1}}_{\Ss} \rangle$ are posets and if $\pi: \RR \rightarrow \Ss$ is a complete embedding, then for any $(\V, \Ss)$-generic filter $H$, ${\pi}^{-1}(H)$ is $(\V, \RR)$-generic.
We can recursively define a map from the $\RR$-names to the $\Ss$-names using $\pi$.
Abusing notation, this map shall also be denoted by $\pi$.
For an $\RR$-name $\mathring{a}$, $\pi(\mathring{a}) = \left\{ \pr{\pi(\mathring{x})}{\pi(p)}: \pr{\mathring{x}}{p} \in \mathring{a} \right\}$.
If $H$ is a $(\V, \Ss)$-generic filter, then for any $\RR$-name, $\mathring{a}$, $\mathring{a}\[{\pi}^{-1}(H)\] = \pi(\mathring{a})\[H\]$, and if $x \in \V$, then $\pi(\check{x}) = \check{x}$, where of course the first ``$\check{x}$'' is with respect to $\langle \RR, {\leq}_{\RR}, {\mathbbm{1}}_{\RR} \rangle$ and the second $\check{x}$ is with respect to $\langle \Ss, {\leq}_{\Ss}, {\mathbbm{1}}_{\Ss} \rangle$.

In the specific case when $\langle \RR, {\leq}_{\RR}, {\mathbbm{1}}_{\RR} \rangle = \langle \Ss, {\leq}_{\Ss}, {\mathbbm{1}}_{\Ss} \rangle$ and $\pi$ is an automorphism,
if $H$ is any $(\V, \RR)$-generic filter, then $\V\[{\pi}^{-1}(H)\] = \V\[H\]$, and moreover for any formula $\varphi({x}_{1}, \dotsc, {x}_{n})$, any ${\mathring{a}}_{1}, \dotsc, {\mathring{a}}_{n} \in {\V}^{\RR}$, and any $r \in \RR$, $r \: {\forces}_{\RR} \: \varphi({\mathring{a}}_{1}, \dotsc, {\mathring{a}}_{n})$ if and only if $\pi(r) \: {\forces}_{\RR} \: \varphi(\pi({\mathring{a}}_{1}), \dotsc, \pi({\mathring{a}}_{n}))$.
\begin{Lemma} \label{lem:auto}
  Let $\langle \RR, {\leq}_{\RR}, {\mathbbm{1}}_{\RR} \rangle$ be a poset that preserves ${\omega}_{1}$.
  Assume that there exist sequences $\seq{\mathring{x}}{i}{<}{\omega}$, $\langle {\pi}_{r, k}: r \in \RR \wedge k \in \omega \rangle$, and $\langle {\pi}_{r, k, \alpha}: r \in \RR \wedge k \in \omega \wedge \alpha \in {\omega}_{1} \rangle$ satisfying the following properties:
  \begin{enumerate}
    \item
    for each $i < \omega$, ${\mathring{x}}_{i}$ is an $\RR$-name such that ${\forces}_{\RR}\:{{\mathring{x}}_{i} \in \cube}$;
    \item
    for each $r \in \RR$, $k \in \omega$, and $\alpha \in {\omega}_{1}$, ${\pi}_{r, k, \alpha}: \RR \rightarrow \RR$ is an automorphism such that ${\pi}_{r, k, \alpha}(r) = r$ and $\forall i < k\[{\forces}_{\RR}\:{{\pi}_{r, k, \alpha}({\mathring{x}}_{i}) = {\mathring{x}}_{i}}\]$;
    \item
    for each $r \in \RR$ and $k \in \omega$, ${\pi}_{r, k}: \RR \rightarrow \RR$ is an automorphism such that ${\pi}_{r, k}(r) = r$, $\forall i < k\[{\forces}_{\RR}\:{{\pi}_{r, k}({\mathring{x}}_{i}) = {\mathring{x}}_{i}}\]$, and ${\forces}_{\RR}\:{\omega \setminus {\pi}_{r, k}({\mathring{x}}_{k}) \: {\subset}^{\ast} \: {\mathring{x}}_{k}}$;
    \item
    for each $r \in \RR$, $k \in \omega$, and $\alpha, \beta \in {\omega}_{1}$, if $\alpha \neq \beta$, then 
    \begin{align*}
     {\forces}_{\RR}\:{\lc {\pi}_{r, k, \alpha}({\mathring{x}}_{k})  \cap {\pi}_{r, k, \beta}({\mathring{x}}_{k})\rc < \omega}.
    \end{align*}
  \end{enumerate}
  Then there is no $p \in \P$ such that $p \: {\forces}_{\P} \: {\mathring{A} \ \text{is not promptly split by} \ \V \cap {\left( \Pset(\omega) \right)}^{\omega}}$.
\end{Lemma}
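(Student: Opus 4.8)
The plan is to argue by contradiction, combining the machinery already developed in Lemmas \ref{lem:Ninfinite} and \ref{lem:Nunsplit} with the automorphisms provided in hypotheses (1)--(4). Suppose toward a contradiction that there is $p \in \P$ with $p \: {\forces}_{\P} \: {\mathring{A} \ \text{is not promptly split by} \ \V \cap {\left( \Pset(\omega) \right)}^{\omega}}$. Fix a sequence $\AAA$ and a function $\F$ as in Definition \ref{def:Nname}, and write $\mathring{N}$ for $\mathring{N}(\mathring{A}, \AAA, \F)$. The idea is to pass to the forcing extension $\WW = \V^{\RR}$ obtained by forcing with $\RR$. In $\WW$, by Lemma \ref{lem:Ninfinite} we have ${\forces}_{{\P}^{\WW}}\:{\mathring{N} \in \cube}$, and by Lemma \ref{lem:Nunsplit} we have $p \: {\forces}_{{\P}^{\WW}} \: {\mathring{N} \ \text{is not promptly split by} \ \WW \cap {\left( \Pset(\omega) \right)}^{\omega}}$. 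But $\seq{\mathring{x}}{i}{<}{\omega}$ names, in $\WW$, a sequence of infinite subsets of $\omega$, and I want to show that some further two-step iteration produces a generic for ${\P}^{\WW}$ below $p$ in which this sequence (or a modification obtained using the ${\pi}_{r,k}$) actually does promptly split $\mathring{N}$, the desired contradiction.

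The heart of the argument is to use the automorphisms to ``homogenize'' the names and then a counting/$\Delta$-system-style argument over ${\omega}_{1}$. Here is the order of steps I would carry out. First, working in $\WW$, suppose $G$ is $(\WW,{\P}^{\WW})$-generic with $p \in G$, and let $N = \mathring{N}[G] \in \cube$ and $x_i = \mathring{x}_i[G]$. Since $N$ is not promptly split by $\WW \cap {\left( \Pset(\omega) \right)}^{\omega}$, in particular it is not promptly split by $\langle x_i : i < \omega\rangle$; so there exist $k \in \omega$ and $\sigma \in 2^{k+1}$ with $\left( {\bigcap}_{i < k+1}{x_i^{\sigma(i)}} \right) \cap N$ finite. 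Now the point of hypothesis (3): the automorphism ${\pi}_{r,k}$ fixes $\mathring{x}_i$ for $i<k$ and sends $\mathring{x}_k$ to a name for a set whose complement is almost contained in $\mathring{x}_k$ — so by applying ${\pi}_{r,k}$ (for a suitable condition $r$ deciding the relevant data, with $r \leq p$ via ${\pi}_{r,k}(r)=r$) we can flip the value of $\sigma(k)$ while keeping the truth value of the statement, thereby forcing $\sigma(k)$ to be $0$ without loss of generality, and iterating, we can arrange that $\left( {\bigcap}_{i < k+1}{x_i^{0}} \right) \cap N$ is finite for \emph{every} $k$ — i.e. ${\bigcap}_{i<k+1}{x_i} \cap N$ is eventually excluded. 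This is the step where the automorphism homogenization is genuinely needed, and I expect it to be the main obstacle: one has to be careful that the conditions $r$ over which one takes the automorphisms can be chosen below $p$, that the ${\P}^{\WW}$-names behave correctly under the $\RR$-automorphisms extended to ${\P}^{\WW}$ (using $\pi(\check x) = \check x$ and the name-transfer described before the lemma, noting $\P$ is coded by absolute parameters so ${\pi}_{r,k}$ acts on ${\P}^{\WW}$ as well), and that preservation of ${\omega}_{1}$ is used to keep the later $\Delta$-system argument honest.

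Second, with that reduction in hand, use hypotheses (2) and (4): we have ${\omega}_{1}$-many automorphisms ${\pi}_{r,k,\alpha}$, $\alpha < {\omega}_{1}$, each fixing $r$ and fixing $\mathring{x}_i$ for $i<k$, such that the images ${\pi}_{r,k,\alpha}(\mathring{x}_k)$ are forced to be pairwise almost disjoint. Applying these automorphisms to the situation above yields, for each $\alpha < {\omega}_{1}$, a condition and a statement asserting that $\left( {\bigcap}_{i<k}{x_i} \right) \cap {\pi}_{r,k,\alpha}(x_k) \cap N$ is finite (after the flip, it's the ``$\sigma(k)=0$'' coordinate for the $\alpha$-th copy). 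But $\left( {\bigcap}_{i<k}{x_i} \right) \cap N$ is infinite (it's one of the earlier prompt-splitting intersections, which we may assume survives), and the sets ${\pi}_{r,k,\alpha}(x_k)$, $\alpha<{\omega}_{1}$, are pairwise almost disjoint by (4); an infinite set cannot have uncountably many pairwise almost disjoint subsets each with infinite intersection with it unless almost all of those intersections are finite — but here we are forcing each one to be finite while the next level down the tree, $\left( {\bigcap}_{i<k}{x_i} \right) \cap {\pi}_{r,k,\alpha}(x_k)^{1} \cap N$, to be infinite, and iterating the flip and the almost-disjointness down finitely many more levels forces a partition of a fixed infinite subset of $N$ into uncountably many pieces, two of which must then be disjoint and co-infinite in it — contradicting ${\omega}_{1}$ being preserved (a fixed countable set cannot be split into uncountably many nonempty pieces). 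Extracting this final numerical contradiction is routine once the homogenization and the almost-disjointness are set up; the real work, again, is Step 1.
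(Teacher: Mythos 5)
Your setup (contradiction, Lemmas \ref{lem:Ninfinite} and \ref{lem:Nunsplit}, passing to $\WW=\V[G]$ for an $\RR$-generic $G$, and the intent to use (3) for a ``flip'' and (2)+(4) for an ${\omega}_{1}$-indexed family) matches the paper's opening moves, but the core of the argument is missing and the contradiction you extract is not valid. The paper's actual contradiction is a chain-condition argument: one first chooses $k$ \emph{minimal} such that some $q\:{\leq}^{\WW}_{\P}\:p$ and $\sigma\in{2}^{k+1}$ force $\left({\bigcap}_{i<k+1}{({\mathring{x}}_{i}[G])}^{\sigma(i)}\right)\cap\mathring{N}$ finite; minimality guarantees $p$ forces the length-$k$ intersection $\left({\bigcap}_{i<k}{({\mathring{x}}_{i}[G])}^{\sigma(i)}\right)\cap\mathring{N}$ to be \emph{infinite}. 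If some $q\leq p$ forced that infinite set to be ${\subset}^{\ast}\:{\mathring{x}}_{k}[G]$, one pulls $q$ back to an $\RR$-name $\mathring{q}$, finds $r\in G$ forcing the relevant statement, and applies the ${\omega}_{1}$-many automorphisms ${\pi}_{r,k,\alpha}$ (which fix $r$, so $r$ still forces the transformed statements and they all hold in the \emph{same} $\WW$): the conditions ${\pi}_{r,k,\alpha}(\mathring{q})[G]$ each force the same infinite set to be almost contained in pairwise almost disjoint sets, hence they are pairwise incompatible, giving an antichain of size ${\omega}^{\V}_{1}={\omega}^{\WW}_{1}$ in ${\P}^{\WW}$ — contradicting that ${\P}^{\WW}$ is c.c.c.\@ in $\WW$ (that is where both the Suslin c.c.c.\@ hypothesis and the ${\omega}_{1}$-preservation of $\RR$ are used). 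Your proposal never invokes the c.c.c.\@ of ${\P}^{\WW}$ at all, and your substitute contradiction is false as stated: an infinite (countable) set of natural numbers \emph{can} carry uncountably many pairwise almost disjoint infinite subsets, and since the finiteness statements are forced by \emph{different} conditions, no single extension ever exhibits ``a partition of a fixed infinite subset of $N$ into uncountably many pieces''; ``preservation of ${\omega}_{1}$'' plays no such role.

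The first step of your plan also does not go through as described. The automorphism ${\pi}_{r,k}$ does not interchange ${\mathring{x}}_{k}$ with its complement; it only satisfies ${\forces}_{\RR}\:{\omega\setminus{\pi}_{r,k}({\mathring{x}}_{k})\:{\subset}^{\ast}\:{\mathring{x}}_{k}}$, and it fixes ${\mathring{x}}_{i}$ only for $i<k$, so the proposed iteration ``arranging $\sigma\equiv 0$ for every $k$'' is neither justified nor needed. In the paper, (3) is used exactly once, at the end: having shown (via the antichain claim) that $p$ forces $\left({\bigcap}_{i<k}{({\mathring{x}}_{i}[G])}^{\sigma(i)}\right)\cap\mathring{N}\cap{({\mathring{x}}_{k}[G])}^{1}$ infinite, one applies ${\pi}_{r,k}$ to transfer this to the statement with ${\mathring{x}}_{k}[G]$ in place of its complement, so that $p$ forces \emph{both} the ${\mathring{x}}_{k}$ and the ${({\mathring{x}}_{k})}^{1}$ versions infinite, directly contradicting the finiteness forced by $q$ for the $\sigma(k)$ side. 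Without the minimal-$k$ device, the antichain/c.c.c.\@ argument, and this one-shot use of (3), the proof does not close.
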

\begin{proof}
  Assume not.
  Fix $p \in \P$ so that $p \: {\forces}_{\P} \: {\mathring{A} \ \text{is not promptly split by} \ \V \cap {\left( \Pset(\omega) \right)}^{\omega}}$.
  As before write $\mathring{N}$ for $\mathring{N}(\mathring{A}, \AAA, \F)$.
  For the moment, fix a $(\V, \RR)$-generic filter $G$ and let $\mathbf{W} = \VG$.
  Work inside $\mathbf{W}$.
  By Lemma \ref{lem:Nunsplit} and by (1), we know that $p \: {\forces}_{{\P}^{\mathbf{W}}} \: {\langle {\mathring{x}}_{i}\[G\]: i < \omega \rangle \ \text{does not promptly split} \ \mathring{N}}$.
  Let $k \in \omega$ be minimal with the property that there exist $\sigma \in {2}^{k + 1}$ and $q \: {\leq}^{\mathbf{W}}_{\P} \: p$ such that 
  \begin{align*}
   q \: {\forces}_{{\P}^{\mathbf{W}}} \: {\left( {\bigcap}_{i < k + 1}{{\left( {\mathring{x}}_{i}\[G\] \right)}^{\sigma(i)}} \right) \cap \mathring{N} \ \text{is finite.}}
  \end{align*}
  Choose a $\sigma \in {2}^{k + 1}$ witnessing this property of $k$.
  Then 
  \begin{align*}
   p \: {\forces}_{{\P}^{\mathbf{W}}} \: {\left( {\bigcap}_{i < k}{{\left( {\mathring{x}}_{i}\[G\] \right)}^{\sigma(i)}} \right) \cap \mathring{N} \ \text{is infinite}}, 
  \end{align*}
  where ${\bigcap}_{i < k}{{\left( {\mathring{x}}_{i}\[G\] \right)}^{\sigma(i)}}$ is taken to be $\omega$ when $k = 0$, because of the minimality of $k$ and because ${\forces}_{{\P}^{\mathbf{W}}}\:{\mathring{N} \in \cube}$.
  \begin{Claim} \label{claim:auto1}
     $p \: {\forces}_{{\P}^{\mathbf{W}}} \: {\left( {\bigcap}_{i < k}{{\left( {\mathring{x}}_{i}\[G\] \right)}^{\sigma(i)}} \right) \cap \mathring{N} \cap {\left( {\mathring{x}}_{k}\[G\] \right)}^{1} \ \text{is infinite}}$.
  \end{Claim}
  \begin{proof}
   Suppose not.
   Then $q \: {\forces}_{{\P}^{\mathbf{W}}} \: {\left( {\bigcap}_{i < k}{{\left( {\mathring{x}}_{i}\[G\] \right)}^{\sigma(i)}} \right) \cap \mathring{N} \cap {\left( {\mathring{x}}_{k}\[G\] \right)}^{1} \ \text{is finite}}$, for some $q \: {\leq}^{\mathbf{W}}_{\P} \: p$.
   In other words $q \: {\forces}_{{\P}^{\mathbf{W}}} \: {\left( {\bigcap}_{i < k}{{\left( {\mathring{x}}_{i}\[G\] \right)}^{\sigma(i)}} \right) \cap \mathring{N} \: {\subset}^{\ast} \: {\mathring{x}}_{k}\[G\]}$.
   Fix $\mathring{q} \in {\V}^{\RR}$ with $q = \mathring{q}\[G\]$.
   We can find an $r \in G$ such that back in $\V$, $r \: {\forces}_{\RR} \: {\mathring{q} \: {\leq}_{\P}^{\V\[\mathring{G}\]} \: p}$ and
   \begin{align*}
     r \: {\forces}_{\RR} \: {\text{``}\mathring{q} \: {\forces}_{{\P}^{\V\[\mathring{G}\]}} \: {\left( {\bigcap}_{i < k}{{\mathring{x}}^{\sigma(i)}_{i}} \right) \cap \mathring{N} \: {\subset}^{\ast} \: {\mathring{x}}_{k}}\text{''}.}
   \end{align*}
   For each $\alpha \in {\omega}_{1}$, we have that $r \: {\forces}_{\RR} \: {\pi}_{r, k, \alpha}(\mathring{q}) \: {\leq}^{\V\[{\pi}_{r, k, \alpha}(\mathring{G})\]}_{\P} \: p$ and also that
   \begin{align*}
     r \: {\forces}_{\RR} \: {\text{``}{\pi}_{r, k, \alpha}(\mathring{q}) \: {\forces}_{{\P}^{\V\[{\pi}_{r, k, \alpha}(\mathring{G})\]}} \: {\left( {\bigcap}_{i < k}{{\left( {\pi}_{r, k, \alpha}({\mathring{x}}_{i}) \right)}^{\sigma(i)}} \right) \cap \mathring{N} \: {\subset}^{\ast} \: {\pi}_{r, k, \alpha}({\mathring{x}}_{k})}\text{''}.}
   \end{align*}
   Observe that ${\pi}_{r, k, \alpha}(\mathring{G})\[G\] = \mathring{G}\[{\pi}^{-1}_{r, k, \alpha}(G)\] = {\pi}^{-1}_{r, k, \alpha}(G)$, and so $\V\[{\pi}_{r, k, \alpha}(\mathring{G})\[G\]\] = \V\[{\pi}^{-1}_{r, k, \alpha}(G)\] = \V\[G\] = \mathbf{W}$.
   Also by Clause (2), for each $i < k$, ${\pi}_{r, k, \alpha}({\mathring{x}}_{i})\[G\] = {\mathring{x}}_{i}\[G\]$.
   Therefore in $\mathbf{W}$, we have that $\forall \alpha \in {\omega}_{1}\[{\pi}_{r, k, \alpha}(\mathring{q})\[G\] \: {\leq}^{\mathbf{W}}_{\P} \: p\]$ and that for each $\alpha \in {\omega}_{1}$,
   \begin{align*}
    {\pi}_{r, k, \alpha}(\mathring{q})\[G\] \: {\forces}_{{\P}^{\mathbf{W}}} \: {\left( {\bigcap}_{i < k}{{\left( {\mathring{x}}_{i}\[G\] \right)}^{\sigma(i)}} \right) \cap \mathring{N} \: {\subset}^{\ast} \: {\pi}_{r, k, \alpha}({\mathring{x}}_{k})\[G\]}.
   \end{align*}
   Furthermore by Clause (4), for each $\alpha, \beta \in {\omega}_{1}$, if $\alpha \neq \beta$, then 
   \begin{align*}
    \lc {\pi}_{r, k, \alpha}({\mathring{x}}_{k})\[G\] \cap {\pi}_{r, k, \beta}({\mathring{x}}_{k})\[G\] \rc < \omega.
   \end{align*}
   Since $p \: {\forces}_{{\P}^{\mathbf{W}}} \: {\left( {\bigcap}_{i < k}{{\left( {\mathring{x}}_{i}\[G\] \right)}^{\sigma(i)}} \right) \cap \mathring{N} \ \text{is infinite}}$, it follows that $\left\{ {\pi}_{r, k, \alpha}(\mathring{q})\[G\]: \alpha \in {\omega}_{1} \right\}$ is an antichain in ${\P}^{\mathbf{W}}$.
   However this means that ${\P}^{\mathbf{W}}$ is not a c.c.c.\@ poset in $\mathbf{W}$ because $\langle \RR, {\leq}_{\RR}, {\mathbbm{1}}_{\RR} \rangle$ preserves ${\omega}_{1}$ by hypothesis.
   This is a contradiction which proves the claim.
  \end{proof}
  By Claim \ref{claim:auto1}, we can find an $r \in G$ so that in $\V$, 
  \begin{align*}
    r \: {\forces}_{\RR} \: {\text{``} p \: {\forces}_{{\P}^{\V\[\mathring{G}\]}} \: {\left( {\bigcap}_{i < k}{{\mathring{x}}^{\sigma(i)}_{i}} \right) \cap \mathring{N} \cap {\mathring{x}}^{1}_{k} \ \text{is infinite}} \text{''}}.
  \end{align*}
  Applying ${\pi}_{r, k}$, we have that in $\V$
  \begin{align*}
    r \: {\forces}_{\RR} \: {\text{``} p \: {\forces}_{{\P}^{\V\[{\pi}_{r, k}(\mathring{G})\]}} \: {\left( {\bigcap}_{i < k}{{\left( {\pi}_{r, k}({\mathring{x}}_{i}) \right)}^{\sigma(i)}} \right) \cap \mathring{N} \cap {\left( {\pi}_{r, k}({\mathring{x}}_{k}) \right)}^{1} \ \text{is infinite}} \text{''}}. 
  \end{align*}
  Observe that $\V\[{\pi}_{r, k}(\mathring{G})\[G\]\] = \mathbf{W}$ and that for each $i < k$, ${\pi}_{r, k}({\mathring{x}}_{i})\[G\] = {\mathring{x}}_{i}\[G\]$.
  Therefore in $\mathbf{W}$ we have
  \begin{align*}
    p \: {\forces}_{{\P}^{\mathbf{W}}} \: {\left( {\bigcap}_{i < k} {{\left( {\mathring{x}}_{i}\[G\] \right)}^{\sigma(i)}} \right) \cap \mathring{N} \cap {\left( {\pi}_{r, k}({\mathring{x}}_{k})\[G\] \right)}^{1} \ \text{is infinite.}}
  \end{align*}
  By Clause (3), ${\left( {\pi}_{r, k}({\mathring{x}}_{k})\[G\] \right)}^{1} = \omega \setminus {\pi}_{r, k}({\mathring{x}}_{k})\[G\] \: {\subset}^{\ast} \: {\mathring{x}}_{k}\[G\]$.
  Therefore
  \begin{align*}
    p \: {\forces}_{{\P}^{\mathbf{W}}} \: {\left( {\bigcap}_{i < k}{{\left( {\mathring{x}}_{i}\[G\] \right)}^{\sigma(i)}} \right) \cap \mathring{N} \cap {\mathring{x}}_{k}\[G\] \ \text{is infinite.}}
  \end{align*}
  However this together with Claim \ref{claim:auto1} gives a contradiction because by the choice of $k$ and $\sigma$, there exists $q \: {\leq}^{\mathbf{W}}_{\P} \: p$ such that
  \begin{align*}
    q \: {\forces}_{{\P}^{\mathbf{W}}} \: {\left( {\bigcap}_{i < k}{{\left( {\mathring{x}}_{i}\[G\] \right)}^{\sigma(i)}} \right) \cap \mathring{N} \cap {\left( {\mathring{x}}_{k}\[G\] \right)}^{\sigma(k)} \ \text{is finite.}}
  \end{align*}
This contradiction concludes the proof.
\end{proof}
\begin{Theorem} \label{thm:suslinprompt}
  $\langle \P, {\leq}_{\P}, {\mathbbm{1}}_{\P}, {\perp}_{\P} \rangle$ does not add any real that is not promptly split by $\V \cap {\left( \Pset(\omega) \right)}^{\omega}$.
\end{Theorem}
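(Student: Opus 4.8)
The plan is to deduce the theorem from Lemma \ref{lem:auto}, by writing down once and for all an auxiliary poset together with a system of automorphisms witnessing its hypotheses. Recall that $\P$, the trees ${T}_{0}, {T}_{1}, {T}_{2}$, the name $\mathring{A}$, and the data $\AAA, \F$ are fixed throughout the section, with ${\forces}_{\P}\,{\mathring{A} \in \cube}$, and that Lemma \ref{lem:auto} delivers, from its hypotheses, the conclusion that no $p \in \P$ forces ``$\mathring{A}$ is not promptly split by $\V \cap {\left( \Pset(\omega) \right)}^{\omega}$''. A finite subset of $\omega$ is promptly split by nothing and so is never added by any forcing, while every member of $\cube$ lying in $\V$ is promptly split by some element of $\V \cap {\left( \Pset(\omega) \right)}^{\omega}$ since $\V$ has a promptly splitting family (Lemma \ref{lem:promptomegas}); hence the theorem is equivalent to the assertion that ${\forces}_{\P}\,{\mathring{A} \ \text{is promptly split by} \ \V \cap {\left( \Pset(\omega) \right)}^{\omega}}$ for every $\P$-name $\mathring{A}$ with ${\forces}_{\P}\,{\mathring{A} \in \cube}$. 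Fixing such an $\mathring{A}$ and the associated $\AAA, \F$ as in Definition \ref{def:Nname}, it therefore suffices, by Lemma \ref{lem:auto}, to produce a poset $\langle \RR, {\leq}_{\RR}, {\mathbbm{1}}_{\RR} \rangle$ preserving $\omega_1$, names $\seq{\mathring{x}}{i}{<}{\omega}$, and automorphisms $\langle {\pi}_{r, k}: r \in \RR \wedge k \in \omega \rangle$ and $\langle {\pi}_{r, k, \alpha}: r \in \RR \wedge k \in \omega \wedge \alpha \in \omega_1 \rangle$ satisfying clauses (1)--(4) of that lemma; this data will be chosen independently of $\P$ and $\mathring{A}$.

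First I would take $\RR$ to be the set of all finite partial functions $p : \omega \times \omega \to \omega$ that are injective on each column (that is, $n \mapsto p(i, n)$ is a partial injection for every $i$), ordered by reverse inclusion. Then $\RR$ is countable and atomless, hence c.c.c.\@ (in fact a copy of Cohen forcing), so it preserves $\omega_1$. Writing $f = \bigcup G$ for the generic function and ${f}_{i} = f(i, \cdot)$, a routine density argument shows ${\forces}_{\RR}\,{{f}_{i} \ \text{is a bijection of} \ \omega}$ for each $i$. With $E = \{2n : n \in \omega\}$, I would put $\mathring{x}_{i} = {f}_{i}''E$; clause (1) is then immediate.

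Next, the automorphisms are all \emph{column relabellings}: for $k \in \omega$ and a permutation $\rho$ of $\omega$, let ${\pi}^{k}_{\rho}$ be the automorphism of $\RR$ sending $p$ to ${p}^{\ast}$, where ${p}^{\ast}$ agrees with $p$ off column $k$ and ${p}^{\ast}(k, n) = p(k, {\rho}^{-1}(n))$. Such a ${\pi}^{k}_{\rho}$ acts trivially on every column other than $k$, so ${\forces}_{\RR}\,{{\pi}^{k}_{\rho}(\mathring{x}_{i}) = \mathring{x}_{i}}$ whenever $i \neq k$; moreover ${\pi}^{k}_{\rho}(\mathring{x}_{k})$ is forced equal to ${f}_{k}''\left( \rho''E \right)$, and ${\pi}^{k}_{\rho}(r) = r$ as soon as $\rho$ is the identity on $\dom(r(k, \cdot))$. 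For clause (3): given $r$ and $k$, choose $N$ with $\dom(r(k, \cdot)) \subseteq 2N$, let $\tau$ fix $2N$ pointwise and swap $2n$ with $2n + 1$ for every $n \geq N$, and set ${\pi}_{r, k} = {\pi}^{k}_{\tau}$. Since $\tau''E \cup E = \omega \setminus \{1, 3, \ldots, 2N - 1\}$ and ${f}_{k}$ is forced to be a bijection, ${\pi}_{r, k}(\mathring{x}_{k}) \cup \mathring{x}_{k} = {f}_{k}''\left( \tau''E \cup E \right)$ is forced cofinite, which is exactly ${\forces}_{\RR}\,{\omega \setminus {\pi}_{r, k}(\mathring{x}_{k}) \: {\subset}^{\ast} \: \mathring{x}_{k}}$; the remaining requirements of clause (3) hold because $\tau$ fixes $\dom(r(k, \cdot))$ and every column other than $k$. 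For clauses (2) and (4): given $r$ and $k$, set $D = \dom(r(k, \cdot))$, fix an almost disjoint family $\langle {A}_{\alpha}: \alpha < \omega_1 \rangle$ of infinite co-infinite subsets of $\omega \setminus D$, choose for each $\alpha$ a permutation ${\rho}_{\alpha}$ of $\omega$ that is the identity on $D$ with ${\rho}_{\alpha}''(E \setminus D) = {A}_{\alpha}$, and set ${\pi}_{r, k, \alpha} = {\pi}^{k}_{{\rho}_{\alpha}}$. Each ${\pi}_{r, k, \alpha}$ fixes $r$ and every $\mathring{x}_{i}$ with $i \neq k$, which gives clause (2); and for $\alpha \neq \beta$, ${\rho}_{\alpha}''E \cap {\rho}_{\beta}''E = (E \cap D) \cup ({A}_{\alpha} \cap {A}_{\beta})$ is finite, so, ${f}_{k}$ being forced injective, ${\pi}_{r, k, \alpha}(\mathring{x}_{k}) \cap {\pi}_{r, k, \beta}(\mathring{x}_{k}) = {f}_{k}''\left( {\rho}_{\alpha}''E \cap {\rho}_{\beta}''E \right)$ is forced finite, which gives clause (4). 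Invoking Lemma \ref{lem:auto} for this data then finishes the proof.

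I do not expect a serious conceptual obstacle, but the design of $\RR$ and of the $\mathring{x}_{i}$ is the real point: a more naive choice --- for instance taking $\mathring{x}_{i}$ to be a generic subset of $\omega$ and using bit-flips or permutations of $\omega$ as automorphisms --- does not obviously support clause (4), since the automorphic images of such an $\mathring{x}_{i}$ need not be pairwise almost disjoint. Realising $\mathring{x}_{i}$ instead as ${f}_{i}''E$ for a generic \emph{injection} ${f}_{i}$ is what makes clause (4) work, because the images then become ${f}_{i}''{A}_{\alpha}$ for a genuine almost disjoint family $\langle {A}_{\alpha}: \alpha < \omega_1 \rangle$. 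The remaining work is bookkeeping: for given $r$ and $k$ one must find a single permutation of $\omega$ that is the identity on the finite set $\dom(r(k, \cdot))$ --- so that the induced column relabelling fixes $r$ and, automatically, each $\mathring{x}_{i}$ with $i < k$ --- while carrying $E$ to a set with the required global shape (one whose union with $E$ is cofinite in clause (3), a member of a prescribed almost disjoint family in clause (4)); this is arranged using only the finiteness of $\dom(r(k, \cdot))$ together with the freedom to modify the almost disjoint family on it, after which the forced bijectivity and injectivity of ${f}_{k}$ transfer the combinatorics of $E$ and of the ${A}_{\alpha}$ to $\mathring{x}_{k}$ and its images.
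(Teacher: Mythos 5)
Your proof is correct and follows essentially the same route as the paper: both reduce the theorem to Lemma \ref{lem:auto} by using the poset of finite, column-wise injective partial functions from $\omega \times \omega$ to $\omega$, with column-wise permutation automorphisms, an even/odd swap to secure clause (3), and an almost disjoint family to secure clause (4). The only difference is cosmetic: the paper takes ${\mathring{x}}_{k}$ to be the preimage of the evens under the generic column map and lets the permutations act on the range, whereas you take the image under the generic column bijection and relabel the domain --- a mirror image of the same argument.
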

\begin{proof}
  If not, then there would be a $\P$-name $\mathring{A}$ such that ${\forces}_{\P}{\mathring{A} \in \cube}$ and a $p \in \P$ such that $p \: {\forces}_{\P} \: {\mathring{A} \ \text{is not promptly split by} \ \V \cap {\left( \Pset(\omega) \right)}^{\omega}}$.
  In view of Lemma \ref{lem:auto}, in order to get a contradiction, it suffices to find a c.c.c.\@ poset $\langle \RR, {\leq}_{\RR}, {\mathbbm{1}}_{\RR} \rangle$ together with sequences $\seq{\mathring{x}}{i}{<}{\omega}$, $\langle {\pi}_{r, k}: r \in \RR \wedge k \in \omega \rangle$, and $\langle {\pi}_{r, k, \alpha}: r \in \RR \wedge k \in \omega \wedge \alpha \in {\omega}_{1} \rangle$ satisfying Clauses (1)--(4) there.
  Define $\RR$ to be the collection of all $r$ such that $r$ is a function, $\lc r \rc < \omega$, $\dom(r) \subset \omega \times \omega$, $\ran(r) \subset \omega$, and $\forall \pr{l}{i}, \pr{l}{j} \in \dom(r)\[i \neq j \implies r(l, i) \neq r(l, j) \]$.
  Define $s \: {\leq}_{\RR} \: r$ if and only if $s, r \in \RR$ and $s \supset r$, and define ${\mathbbm{1}}_{\RR} = \emptyset$.
  Obviously $\pr{\RR}{{\leq}_{\RR}, {\mathbbm{1}}_{\RR}}$ is a c.c.c.\@ poset.
  Define $E = \{m \in \omega: m \ \text{is even}\}$ and $O = \{m \in \omega: m \ \text{is odd}\}$.
  Also, for each $r \in \RR$, fix ${L}_{r} \in \omega$ with $\ran(r) \subset {L}_{r}$.
  Fix a $(\V, \RR)$-generic filter $G$ for a moment.
  In $\V\[G\]$, $F = \bigcup G$ is a function from $\omega \times \omega$ to $\omega$ with the property that for each $\pr{l}{i}, \pr{l}{j} \in \omega \times \omega$, if $i \neq j$, then $F(l, i) \neq F(l, j)$.
  Therefore for any $l \in \omega$ and any finite $T \subset \omega$, $\{i < \omega: F(l, i) \in T\}$ is finite.
  For each $l \in \omega$, define ${x}_{l} = \{i \in \omega: F(l, i) \in E \}$.
  It is clear that ${x}_{l} \in \cube$ for every $l \in \omega$.
  Unfixing $G$, back in $\V$, let $\mathring{F}$ be an $\RR$-name such that ${\forces}_{\RR}\:{\mathring{F} = \bigcup\mathring{G}}$, and let $\seq{\mathring{x}}{l}{<}{\omega}$ be a sequence of $\RR$-names such that for each $l < \omega$, ${\forces}_{\RR}\:{{\mathring{x}}_{l} = \{i \in \omega: \mathring{F}(l, i) \in E\}}$.
  Then ${\forces}_{\RR}\:{{\mathring{x}}_{l} \in \cube}$, for all $l \in \omega$.
  
  Now suppose that $f: \omega \rightarrow \omega$ is a permutation and that $k \in \omega$.
  We define a function ${\pi}_{f, k}: \RR \rightarrow \RR$ as follows.
  Let $r \in \RR$ be given.
  Then ${\pi}_{f, k}(r)$ is the function such that $\dom({\pi}_{f, k}(r)) = \dom(r)$ and for every $\pr{l}{i} \in \dom({\pi}_{f, k}(r))$, ${\pi}_{f, k}(r)(l, i) = f(r(l, i))$ when $k = l$, while ${\pi}_{f, k}(r)(l, i) = r(l, i)$ when $l \neq k$.
  It is easy to check that ${\pi}_{f, k}$ is an automorphism.
  Furthermore for each $r \in \RR$, if $\forall m \in {L}_{r}\[f(m) = m\]$, then ${\pi}_{f, k}(r) = r$.
  Fix a $(\V, \RR)$-generic filter $G$.
  Then ${\pi}_{f, k}(\mathring{G})\[G\] = \mathring{G}\[{\pi}^{-1}_{f, k}(G)\] = {\pi}^{-1}_{f, k}(G) = \{r \in \RR: {\pi}_{f, k}(r) \in G\} = \{{\pi}_{{f}^{-1}, k}(s): s \in G\}$.
  Therefore ${\pi}_{f, k}(\mathring{F})\[G\] = \bigcup{\pi}_{f, k}(\mathring{G})\[G\] = \bigcup\{{\pi}_{{f}^{-1}, k}(s): s \in G\}$.
  It follows that for any $\pr{l}{i} \in \omega \times \omega$, ${\pi}_{f, k}(\mathring{F})\[G\](l, i) = {f}^{-1}(\mathring{F}\[G\](l, i))$ when $l = k$, while ${\pi}_{f, k}(\mathring{F})\[G\](l, i) = \mathring{F}\[G\](l, i)$ when $l \neq k$.
  So for every $l \in \omega$ with $l \neq k$, ${\pi}_{f, k}({\mathring{x}}_{l})\[G\] = {\mathring{x}}_{l}\[G\]$, and ${\pi}_{f, k}({\mathring{x}}_{k})\[G\] = \{i \in \omega: \mathring{F}\[G\](k, i) \in f''E\}$.
  In particular, unfixing $G$ and going back to $\V$, we have that for each $l \in \omega$, if $l \neq k$, then ${\forces}_{\RR}\:{{\pi}_{f, k}({\mathring{x}}_{l}) = {\mathring{x}}_{l}}$.
  
  Now, working in $\V$, fix an almost disjoint family $\{{A}_{\alpha}: \alpha < {\omega}_{1}\}$ of infinite subsets of $\omega$.
  Let $r \in \RR$ and $k \in \omega$ be fixed.
  Let $f: \omega \rightarrow \omega$ be a permutation such that $\forall m \in {L}_{r}\[f(m) = m\]$, $f''(E \setminus {L}_{r}) = O \setminus {L}_{r}$, and $f''(O \setminus {L}_{r}) = E \setminus {L}_{r}$.
  Define ${\pi}_{r, k} = {\pi}_{f, k}$.
  Also for each $\alpha < {\omega}_{1}$, choose a permutation ${f}_{\alpha}: \omega \rightarrow \omega$ such that $\forall m \in {L}_{r}\[{f}_{\alpha}(m) = m\]$, ${f}_{\alpha}''(E \setminus {L}_{r}) = {A}_{\alpha} \setminus {L}_{r}$, and ${f}_{\alpha}''(O \setminus {L}_{r}) = \omega\setminus({A}_{\alpha} \cup {L}_{r})$. 
  For each $\alpha \in {\omega}_{1}$, define ${\pi}_{r, k, \alpha} = {\pi}_{{f}_{\alpha}, k}$.
  In light of the observations already made, it suffices to check that ${\forces}_{\RR}\:{\omega \setminus {\pi}_{f, k}({\mathring{x}}_{k}) \: {\subset}^{\ast} \: {\mathring{x}}_{k}}$, and that for any $\alpha, \beta \in {\omega}_{1}$, if $\alpha \neq \beta$, then ${\forces}_{\RR}\:{\lc {\pi}_{{f}_{\alpha}, k}({\mathring{x}}_{k}) \cap {\pi}_{{f}_{\beta}, k}({\mathring{x}}_{k}) \rc < \omega}$.
  To this end, consider an arbitrary $(\V, \RR)$-generic filter $G$.
  In $\V\[G\]$, it is clear that $\left( \omega \setminus \left( {\pi}_{f, k}({\mathring{x}}_{k})\[G\] \right) \right) \setminus \left( {\mathring{x}}_{k}\[G\] \right) \subset \{i \in \omega: \mathring{F}\[G\](k, i) \in {L}_{r}\}$, which is a finite set.
  Similarly, if $\alpha, \beta \in {\omega}_{1}$ and $\alpha \neq \beta$, then ${\pi}_{{f}_{\alpha}, k}({\mathring{x}}_{k})\[G\] \cap {\pi}_{{f}_{\beta}, k}({\mathring{x}}_{k})\[G\] \subset \{i \in \omega: \mathring{F}\[G\](k, i) \in {L}_{r} \cup ({A}_{\alpha} \cap {A}_{\beta})\}$.
  By almost disjointness, ${L}_{r} \cup ({A}_{\alpha} \cap {A}_{\beta})$ is a finite subset of $\omega$.
  Therefore $\{i \in \omega: \mathring{F}\[G\](k, i) \in {L}_{r} \cup ({A}_{\alpha} \cap {A}_{\beta})\}$ is finite as well.
  Hence ${\pi}_{{f}_{\alpha}, k}({\mathring{x}}_{k})\[G\] \cap {\pi}_{{f}_{\beta}, k}({\mathring{x}}_{k})\[G\]$ is a finite set.
  This establishes everything that is needed for the proof of the theorem.
\end{proof}
It is well known that every new real that is added by a finite support iteration of Suslin c.c.c.\@ posets is actually added by a countable fragment of the iteration, and this countable fragment itself can be coded as a Suslin c.c.c.\@ poset (see, for example, \cite{suslinccc} for a proof).
Hence we get the following corollary to Theorem \ref{thm:suslinprompt}, which is analogous to a result of Judah and Shelah for the splitting number.
\begin{Cor} \label{cor:suslinspr}
  A finite support iteration of Suslin c.c.c.\@ posets does not increase $\spr$.
\end{Cor}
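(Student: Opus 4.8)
The plan is to reduce to the single Suslin c.c.c.\@ poset case handled by Theorem \ref{thm:suslinprompt} and then to read off the conclusion via Lemma \ref{lem:promptomegas}. So let $\langle {\P}_{\alpha}, {\mathring{\Q}}_{\alpha}: \alpha < \delta \rangle$ be a finite support iteration of Suslin c.c.c.\@ posets, put $\P = {\P}_{\delta}$, and let $G$ be a $(\V, \P)$-generic filter. I would show that $\V \cap {\left( \Pset(\omega) \right)}^{\omega}$ is still a promptly splitting family in $\V[G]$. Since that family is a promptly splitting family of $\V$, this amounts to saying $\P$ adds no real which fails to be promptly split by $\V \cap {\left( \Pset(\omega) \right)}^{\omega}$, and by Lemma \ref{lem:promptomegas} it shows that $\spr$ is not increased --- the exact analogue of the way in which the Judah--Shelah theorem says a Suslin c.c.c.\@ iteration does not increase $\s$ (there one preserves ``$\V \cap \cube$ is a splitting family'').

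To carry this out, given any $a \in \cube \cap \V[G]$ I would proceed in three steps. First, using the classical fact recalled just before the corollary, fix a $(\V, \Q)$-generic filter $H \in \V[G]$ with $a \in \V[H]$, where $\Q$ is a Suslin c.c.c.\@ poset coded, together with its defining analytic sets, by reals of $\V$ (this is where the countable fragment of the iteration enters; see \cite{suslinccc}). Second, apply Theorem \ref{thm:suslinprompt} in $\V$ to the poset $\Q$: it tells us that $\V[H]$ contains no real not promptly split by $\V \cap {\left( \Pset(\omega) \right)}^{\omega}$, so some $X \in \V \cap {\left( \Pset(\omega) \right)}^{\omega}$ promptly splits $a$. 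Third, observe that ``$X$ promptly splits $a$'' is ${\mathbf{\Pi}}^{0}_{2}$ in $X$ and $a$, hence absolute between $\V[H]$ and $\V[G]$, so $X$ promptly splits $a$ in $\V[G]$ too. As $a$ was arbitrary, $\V \cap {\left( \Pset(\omega) \right)}^{\omega}$ is promptly splitting in $\V[G]$, which is what we need.

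I do not expect a genuinely new obstacle, since the substantive content is already in Theorem \ref{thm:suslinprompt}. The one point that must not be glossed over is the first step: a finite support iteration of Suslin c.c.c.\@ posets is not itself Suslin c.c.c.\@ once its length is uncountable, so Theorem \ref{thm:suslinprompt} cannot be applied to $\P$ directly, and one must instead invoke the ``countable fragment'' fact and verify that the fragment producing $a$ really is coded by a Suslin c.c.c.\@ poset lying in $\V$, so that Theorem \ref{thm:suslinprompt} is legitimately applicable to it over $\V$. The rest --- the absoluteness of ``promptly splits'' and the passage back to $\spr$ through Lemma \ref{lem:promptomegas} --- is routine.
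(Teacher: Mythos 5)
Your proposal is correct and follows essentially the same route the paper intends: the paper derives the corollary from the cited "countable fragment" fact together with Theorem \ref{thm:suslinprompt}, exactly as you do, with the passage to $\spr$ via Lemma \ref{lem:promptomegas} and the (routine) absoluteness of ``promptly splits'' left implicit. Your added care about the fragment being coded in $\V$ as a Suslin c.c.c.\@ poset is precisely the point the paper delegates to \cite{suslinccc}.
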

If $\I$ is any analytic ideal on $\omega$, then the Mathias and Laver forcings associated with $\I$ are examples of Suslin c.c.c.\@ posets.
So, in particular, finite support iterations of Mathias and Laver forcings associated with analytic ideals do not increase $\spr$.
\begin{Question} \label{q:sprsb}
  Is $\s = \spr$?
  Is $\spr \leq \max\{\b, \s\}$?
\end{Question}
\section{A bound for ${\cov}^{\ast}({\ZZZ}_{0})$} \label{sec:bounds}
The two main inequalities of the paper saying that ${\cov}^{\ast}({\ZZZ}_{0}) \leq \max\{\b, \spr\}$ and $\min\{\d, \rr\} \leq {\non}^{\ast}({\ZZZ}_{0})$ will be proved in this section. 
We will need a few lemmas proved in \cite{cov_z0} for our construction.
We state these below without proof and refer the reader to \cite{cov_z0} for details.
\begin{Lemma} [Lemma 12 of \cite{cov_z0}]\label{lem:regular}
 Let $I$ be an interval partition.
 Let $A \subset \omega$ be such that for each $l \geq 0$, there exists $N \in \omega$ such that for each $n \geq N$:
 \begin{enumerate}
  \item
  $\frac{\lc A \cap {I}_{n} \rc}{\lc {I}_{n} \rc} \leq {2}^{-l}$;
  \item
  $\forall i, j \in A \cap {I}_{n}\[i \neq j \implies \lc i - j \rc > {2}^{l - 1}\]$.
 \end{enumerate}
 Then $A$ has density $0$.
\end{Lemma}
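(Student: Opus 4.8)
The plan is to estimate $|A \cap m|/m$ directly for all large $m$ and show it tends to $0$. Write $I = \seq{I}{n}{\in}{\omega}$ and put $a_{n} = \min I_{n}$, so that $0 = a_{0} < a_{1} < \dotsb$ and $I_{n} = [a_{n}, a_{n+1})$. Given $m$, let $n$ be the unique index with $m \in I_{n}$, and split $A \cap m$ as the disjoint union of three pieces: a fixed finite initial segment $A \cap [0, a_{N})$, where $N$ is obtained from the hypothesis; the middle block $\bigcup_{N \le j < n}(A \cap I_{j})$ made up of whole intervals lying below $m$; and the trailing piece $A \cap [a_{n}, m)$ inside the single interval $I_{n}$ containing $m$. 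Hypothesis~(1) will bound the middle block (it is essentially a weighted average of small densities), while hypothesis~(2) will bound the trailing piece (its elements are widely spaced).

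To carry this out, fix $\varepsilon > 0$, choose $l \ge 1$ with $2^{-l} + 2^{-(l-1)} < \varepsilon/2$, and use the hypothesis to fix $N$ so that~(1) and~(2) hold for every $n \ge N$; set $C = |A \cap a_{N}|$. Now take any $m \ge a_{N}$ and let $n$ be the index with $a_{n} \le m < a_{n+1}$; then $n \ge N$, and $|A \cap m| = C + \sum_{N \le j < n}|A \cap I_{j}| + |A \cap [a_{n}, m)|$. By~(1), $\sum_{N \le j < n}|A \cap I_{j}| \le 2^{-l}\sum_{N \le j < n}|I_{j}| = 2^{-l}(a_{n} - a_{N}) \le 2^{-l}m$. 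By~(2) applied to $I_{n}$ (valid since $n \ge N$), any two distinct elements of $A \cap [a_{n}, m)$ differ by more than $2^{l-1}$, so $|A \cap [a_{n}, m)| \le (m - a_{n})/2^{l-1} + 1 \le 2^{-(l-1)}m + 1$. Adding these, $|A \cap m| \le (C + 1) + (2^{-l} + 2^{-(l-1)})m < (C + 1) + (\varepsilon/2)m$, hence $|A \cap m|/m < \varepsilon$ as soon as $m > 2(C+1)/\varepsilon$. Since $\varepsilon$ was arbitrary, $\lim_{m \to \infty}|A \cap m|/m = 0$, i.e.\ $A \in {\ZZZ}_{0}$.

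The one step needing care, and the reason clause~(2) is indispensable, is the trailing piece $A \cap [a_{n}, m)$: bounding it through~(1) alone, say by $|A \cap I_{n}| \le 2^{-l}|I_{n}|$, is worthless when $I_{n}$ is far longer than $m$, and indeed one can build an $A$ with $|A \cap I_{n}|/|I_{n}| \to 0$ but $|A \cap m|/m$ bounded away from $0$ along the right endpoints of suitable truncations of the $I_{n}$; clause~(2) rules this out by forcing the trailing elements to be sparse. The remaining ingredients are only the three-way decomposition and the elementary observation that a weighted average of the densities $|A \cap I_{j}|/|I_{j}|$, all eventually at most $2^{-l}$, stays below $2^{-l}$ up to the fixed additive error $C$.
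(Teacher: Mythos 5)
Your proof is correct: the three-way decomposition into the initial segment below $a_{N}$, the complete intervals $I_{j}$ for $N \le j < n$ (controlled by hypothesis (1)), and the partial interval $[a_{n}, m)$ (controlled by the spacing condition (2)) gives exactly the bound needed, and all the estimates check out. Note that this paper states the lemma without proof, citing Lemma 12 of \cite{cov_z0}; your argument is the natural one and is essentially the same as the proof given in that reference, so there is nothing further to compare.
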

\begin{Lemma}[Lemma 13 of \cite{cov_z0}]\label{lem:tree}
 Let $l$ be a member of $\omega$ greater than $0$ and let $X \subset \omega$ with $\lc X \rc = {2}^{l}$.
 Then there exists a sequence $\{{A}_{\sigma}: \sigma \in {2}^{\leq l}\}$ such that:
 \begin{enumerate}
  \item
  $\forall m \leq l\[{\bigcup}_{\sigma \in {2}^{m}}{A}_{\sigma} = X \wedge \forall \sigma, \tau \in {2}^{m}\[\sigma \neq \tau \implies {A}_{\sigma} \cap {A}_{\tau} = 0 \]\]$;
  \item
  $\forall \sigma \in {2}^{\leq l}\[\lc {A}_{\sigma} \rc = {2}^{l - \lc \sigma \rc}\]$ and $\forall \sigma, \tau \in {2}^{\leq l}\[\sigma \subset \tau \implies {A}_{\tau} \subset {A}_{\sigma}\]$;
  \item
  for each $\sigma \in {2}^{\leq l}$, $\forall i, j \in {A}_{\sigma}\[i \neq j \implies \lc i - j \rc > {2}^{\lc \sigma \rc - 1}\]$.
 \end{enumerate}
\end{Lemma}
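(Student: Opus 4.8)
The plan is to construct the tree $\{{A}_{\sigma} : \sigma \in {2}^{\leq l}\}$ completely explicitly, realizing it as a family of residue classes sitting inside the increasing enumeration of $X$. First I would enumerate $X$ in increasing order as $\langle {x}_{j} : j < {2}^{l} \rangle$. For each $\sigma \in {2}^{\leq l}$ set $m = \lc \sigma \rc$ and let $n(\sigma) = \sum_{i < m}{\sigma(i){2}^{i}}$ be the natural number that $\sigma$ codes when $\sigma(0)$ is read as the \emph{least} significant bit; then $\sigma \mapsto n(\sigma)$ is a bijection of ${2}^{m}$ onto $\{0, \dots, {2}^{m} - 1\}$. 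Now define ${A}_{\sigma} = \{{x}_{j} : j < {2}^{l} \wedge j \equiv n(\sigma) \pmod{{2}^{m}}\}$. The point of this choice is that nesting, cardinality, and spacing all become immediate bookkeeping.

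I would then verify the three clauses, each of which reduces to a one-line check. For clause (2): ${A}_{\sigma}$ consists of the ${x}_{j}$ whose index ranges over one fixed residue class modulo ${2}^{m}$ inside $\{0, \dots, {2}^{l} - 1\}$, and since ${2}^{m} \mid {2}^{l}$ there are exactly ${2}^{l - m} = {2}^{l - \lc \sigma \rc}$ such indices; and if $\sigma \subset \tau$ with $\lc \tau \rc = m'$, then $j \equiv n(\tau) \pmod{{2}^{m'}}$ forces $j \equiv n(\tau) \pmod{{2}^{m}}$ (as $m \leq m'$), while $n(\tau) \equiv n(\sigma) \pmod{{2}^{m}}$ because $\tau \restrict m = \sigma$, so $n(\tau) - n(\sigma) = \sum_{m \leq i < m'}{\tau(i){2}^{i}}$ is divisible by ${2}^{m}$; hence ${A}_{\tau} \subset {A}_{\sigma}$. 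For clause (1): $\{n(\sigma) : \sigma \in {2}^{m}\} = \{0, \dots, {2}^{m} - 1\}$ is a complete residue system modulo ${2}^{m}$, so the sets ${A}_{\sigma}$ for $\sigma \in {2}^{m}$ are pairwise disjoint and cover $X$. For clause (3): if $i, j \in {A}_{\sigma}$ are distinct, write $i = {x}_{a}$ and $j = {x}_{b}$; then $a \equiv b \pmod{{2}^{m}}$ with $a \neq b$, so $\lc a - b \rc \geq {2}^{m}$, and since the enumeration is increasing we have ${x}_{b+1} \geq {x}_{b} + 1$ for all $b$, whence $\lc i - j \rc \geq \lc a - b \rc \geq {2}^{m} > {2}^{m - 1} = {2}^{\lc \sigma \rc - 1}$.

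I do not expect any serious obstacle; the only points demanding care are two bookkeeping choices. The first is the bit-ordering convention in $n(\sigma)$: reading $\sigma(0)$ as the least significant bit is exactly what makes $\sigma \subset \tau$ imply $n(\sigma) \equiv n(\tau) \pmod{{2}^{\lc \sigma \rc}}$, hence makes the nesting ${A}_{\tau} \subset {A}_{\sigma}$ come out right; the opposite convention would break clause (2). The second is the observation used in clause (3) that a gap of at least ${2}^{m}$ between indices forces a gap of at least ${2}^{m}$ between the corresponding elements, which holds precisely because $X$ is listed in increasing order. One could instead build the tree recursively, splitting the increasing enumeration of each ${A}_{\sigma}$ into its even-position and odd-position halves; that works too, but then clause (3) must be pushed through an inductive estimate on the minimal gap inside ${A}_{\sigma}$, whereas the residue-class description gives the bound outright.
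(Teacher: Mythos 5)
Your construction is correct: the residue-class description $A_{\sigma} = \{x_{j} : j < 2^{l} \wedge j \equiv n(\sigma) \pmod{2^{\lc\sigma\rc}}\}$ with the least-significant-bit convention does verify all three clauses exactly as you argue, the key points being that initial segments correspond to coarser residue classes (giving the nesting in clause (2)) and that a gap of at least $2^{m}$ between indices in the increasing enumeration forces a gap of at least $2^{m} > 2^{m-1}$ between the elements themselves (giving clause (3)). Note that this paper deliberately omits the proof, quoting the lemma as Lemma 13 of \cite{cov_z0}; the argument given there proceeds by the recursive alternation you mention at the end (splitting the increasing enumeration of each $A_{\sigma}$ into its even-position and odd-position elements), of which your residue-class formula is precisely the closed form. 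What your version buys is that the cardinality, nesting, and spacing estimates are all immediate from modular arithmetic rather than carried along an induction on $\lc\sigma\rc$; what the recursive version buys is that one never needs to fix a bit-ordering convention, since the convention is implicit in the recursion. Either way the proof is complete.
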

\begin{Def}[Definition 15 of \cite{cov_z0}]\label{def:FJ}
 Let $J$ be an interval partition such that for each $n \in \omega$ there exists ${l}_{n} \in \omega$ such that ${l}_{n} > 0$, ${l}_{n} \geq n$, and $\lc {J}_{n} \rc = {2}^{{l}_{n}}$.
 Applying Lemma \ref{lem:tree}, fix a sequence $\bar{A} = \langle {A}_{n, \sigma}: n \in \omega \wedge \sigma \in {2}^{\leq {l}_{n}}\rangle$ such that for each $n \in \omega$, the sequence $\{{A}_{n, \sigma}: \sigma \in {2}^{\leq {l}_{n}}\}$ satisfies (1)--(3) of Lemma \ref{lem:tree} with $l$ as ${l}_{n}$ and $X$ as ${J}_{n}$.
 Define ${\F}_{J, \bar{A}}$ to be the collection of all functions $f \in \BS$ such that for each $n \in \omega$ and $l < {l}_{n}$, there exists $\sigma \in {2}^{l + 1}$ such that ${f}^{-1}(\{l\}) \cap {J}_{n} = {A}_{n, \sigma}$, and there exists $\tau \in {2}^{{l}_{n}}$ such that ${f}^{-1}(\{{l}_{n}\}) \cap {J}_{n} = {A}_{n, \tau}$.
\end{Def}
\begin{remark} \label{rem:FJ}
Observe that if $f \in {\F}_{J, \bar{A}}$, then for each $n \in \omega$ and $k \in {J}_{n}$, $f(k) \leq {l}_{n}$.
Also for any $n, l \in \omega$,
\begin{align*}
 \displaystyle\frac{\lc \left\{ k \in {J}_{n}: f(k) \geq l \right\} \rc}{\lc {J}_{n} \rc} \leq {2}^{-l},
\end{align*}
and for any $i, j \in \left\{ k \in {J}_{n}: f(k) \geq l \right\}$, if $i \neq j$, then $\lc i - j \rc > {2}^{l - 1}$.
Moreover for any $f \in {\F}_{J, \bar{A}}$, $n \in \omega$, and $l \leq {l}_{n}$, there is ${\sigma}_{f, n, l} \in {2}^{l}$ such that ${A}_{n, {\sigma}_{f, n, l}} = \{k \in {J}_{n}: f(k) \geq l\}$.
\end{remark}
The next lemma is a simple variation of a standard fact.
However the proof we give below is slightly more cumbersome than the standard proof because of our need to ensure Clause (2), which says that the size of each interval is equal to an exact power of $2$.
\begin{Lemma} \label{lem:b}
 There exists a family $B$ of interval partitions such that:
 \begin{enumerate}
  \item
  $\lc B \rc \leq \b$;
  \item
  for each $I \in B$ and for each $n \in \omega$, there exists ${l}_{n} \in \omega$ such that ${l}_{n} > 0$, ${l}_{n} \geq n$, and $\lc {I}_{n} \rc = {2}^{{l}_{n}}$;
  \item
  for any interval partition $J$, there exists $I \in B$ such that $\existsinf n \in \omega \exists k > n\[{J}_{k} \subset {I}_{n}\]$.
 \end{enumerate}
\end{Lemma}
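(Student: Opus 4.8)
The plan is to start from a standard $\b$-sized family of interval partitions that is cofinal in the natural ordering of interval partitions (where $J$ dominates $I$ if almost every block of $J$ contains a block of $I$, or more precisely if $\exists^\infty n\,\exists k>n[J_k \subset I_n]$ — the precise statement we want in Clause (3)), and then refine each partition so that its blocks have lengths that are exact powers of $2$ and grow at least linearly, as demanded by Clause (2). For the first step, I would recall the well-known fact that there is a family $\{f_\alpha : \alpha < \b\} \subset \BS$ of increasing functions that is unbounded, and from each $f_\alpha$ build an interval partition $I^\alpha$ by iterating $f_\alpha$ from some starting point; the standard argument shows that for any interval partition $J$, coded by a suitable increasing function $g_J$, if $f_\alpha$ is not dominated by $g_J$ then $\existsinf n\,\exists k>n[J_k \subset I^\alpha_n]$, and the collection $\{I^\alpha : \alpha<\b\}$ therefore satisfies Clause (3). (This is the "simple variation of a standard fact" referred to in the statement.)

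The real work, as the lemma's preamble warns, is ensuring Clause (2) simultaneously with Clause (3). Given one of the partitions $I = \langle I_n : n\in\omega\rangle$ produced above, I would construct a coarsening $I' = \langle I'_n : n\in\omega\rangle$ as follows: build $I'_n$ by amalgamating a finite consecutive run of the original blocks $I_m$ so that the total length of $I'_n$ lies in a window where I can shave off a prefix (or absorb a few extra points by pushing them into the next block) to make $\lc I'_n\rc$ exactly $2^{l_n}$ for some $l_n$ with $l_n > 0$ and $l_n \geq n$. The point is that I have a free choice of how many consecutive old blocks to merge, so I can always arrange the merged length to exceed $2^n$, and then round it down to the nearest power of $2$ that is still at least $2^n$, dumping the leftover finitely many integers at the front of the block into the previous block $I'_{n-1}$ (a bounded correction, harmless at each finite stage). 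Since each $I'_n$ is a union of whole old blocks $I_m$ (up to the bounded front-correction), any old block $I_m$ entirely inside some $I'_n$ — and in particular, for the $J$ we care about, any $J_k \subset I_m \subset I'_n$ — witnesses the conclusion of Clause (3) for $I'$ as well, because $I'$ is coarser than $I$. Setting $B = \{I' : I \in \{I^\alpha : \alpha<\b\}\}$ then gives Clause (1) immediately.

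The main obstacle I anticipate is bookkeeping the interaction between the "round down to a power of $2$" operation and the front-correction that moves a bounded number of integers into the preceding block: I must verify that this does not disturb earlier blocks already fixed, that the resulting $I'$ is genuinely an interval partition (consecutive blocks, union all of $\omega$, no overlaps), and that the lengths $\lc I'_n\rc = 2^{l_n}$ with $l_n \geq \max\{1,n\}$ can all be met — which they can, because merging enough old blocks makes the available length as large as we wish, and the gap between consecutive powers of $2$ is eventually dwarfed by that length. A clean way to organize this is to process $n = 0, 1, 2, \dots$ in turn, at stage $n$ choosing the least index past everything used so far such that the accumulated length since the previous cut exceeds $2^n$, then cutting at the largest point that makes the block length an exact power of $2 \geq 2^{\max\{1,n\}}$; the "leftover" is then the tail beyond that power of $2$, which is absorbed into stage $n+1$ rather than moved backward, avoiding the front-correction issue entirely. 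I expect the verification of Clause (3) for the coarsening to be the one genuinely nontrivial check, and it follows from coarseness as sketched above.
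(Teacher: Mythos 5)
Your reduction of Clause (3) to ``coarseness'' is where the argument breaks. In both versions of your rounding step (shaving a prefix into the previous block, or cutting at the largest power of $2$ below the accumulated length and pushing the tail into the next stage), the cut point will in general fall strictly inside one of the original blocks $I_m$: the accumulated length of a run of whole old blocks has no reason to be an exact power of $2$, so to get $\lc I'_n \rc = 2^{l_n}$ you must split an old block. Consequently $I'$ is \emph{not} a coarsening of $I$, and the key claim ``each $I'_n$ is a union of whole old blocks, so any $J_k \subset I_m$ witnesses Clause (3) for $I'$'' is false for exactly those old blocks that straddle a cut. This matters because $J$ is quantified after $B$ is fixed: the only information you have about the pair $(I,J)$ is that infinitely many old blocks $I_n$ contain some $J_k$ with $k>n$, and nothing prevents those witnessing blocks $J_k$ from straddling the new cut points, in which case they are contained in no block of $I'$ at all. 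So Clause (3) for $I'$ does not follow from Clause (3) for $I$ by the argument you give; repairing it would need genuine extra quantitative work (e.g.\ strengthening the first step to put several consecutive $J$-blocks inside infinitely many $I_n$ and bounding the number of cuts per old block), none of which is sketched.

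A secondary concern is that the ``standard fact'' you start from is quoted in precisely the form that needs proof: getting the witness with $k>n$ (rather than merely some $J_k$ inside some $I_n$) is not automatic from unboundedness over a single coding function $g_J$, and the paper spends most of its proof on exactly this point, dominating all the shifted functions ${g}_{k}(n) = {j}_{k+n}$ by a single $g$ and then using $\existsinf n \, (f(n) > g(n))$ together with a minimality argument. The paper also sidesteps your rounding problem entirely: it never modifies a partition after the fact, but builds ${I}_{f}$ directly from $f$ by setting ${i}_{f, n+1} = {i}_{f,n} + {2}^{{l}_{n}}$ with ${l}_{n} \geq \max\{1, n\}$ chosen large enough that ${i}_{f, n+1} \geq f(n+1)$ --- possible because only a lower bound on the block length is ever needed --- so Clause (2) holds by construction and no transfer argument is required. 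I would recommend adopting that direct construction rather than trying to salvage the coarsening step.
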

\begin{proof}
 For each $f \in \BS$ define an interval partition ${I}_{f} = \langle {i}_{f, n}: n \in \omega \rangle$ as follows.
 Define ${i}_{f, 0} = 0$, and given ${i}_{f, n} \in \omega$, let $L = \max\{({i}_{f, n}) + 1, f(n + 1)\}$.
 Find ${l}_{n} \in \omega$ such that ${l}_{n} > 0$, ${l}_{n} \geq n$, and ${2}^{{l}_{n}} \geq L - {i}_{f, n}$.
 Define ${i}_{f, n + 1} = {2}^{{l}_{n}} + {i}_{f, n}$.
 Note that ${i}_{f, n} < ({i}_{f, n}) + 1 \leq L \leq {i}_{f, n + 1}$.
 Note also that $f(n + 1) \leq L \leq {i}_{f, n + 1}$.
 This completes the definition of ${I}_{f}$, which is clearly an interval partition.
 For each $n \in \omega$, $\lc {I}_{f, n} \rc = {i}_{f, n + 1} - {i}_{f, n} = {2}^{{l}_{n}}$, for some ${l}_{n} \in \omega$ with ${l}_{n} > 0$ and ${l}_{n} \geq n$.
 
 Now suppose $U \subset \BS$ is an unbounded family with $\lc U \rc = \b$.
 Put $B = \{{I}_{f}: f \in U\}$.
 Clauses (1) and (2) hold by construction.
 So we verify (3).
 Let $J = \seq{j}{n}{\in}{\omega}$ be any interval partition.
 For each $k \in \omega$, define ${g}_{k} \in \BS$ by ${g}_{k}(n) = {j}_{k + n}$, for all $n \in \omega$.
 Let $g \in \BS$ be such that $\forall k \in \omega\[{g}_{k} \; {\leq}^{\ast} \; g\]$.
 Since $U$ is unbounded, find $f \in U$ such that $X = \{n \in \omega: f(n) > g(n)\}$ is infinite.
 We check that ${I}_{f}$ has the required properties.
 Fix $N \in \omega$.
 Choose $m > N + 1$ such that ${j}_{m} \geq {i}_{f, N + 1}$.
 Let $k = m - N - 1 \geq 1$.
 By choice of $g$, there exists ${N}_{k} \in \omega$ such that $\forall n \geq {N}_{k}\[{j}_{k + n} \leq g(n)\]$.
 Let $M = \max\{N + 1, {N}_{k}\}$.
 Since $X$ is infinite, there exists $n \in X$ with $n \geq M$.
 For any such $n$, ${j}_{k + n} \leq g(n) < f(n) \leq {i}_{f, n}$.
 So we conclude that there exists $n \geq N + 1$ such that ${j}_{k + n} < {i}_{f, n}$.
 Let $n$ be the minimal number with this property.
 Note that $N + 1$ does not have this property because ${j}_{k + N + 1} = {j}_{m} \geq {i}_{f, N + 1}$.
 So $n > N + 1$ and so $n - 1 \geq N + 1$.
 It follows by the minimality of $n$ that ${i}_{f, n - 1} \leq {j}_{k + n - 1} < {j}_{k + n} < {i}_{f, n}$.
 Therefore, ${J}_{k + n - 1} \subset {I}_{f, n - 1}$.
 Note that $k + n - 1 > n - 1$ because $k \geq 1$ and also that $n - 1 > N$.
 Thus we have proved that $\forall N \in \omega \exists l > N \exists l' > l\[{J}_{l'} \subset {I}_{f, l}\]$, which establishes (3).
\end{proof}
\begin{Def} \label{def:Z}
 Let $J$ be any interval partition such that for each $n \in \omega$, there exists ${l}_{n} \in \omega$ such that ${l}_{n} > 0$, ${l}_{n} \geq n$, and $\lc {J}_{n} \rc = {2}^{{l}_{n}}$.
 Let $\bar{A}$ and ${\F}_{J, \bar{A}}$ be as in Definition \ref{def:FJ}.
 For any interval partition $I$, function $f \in {\F}_{J, \bar{A}}$, and $l \in \omega$, define ${Z}_{I, J, f, l} = \{m \in \omega: \exists k \in {I}_{l}\[m \in {J}_{k} \wedge f(m) \geq l\]\}$.
 Define ${Z}_{I, J, f} = {\bigcup}_{l \in \omega}{{Z}_{I, J, f, l}}$.
\end{Def}
\begin{Lemma} \label{lem:Zdensity}
 For any $I, J$, and $f$ as in Definition \ref{def:Z}, ${Z}_{I, J, f}$ has density $0$.
\end{Lemma}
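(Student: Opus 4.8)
The plan is to deduce this from Lemma~\ref{lem:regular}, applied with the interval partition $J$ in the role of the interval partition called ``$I$'' in that lemma and with $A = Z_{I, J, f}$. The crucial first step is to identify, for each $k \in \omega$, the trace $J_k \cap Z_{I, J, f}$. Since $I$ is an interval partition, there is a unique $l(k) \in \omega$ with $k \in I_{l(k)}$; and since $J$ is a partition, a point $m \in J_k$ can satisfy the defining condition of $Z_{I, J, f, l}$ only with $J_k$ itself as the witnessing block, i.e.\ only if $k \in I_l$, which forces $l = l(k)$. Hence $m \in J_k \cap Z_{I, J, f}$ if and only if $f(m) \ge l(k)$, so that $J_k \cap Z_{I, J, f} = \{m \in J_k : f(m) \ge l(k)\}$ for every $k \in \omega$.

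Next I would note that $l(k) \to \infty$ as $k \to \infty$: for any $L \in \omega$ the set $\bigcup_{l < L} I_l$ is a finite initial segment of $\omega$, so $l(k) \ge L$ whenever $k \ge \min I_L$. Now fix $l \ge 0$ as in the hypothesis of Lemma~\ref{lem:regular} and put $N = \min I_l$. For $k \ge N$ we have $l(k) \ge l$, whence $J_k \cap Z_{I, J, f} = \{m \in J_k : f(m) \ge l(k)\} \subseteq \{m \in J_k : f(m) \ge l\}$. The two estimates recorded in Remark~\ref{rem:FJ} (taken with $n = k$ and with this same $l$) apply to the larger set $\{m \in J_k : f(m) \ge l\}$ and hence to $J_k \cap Z_{I, J, f}$, giving $\frac{\lc J_k \cap Z_{I, J, f} \rc}{\lc J_k \rc} \le 2^{-l}$ and $\lc i - j \rc > 2^{l - 1}$ for any two distinct $i, j \in J_k \cap Z_{I, J, f}$. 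These are precisely clauses~(1) and~(2) of Lemma~\ref{lem:regular} for the interval partition $J$, so that lemma yields that $Z_{I, J, f}$ has density $0$.

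I do not foresee a genuine obstacle here, since the real work has already been done in Definition~\ref{def:FJ}, Remark~\ref{rem:FJ}, and Lemma~\ref{lem:regular}. The only point requiring a little care is the bookkeeping in the first step: observing that, restricted to a single block $J_k$, the union $\bigcup_{l} Z_{I, J, f, l}$ collapses to the single threshold condition $f(m) \ge l(k)$. Once that is in hand, the density-zero conclusion follows immediately.
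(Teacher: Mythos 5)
Your proposal is correct and follows essentially the same route as the paper: apply Lemma \ref{lem:regular} with $J$ as the interval partition and $Z_{I,J,f}$ as the set $A$, observing that for blocks $J_n$ beyond the start of $I_l$ the trace $Z_{I,J,f} \cap J_n$ is contained in $\{m \in J_n : f(m) \geq l\}$, so Remark \ref{rem:FJ} supplies both clauses. Your exact identification of the trace as $\{m \in J_k : f(m) \geq l(k)\}$ is a slightly finer observation than the containment the paper uses, but the argument is the same.
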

\begin{proof}
 We apply Lemma \ref{lem:regular} with $J$ and ${Z}_{I, J, f}$ as the $I$ and the $A$ of Lemma \ref{lem:regular} respectively.
 To check clauses (1) and (2) of Lemma \ref{lem:regular}, fix $x \geq 0$, a member of $\omega$.
 Let $N = {i}_{x} \in \omega$, and suppose $n \geq N$ is given.
 Then by the definition of ${Z}_{I, J, f}$, ${Z}_{I, J, f} \cap {J}_{n} \subset \{m \in {J}_{n}: f(m) \geq x\}$.
 Hence by Remark \ref{rem:FJ}, $\frac{\lc {Z}_{I, J, f} \cap {J}_{n} \rc}{\lc {J}_{n} \rc} \leq \frac{\lc \{m \in {J}_{n}: f(m) \geq x\}\rc}{\lc {J}_{n} \rc} \leq {2}^{-x}$, as required for clause (1).
 Also, $\forall i, j \in {Z}_{I, J, f} \cap {J}_{n}\[i \neq j \implies \lc i - j \rc > {2}^{x - 1}\]$, as required for clause (2).
 Thus by Lemma \ref{lem:regular}, ${Z}_{I, J, f}$ has density $0$.
\end{proof}
\begin{Lemma} \label{lem:bounded}
Let $J$, $\bar{A}$, and ${\F}_{J, \bar{A}}$ be as in Definition \ref{def:FJ}.
Let $B$ be a family of interval partitions satisfying (1)--(3) of Lemma \ref{lem:b}.
Fix $f \in {\F}_{J, \bar{A}}$.
Suppose $X \subset \omega$ is such that for each $I \in B$, $X \cap {Z}_{I, J, f}$ is finite.
Then there exists $n \in \omega$ such that $f''X \subset n$.
\end{Lemma}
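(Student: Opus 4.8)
The plan is to prove the contrapositive: assuming $f''X$ is unbounded in $\omega$, I will construct an interval partition $J'$ so that, for the partition $I \in B$ furnished by clause (3) of Lemma \ref{lem:b} applied to $J'$, the set $X \cap {Z}_{I, J, f}$ turns out to be infinite, contradicting the hypothesis on $X$.

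First I would extract a witnessing sequence. Using that $f''X$ is unbounded, together with the fact that each block $J_k$ is finite, I would recursively choose $m_{0}, m_{1}, \dotsc \in X$ and indices $k_{0} < k_{1} < \dotsb$ with $m_{p} \in J_{k_{p}}$ and $f(m_{p}) \geq p$ for every $p \in \omega$: having chosen $m_{0}, \dotsc, m_{p - 1}$, the set $J_{0} \cup \dotsb \cup J_{k_{p - 1}}$ is finite, so if no $m \in X$ lying outside it had $f(m) \geq p$, then $f''X$ would be finite; choosing such an $m$ to be $m_{p}$ forces its $J$-index $k_{p}$ to be strictly larger than $k_{p - 1}$.

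Next I would build $J' = \seq{J'}{n}{\in}{\omega}$ directly from the sequence $\langle k_{p} : p \in \omega \rangle$ by letting ${J'}_{0} = \{ j \in \omega : j \leq k_{0} \}$ and ${J'}_{p} = \{ j \in \omega : k_{p - 1} < j \leq k_{p} \}$ for $p \geq 1$. Since $\langle k_{p} : p \in \omega \rangle$ is strictly increasing and cofinal in $\omega$, $J'$ is a genuine interval partition, and by construction $k_{p}$ is the right endpoint of ${J'}_{p}$, so $k_{p} \in {J'}_{p}$ for every $p$. Applying Lemma \ref{lem:b}(3) to $J'$ yields $I \in B$ with $\existsinf n \in \omega \, \exists k' > n \, [{J'}_{k'} \subset {I}_{n}]$. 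For each such $n$, pick a witness $k' > n$ with ${J'}_{k'} \subset {I}_{n}$; then $k_{k'} \in {J'}_{k'} \subset {I}_{n}$, while $m_{k'} \in J_{k_{k'}} \cap X$ and $f(m_{k'}) \geq k' > n$, so, taking $k = k_{k'}$ in Definition \ref{def:Z}, $m_{k'} \in {Z}_{I, J, f, n} \subseteq {Z}_{I, J, f}$. As $n$ ranges over the infinitely many values supplied by Lemma \ref{lem:b}(3), the chosen witnesses $k'$ are unbounded, hence the points $m_{k'}$ lie in infinitely many distinct blocks $J_{k_{k'}}$ and so form infinitely many distinct elements of $X \cap {Z}_{I, J, f}$, contradicting the hypothesis that $X \cap {Z}_{I, J, f}$ is finite.

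I expect the only genuinely delicate point to be the design of $J'$. Lemma \ref{lem:b}(3) hands back an index $k'$ that is larger than $n$ but otherwise completely uncontrolled, so $J'$ must be arranged so that \emph{every} block ${J'}_{k'}$ contains an index $k_{p}$ with $p \geq k'$ (and therefore $f(m_{p}) \geq p \geq k' > n$). Placing $k_{k'}$ at the right-hand end of ${J'}_{k'}$ achieves exactly this and converts the otherwise useless bound $k' > n$ into the inequality $f(m_{k'}) > n$ needed to place $m_{k'}$ into ${Z}_{I, J, f, n}$. The remaining verifications — that $J'$ is an interval partition, that the recursion in the second paragraph does not terminate, and that distinct witnesses yield distinct points of $X$ — are routine.
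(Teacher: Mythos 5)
Your proof is correct and follows essentially the same route as the paper: assume $f''X$ is unbounded, encode a sequence of witnesses $m_p\in X$ with large $f$-values into an auxiliary interval partition of the $J$-block indices, apply clause (3) of Lemma \ref{lem:b} to that partition, and conclude that $X$ meets $Z_{I,J,f,l}$ for infinitely many $l$. The only cosmetic difference is bookkeeping (you pin the witness index $k_p$ at the right end of the $p$-th block, while the paper arranges each block of its partition $K$ to contain a witness with $f$-value exceeding the block's index via a max over earlier blocks); the substance is identical.
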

\begin{proof}
 Suppose for a contradiction that for each $n \in \omega$, there exists $m \in X$ such that $f(m) \geq n$.
 Define an interval partition $K = \seq{k}{n}{\in}{\omega}$ as follows.
 ${k}_{0} = 0$ and suppose that ${k}_{n} \in \omega$ is given, for some $n \in \omega$.
 Define $N = \max\left( \left\{f(m): m \in {\bigcup}_{k < {k}_{n}}{{J}_{k}}\right\} \cup \left\{n\right\} \right)$.
 By hypothesis, there exists $m \in X$ such that $f(m) \geq N + 1$.
 Choose such an $m \in X$ and let $k$ be such that $m \in {J}_{k}$.
 Note that ${k}_{n} \leq k$ by the definition of $N$.
 Define ${k}_{n + 1} = k + 1$.
 This completes the definition of $K$.
 Note that $\forall n \in \omega \exists k \in {K}_{n} \exists m \in X \cap {J}_{k}\[f(m) > n\]$.
 By clause (3) of Lemma \ref{lem:b}, there is an interval partition $I \in B$ such that $\existsinf l \in \omega \exists n > l\[{K}_{n} \subset {I}_{l}\]$.
 Consider any $l \in \omega$ for which there exists $n > l$ such that ${K}_{n} \subset {I}_{l}$.
 There exist $k \in {I}_{l}$ and $m \in X \cap {J}_{k}$ such that $f(m) > l$.
 It follows that $m \in X \cap {Z}_{I, J, f, l}$.
 Thus we conclude that $\existsinf l \in \omega\[X \cap {Z}_{I, J, f, l} \neq 0\]$, contradicting the hypothesis that $X \cap {Z}_{I, J, f}$ is finite, for all $I \in B$.
\end{proof}
\begin{Def} \label{def:fc}
 Let $J$ and $\bar{A}$ be as in Definition \ref{def:FJ}.
 Suppose $C: \omega \rightarrow {2}^{< \omega}$ and that for each $n \in \omega$, $\dom(C(n)) \geq {l}_{n}$.
 For each $l < {l}_{n}$, define ${\sigma}_{n, l} = {\left( C(n) \restrict l \right)}^{\frown}{\langle 1 - C(n)(l) \rangle} \in {2}^{l + 1}$, and define ${\sigma}_{n, {l}_{n}} = C(n)\restrict {l}_{n} \in {2}^{{l}_{n}}$.
 Note that for all $l < l' \leq {l}_{n}$, ${A}_{n, {\sigma}_{n, l}} \cap {A}_{n, {\sigma}_{n, l'}} = 0$ and that ${\bigcup}_{l \leq {l}_{n}}{{A}_{n,{\sigma}_{n, l}}} = {J}_{n}$.
 Let ${f}_{C}: \omega \rightarrow \omega$ be defined as follows.
 Given $n \in \omega$ and $k \in {J}_{n}$, ${f}_{C}(k) = l$, where $l$ is the unique number $l \leq {l}_{n}$ such that $k \in {A}_{n, {\sigma}_{n, l}}$.
 It is easy to check that ${f}_{C} \in {\F}_{J, \bar{A}}$
\end{Def}
\begin{Theorem} \label{thm:main1}
 Let $\kappa$ be a cardinal on which a tortuous coloring exists.
 Then ${\cov}^{\ast}({\ZZZ}_{0}) \leq \max\{\kappa, \b\}$.
\end{Theorem}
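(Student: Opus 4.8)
Overall strategy. I would produce a witnessing family for ${\cov}^{\ast}({\ZZZ}_{0})$ of the form $\F = \left\{ {Z}_{I, {J}^{\alpha}, {f}^{\alpha}} : I \in B \wedge \alpha < \kappa \right\}$, where $B$ is the family of at most $\b$ interval partitions furnished by Lemma \ref{lem:b}, and where $\langle {J}^{\alpha} : \alpha < \kappa \rangle$, $\langle {\bar{A}}^{\alpha} : \alpha < \kappa \rangle$, $\langle {f}^{\alpha} : \alpha < \kappa \rangle$ are sequences with ${J}^{\alpha}$, ${\bar{A}}^{\alpha}$ as in Definition \ref{def:FJ} and ${f}^{\alpha} \in {\F}_{{J}^{\alpha}, {\bar{A}}^{\alpha}}$, to be constructed below. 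By Lemma \ref{lem:Zdensity} each member of $\F$ has density $0$, so $\F \subset {\ZZZ}_{0}$ and $\lc \F \rc \leq \b \cdot \kappa = \max\{\kappa, \b\}$. The point of the construction is to guarantee that for every $a \in \cube$ there is some $\alpha < \kappa$ with $(f^{\alpha})''a$ unbounded; granting this, if $a \cap {Z}_{I, {J}^{\alpha}, {f}^{\alpha}}$ were finite for every $I \in B$, then Lemma \ref{lem:bounded} (applied to ${J}^{\alpha}$, which $B$ handles since clause (3) of Lemma \ref{lem:b} is stated for every interval partition) would give $n$ with $(f^{\alpha})''a \subset n$, a contradiction. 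Hence $a$ has infinite intersection with a member of $\F$, and ${\cov}^{\ast}({\ZZZ}_{0}) \leq \max\{\kappa, \b\}$ follows. So the whole problem reduces to building $\kappa$-many triples $\langle {J}^{\alpha}, {\bar{A}}^{\alpha}, {f}^{\alpha} \rangle$ for which no $a \in \cube$ has $(f^{\alpha})''a$ bounded for all $\alpha$.

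Feeding the tortuous coloring into the construction. Exactly as in the proof of Lemma \ref{lem:prompttortuous}, put ${x}_{\alpha, i} = \{ k \in \omega : c(\alpha, k, i) = 0 \}$ and ${X}_{\alpha} = \langle {x}_{\alpha, i} : i < \omega \rangle$; by Lemma \ref{lem:strongtortuous} the family $\{ {X}_{\alpha} : \alpha < \kappa \}$ is promptly splitting, and after discarding the $\alpha$ for which some ${\bigcap}_{i < m}{{x}^{\sigma(i)}_{\alpha, i}}$ is finite — the remaining subfamily is still promptly splitting by Lemma \ref{lem:strongtortuous}, since the $X_\alpha$ that promptly splits a given $a$ necessarily has all of those intersections infinite — we may assume every ${\bigcap}_{i < m}{{x}^{\sigma(i)}_{\alpha, i}}$ is infinite. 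For each remaining $\alpha$ I would choose ${J}^{\alpha}$, ${\bar{A}}^{\alpha}$ and a branch function ${C}^{\alpha}$ as in Definition \ref{def:fc}, and set ${f}^{\alpha} = {f}_{{C}^{\alpha}}$, arranging that whenever ${X}_{\alpha}$ promptly splits $a$, the image $(f^{\alpha})''a$ is unbounded; this suffices, because for any $a \in \cube$ some ${X}_{\alpha}$ promptly splits $a$. The design target is that, for each $l$, the ``on-branch'' level-$l$ set $\{ k \in {J}^{\alpha}_{n} : {f}^{\alpha}(k) \geq l \} = {A}^{\alpha}_{n, {C}^{\alpha}(n) \restrict l}$, taken together over all $n$, should cover modulo a finite set one of the atoms ${\bigcap}_{i < l}{{x}^{\sigma}_{\alpha, i}}$ (e.g. with ${C}^{\alpha}$ the all-$0$ branch and ${A}^{\alpha}_{n, \sigma}$ chosen to refine the atom structure). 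Since ${X}_{\alpha}$ promptly splitting $a$ forces $a$ to meet that atom infinitely often, and since the block depths ${l}_{n}$ of ${J}^{\alpha}$ tend to infinity, $a$ would then meet $\{ k \in {J}^{\alpha}_{n} : {f}^{\alpha}(k) \geq l \}$ for some $n$, i.e. $(f^{\alpha})''a$ would attain values $\geq l$ for every $l$.

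The main obstacle. The delicate part is carrying out this last construction in the face of the rigidity built into ${\F}_{{J}, {\bar{A}}}$: ${J}^{\alpha}$ must be a genuine interval partition whose blocks have size an exact power of $2$ with ${l}_{n} \geq n$, and ${\bar{A}}^{\alpha}$ must satisfy the conclusion of Lemma \ref{lem:tree}, so that each ${A}^{\alpha}_{n, \sigma}$ has exactly ${2}^{{l}_{n} - \lc \sigma \rc}$ elements and any two of them differ by more than ${2}^{\lc \sigma \rc - 1}$. The atoms of an arbitrary promptly splitting family need neither split an interval into the prescribed powers of $2$ nor be spread out this much. So the heart of the argument will be a recursion on $n$ that, given the part of ${J}^{\alpha}$ built so far, selects the right endpoint of the next block so that the finitely many relevant atoms cut it into the required powers of $2$, while keeping the block long enough — possibly after a preliminary thinning or re-indexing of the sets ${x}_{\alpha, i}$ — to recover the spacing demanded by Lemma \ref{lem:tree}(3). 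I expect this reconciliation, rather than the soft reduction in the first paragraph, to be where the real work lies; once the triples $\langle {J}^{\alpha}, {\bar{A}}^{\alpha}, {f}^{\alpha} \rangle$ are in hand, the family $\F$ described above witnesses the theorem.
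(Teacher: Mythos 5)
Your outer reduction is fine and matches the skeleton of the paper's argument: the family $\{{Z}_{I, J, f}: I \in B\}$ built from Lemmas \ref{lem:b}, \ref{lem:Zdensity} and \ref{lem:bounded} does reduce everything to producing, for each $X \in \cube$, some $f \in {\F}_{J, \bar{A}}$ among your chosen witnesses with $f''X$ unbounded. But the heart of the theorem --- how the tortuous coloring actually forces such an $f$ to exist --- is exactly the part you leave as a ``design target'' with an acknowledged unresolved obstacle, and as stated that target cannot be met. If you fix one ${f}^{\alpha} \in {\F}_{{J}^{\alpha}, {\bar{A}}^{\alpha}}$ per $\alpha$ and want ``${X}_{\alpha}$ promptly splits $a$ $\implies$ $({f}^{\alpha})''a$ unbounded'', then (taking $a = \omega \setminus \{k: {f}^{\alpha}(k) \geq l\}$) you are forced into precisely your covering condition: some atom ${\bigcap}_{i < m}{{x}^{\sigma(i)}_{\alpha, i}}$ must be almost contained in $\{k: {f}^{\alpha}(k) \geq l\}$. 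But by Definition \ref{def:FJ} and Remark \ref{rem:FJ}, the latter set meets each block ${J}^{\alpha}_{n}$ in exactly a $2^{-l}$ fraction of it, with any two of its points more than ${2}^{l - 1}$ apart, whereas the atoms of a promptly splitting family can contain arbitrarily long runs of consecutive integers; no choice of the right endpoints of the blocks or of $\bar{A}$ (whose cells must satisfy the spacing clause (3) of Lemma \ref{lem:tree}) can repair this. Thinning or re-indexing the ${x}_{\alpha, i}$ does not help either, because the hypothesis available from Lemma \ref{lem:strongtortuous} is that $a$ meets the \emph{original} Boolean combinations infinitely often, and this gives no information about thinned atoms. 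So the proposal is missing the key idea, and the route it sketches toward it would fail.

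The paper sidesteps this entirely by never trying to align the partition structure with the splitting sets as subsets of $\omega$. It fixes a single $J$ and $\bar{A}$, and feeds the coloring in with its second coordinate ranging over \emph{block indices}: ${C}_{\alpha}(n)(l) = c(\alpha, n, l)$ and ${f}_{\alpha} = {f}_{{C}_{\alpha}}$. Given $X$ with $X \cap {Z}_{I, \alpha}$ finite for all $I, \alpha$ (so ${f}_{\alpha}''X \subset {n}_{\alpha}$ by Lemma \ref{lem:bounded}), one records for each block $n$ meeting $X$ a code ${\tau}_{n} \in {2}^{{l}_{n}}$ with $X \cap {A}_{n, {\tau}_{n}} \neq 0$, passes to a subsequence $A$ of block indices along which these codes converge in ${2}^{\omega}$, and applies Lemma \ref{lem:strongtortuous} to $A$ to get $\alpha$ and a block $k \in A$, $k > {n}_{\alpha}$, on which ${C}_{\alpha}(k)$ agrees with ${\tau}_{k}$ up to length ${n}_{\alpha} + 1$; every point of ${A}_{k, {\tau}_{k} \restrict ({n}_{\alpha} + 1)}$ then gets ${f}_{\alpha}$-value above ${n}_{\alpha}$, contradicting ${f}_{\alpha}''X \subset {n}_{\alpha}$. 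This block-level use of the coloring together with the compactness argument on the codes ${\tau}_{n}$ is the ingredient your proposal lacks.
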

\begin{proof}
 Let $c: \kappa \times \omega \times \omega \rightarrow 2$ be a tortuous coloring.
 Fix any interval partition $J$ with the property that for each $n \in \omega$, there exists ${l}_{n} \in \omega$ such that ${l}_{n} > 0$, ${l}_{n} \geq n$, and $\lc {J}_{n} \rc = {2}^{{l}_{n}}$.
 Let $\bar{A}$ be as in Definition \ref{def:FJ} (with respect to $J$).
 For each $\alpha \in \kappa$, define ${C}_{\alpha}: \omega \rightarrow {2}^{< \omega}$ as follows.
 Given $n \in \omega$, ${C}_{\alpha}(n)$ is the function in ${2}^{{l}_{n}}$ such that for each $l < {l}_{n}$, ${C}_{\alpha}(n)(l) = c(\alpha, n, l)$.
 Define ${f}_{\alpha} = {f}_{{C}_{\alpha}} \in {\F}_{J, \bar{A}}$.
 Fix a family $B$ of interval partitions satisfying clauses (1)--(3) of Lemma \ref{lem:b}.
 For each $I \in B$ and $\alpha \in \kappa$, let ${Z}_{I, \alpha} = {Z}_{I, J, {f}_{\alpha}}$.
 By Lemma \ref{lem:Zdensity}, each ${Z}_{I, \alpha}$ has density $0$.
 Let $\GGG = \{{Z}_{I, \alpha}: I \in B \wedge \alpha \in \kappa\}$ and note that $\lc \GGG \rc \leq \max\{\kappa, \b\}$.
 We will show that $\forall X \in \cube \exists I \in B \exists \alpha \in \kappa\[\lc X \cap {Z}_{I, \alpha} \rc = \omega\]$.
 Thus $\GGG$ will witness that ${\cov}^{\ast}({\ZZZ}_{0}) \leq \max\{\kappa, \b\}$.
 Fix $X \in \cube$.
 Assume for a contradiction that $X \cap {Z}_{I, \alpha}$ is finite for all $I \in B$ and $\alpha \in \kappa$.
 $L = \{n \in \omega: {J}_{n} \cap X \neq 0 \}$ is infinite because $X$ is infinite.
 For each $n \in L$, there exists ${\tau}_{n} \in {2}^{{l}_{n}}$ such that $X \cap {A}_{n, {\tau}_{n}} \neq 0$. 
 By Lemma \ref{lem:bounded} for each $\alpha \in \kappa$ there exists ${n}_{\alpha} \in \omega$ such that ${f}_{\alpha}'' X \subset {n}_{\alpha}$.
 Next, for each $n \in L$ let ${x}_{n}$ be the member of ${2}^{\omega}$ such that ${x}_{n} \restrict {l}_{n} = {\tau}_{n}$ and $\forall l \in \omega \setminus {l}_{n}\[{x}_{n}(l) = 0\]$.
 Find $A \in {\[L\]}^{\omega}$ and $x \in {2}^{\omega}$ such that $\seq{x}{n}{\in}{A}$ converges to $x$.
 Apply Lemma \ref{lem:strongtortuous} to find $\alpha \in \kappa$ such that for each $n \in \omega$ and $\sigma \in {2}^{n + 1}$, $\existsinf k \in A \forall i < n + 1\[\sigma(i) = c(\alpha, k, i)\]$.
 Let $n = {n}_{\alpha}$ and $\sigma = x \restrict n + 1$.
 By convergence, there exists ${k}^{\ast} \in \omega$ such that $\forall k \in A\[k \geq {k}^{\ast} \implies {x}_{k} \restrict n + 1 = x \restrict n + 1\]$.
 Fix $k \in A$ such that $k \geq {k}^{\ast}$, $k > n$, and $\forall i < n + 1\[\sigma(i) = c(\alpha, k, i)\]$.
 It is easy to see that ${\tau}_{k} \restrict n + 1 = {C}_{\alpha}(k) \restrict n + 1$.
 It follows from the definition of ${f}_{\alpha} = {f}_{{C}_{\alpha}}$ that for each $x \in {A}_{k, {\tau}_{k} \restrict n + 1}$, ${f}_{\alpha}(x) > n$.
 However $X \cap {A}_{k, {\tau}_{k} \restrict n + 1} \neq 0$ because $X \cap {A}_{k, {\tau}_{k}} \neq 0$.
 Therefore there exists $x \in X$ such that ${f}_{\alpha}(x) > n = {n}_{\alpha}$, contradicting the fact that ${f}_{\alpha}''X \subset {n}_{\alpha}$.
 This concludes the proof.
\end{proof}
\begin{Cor} \label{cor:main1}
 ${\cov}^{\ast}({\ZZZ}_{0}) \leq \max\{\spr, \b\}$.
\end{Cor}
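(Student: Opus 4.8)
The plan is to derive this immediately from Theorem \ref{thm:main1} together with the characterization of $\spr$ established in Lemma \ref{lem:prompttortuous}. First I would invoke Lemma \ref{lem:prompttortuous}, which asserts that $\spr$ equals the least cardinal $\kappa$ carrying a tortuous coloring; in particular, $\spr$ is itself a cardinal on which a tortuous coloring exists. Hence Theorem \ref{thm:main1} applies with $\kappa = \spr$, and it directly yields ${\cov}^{\ast}({\ZZZ}_{0}) \leq \max\{\spr, \b\}$.

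Since the entire combinatorial content --- constructing the density-zero sets ${Z}_{I, \alpha}$ from a tortuous coloring on $\kappa$ and the family $B$ of interval partitions of size $\leq \b$, and verifying via Lemmas \ref{lem:Zdensity}, \ref{lem:bounded}, and \ref{lem:strongtortuous} that every infinite $X$ meets one of the ${Z}_{I, \alpha}$ infinitely often --- is already discharged in the proof of Theorem \ref{thm:main1}, there is no remaining obstacle here. The only point worth noting is that one could alternatively bypass Lemma \ref{lem:prompttortuous} and argue straight from Lemmas \ref{lem:promptomegas} and \ref{lem:tortuous}: a promptly splitting family of size $\spr$ exists by Lemma \ref{lem:promptomegas}, it induces a tortuous coloring on $\spr$ by Lemma \ref{lem:tortuous}, and then Theorem \ref{thm:main1} finishes in exactly the same way.
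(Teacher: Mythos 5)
Your proposal is correct and matches the paper's intended derivation: Corollary \ref{cor:main1} is stated as an immediate consequence of Theorem \ref{thm:main1} applied with $\kappa = \spr$, which is legitimate since by Lemma \ref{lem:prompttortuous} (equivalently, via Lemmas \ref{lem:promptomegas} and \ref{lem:tortuous}) a tortuous coloring exists on $\spr$. Nothing further is needed.
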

Suppose $\V$ is a ground model.
Suppose that the coloring $c$ used in the proof of Theorem \ref{thm:main1} is defined in $\V$ from $\V \cap {(\Pset(\omega))}^{\omega}$ following the procedure of Lemma \ref{lem:tortuous}, and that the family of interval partitions $B$ is defined in $\V$ from $\V \cap \BS$ via the procedure of Lemma \ref{lem:b}.
Let $\VG$ be a forcing extension of $\V$.
If there is a set $X \in \cube$ in $\VG$ such that $Z \cap X$ is finite for all $Z \in \V \cap {\ZZZ}_{0}$, then it follows from the proof of Theorem \ref{thm:main1} that either $\V \cap {(\Pset(\omega))}^{\omega}$ is no longer a promptly splitting family or that $\V \cap \BS$ is no longer an unbounded family in $\VG$.
So we get the following corollary.
\begin{Cor} \label{cor:diagonalizez0}
  Let $\P \in \V$ be a forcing notion that diagonalizes $\V \cap {\ZZZ}_{0}$.
  Then either $\P$ adds an element of $\BS$ that dominates $\V \cap \BS$ or it adds an element of $\cube$ that is not promptly split by $\V \cap {(\Pset(\omega))}^{\omega}$.
\end{Cor}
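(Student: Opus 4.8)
The plan is to run the key part of the proof of Theorem~\ref{thm:main1} inside the generic extension, with the relevant objects built in $\V$ from the full sets $\V\cap{(\Pset(\omega))}^{\omega}$ and $\V\cap\BS$ rather than from size-minimal witnesses. The point is that, run in an extension, that proof needs only two things beyond absolute combinatorics: that the coloring $c$ is still tortuous, and that the interval-partition family $B$ still satisfies Clause~(3) of Lemma~\ref{lem:b}. Since Lemmas~\ref{lem:b}, \ref{lem:tortuous}, \ref{lem:strongtortuous} are theorems of $\ZFC$, the two ways this argument can fail in $\V[G]$ correspond exactly to the two alternatives of the corollary. Note that we do not care about the cardinalities of the witnesses here, so taking the whole of $\V\cap\BS$ and of $\V\cap{(\Pset(\omega))}^{\omega}$ is harmless.

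Concretely: assume $\P\in\V$ diagonalizes $\V\cap{\ZZZ}_{0}$ and fix a witnessing $\P$-name $\mathring{A}$. Working in $\V$, I would enumerate $\V\cap{(\Pset(\omega))}^{\omega}=\seq{X}{\alpha}{<}{\kappa}$ with $\kappa={\c}^{\V}$; this is a promptly splitting family, since it contains one by Lemma~\ref{lem:promptomegas}, so by Lemma~\ref{lem:tortuous} the coloring $c: \kappa\times\omega\times\omega\rightarrow2$ it defines is tortuous in $\V$. Then I would carry out the construction in the proof of Lemma~\ref{lem:b} with $\V\cap\BS$ in place of the size-$\b$ unbounded family, obtaining a family $B\subset\V$ of interval partitions for which that proof establishes Clause~(3) using only that $\V\cap\BS$ is unbounded. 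Finally, fixing $J$ and $\bar{A}$ as in Definition~\ref{def:FJ}, I would define ${f}_{\alpha}={f}_{{C}_{\alpha}}\in{\F}_{J,\bar{A}}$ from $c$ and ${Z}_{I,\alpha}={Z}_{I,J,{f}_{\alpha}}$ for $\alpha<\kappa$ and $I\in B$, exactly as in the proof of Theorem~\ref{thm:main1}; by Lemma~\ref{lem:Zdensity}, $\{{Z}_{I,\alpha}: I\in B\wedge\alpha<\kappa\}\subset\V\cap{\ZZZ}_{0}$.

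Next I would pass to a $(\V,\P)$-generic $G$, set $A=\mathring{A}[G]\in\cube$, and note that diagonalization gives $\lc A\cap{Z}_{I,\alpha}\rc<{\aleph}_{0}$ for all $I\in B$ and $\alpha<\kappa$. Assume for contradiction that in $\V[G]$ neither alternative holds, i.e.\ $\V\cap\BS$ is still unbounded and $\V\cap{(\Pset(\omega))}^{\omega}$ is still promptly splitting. Since $c$ is defined from $\seq{X}{\alpha}{<}{\kappa}$ by the absolute procedure of Lemma~\ref{lem:tortuous}, the $\ZFC$ content of that lemma keeps $c$ tortuous in $\V[G]$; since $B$ is built from $\V\cap\BS$ by the absolute procedure of Lemma~\ref{lem:b}, the $\ZFC$ proof of that lemma keeps $B$ satisfying Clause~(3) in $\V[G]$. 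Running the part of the proof of Theorem~\ref{thm:main1} that establishes $\forall X\in\cube\,\exists I\in B\,\exists\alpha<\kappa\,[\lc X\cap{Z}_{I,\alpha}\rc=\omega]$ inside $\V[G]$ with $A$ in the role of $X$ — it invokes only Lemma~\ref{lem:strongtortuous} (hence tortuousness of $c$), Lemma~\ref{lem:bounded} (hence Clause~(3) for $B$), and absolute facts (finite combinatorics of $\bar{A}$, sequential compactness of ${2}^{\omega}$) — would then yield $I\in B$ and $\alpha<\kappa$ with $\lc A\cap{Z}_{I,\alpha}\rc={\aleph}_{0}$, a contradiction. Hence in $\V[G]$ either $\V\cap\BS$ is bounded, necessarily by a real outside $\V$, so $\P$ adds an element of $\BS$ dominating $\V\cap\BS$; or $\V\cap{(\Pset(\omega))}^{\omega}$ is not promptly splitting, so $\P$ adds an element of $\cube$ not promptly split by it.

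The step I expect to require the most care is the absoluteness bookkeeping: I would need to check that the proofs of Lemmas~\ref{lem:b}, \ref{lem:tortuous}, \ref{lem:strongtortuous} and of the ``$\forall X\,\exists I,\alpha$'' half of Theorem~\ref{thm:main1} are genuine $\ZFC$ arguments whose only parameters that can change truth value between $\V$ and $\V[G]$ are exactly ``$\V\cap\BS$ is unbounded'' and ``$\V\cap{(\Pset(\omega))}^{\omega}$ is promptly splitting,'' with everything else elementary and absolute. This is routine, but it is essentially the entire content of the corollary.
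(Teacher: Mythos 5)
Your proposal is correct and is essentially the paper's own argument: the paper derives the corollary from the observation (stated in the paragraph preceding it) that if $c$ is defined in $\V$ from $\V \cap {(\Pset(\omega))}^{\omega}$ via Lemma \ref{lem:tortuous} and $B$ from $\V \cap \BS$ via Lemma \ref{lem:b}, then running the proof of Theorem \ref{thm:main1} on a diagonalizing set in $\VG$ forces one of the two families to lose its property, exactly as you argue.
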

If $\P$ is a Suslin c.c.c.\@ poset, then the second possibility is ruled out by Theorem \ref{thm:suslinprompt}.
Furthermore if $\P = \langle {\P}_{\alpha}; {\mathring{\Q}}_{\alpha}: \alpha \leq \delta \rangle$ is a finite support iteration of c.c.c.\@ posets and if each iterand preserves all unbounded families, then $\P$ does not increase $\b$.
If $\P$ is also not allowed to increase $\spr$, then of course $\P$ cannot increase ${\cov}^{\ast}({\ZZZ}_{0})$.
\begin{Cor} \label{cor:suslinbcovz_0}
  If a Suslin c.c.c.\@ poset in $\V$ diagonalizes $\V \cap {\ZZZ}_{0}$, then it necessarily adds a dominating real.
  If $\P = \langle {\P}_{\alpha}; {\mathring{\Q}}_{\alpha}: \alpha \leq \delta \rangle$ is a finite support iteration of Suslin c.c.c.\@ posets and if each iterand preserves all unbounded families, then $\P$ does not increase ${\cov}^{\ast}({\ZZZ}_{0})$.
\end{Cor}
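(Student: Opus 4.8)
The plan is to deduce both assertions directly from the results already established, essentially as sketched in the discussion preceding the statement. For the first assertion, suppose $\P \in \V$ is Suslin c.c.c.\ and diagonalizes $\V \cap {\ZZZ}_{0}$. Then Corollary~\ref{cor:diagonalizez0} applies and yields the dichotomy that $\P$ either adds an element of $\BS$ dominating $\V \cap \BS$, or else adds an element of $\cube$ that is not promptly split by $\V \cap {\left( \Pset(\omega) \right)}^{\omega}$. The second horn is impossible for a Suslin c.c.c.\ poset by Theorem~\ref{thm:suslinprompt}, so the first must hold, i.e.\ $\P$ adds a dominating real; this part is immediate.

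For the second assertion, fix a finite support iteration $\P = \langle {\P}_{\alpha}; {\mathring{\Q}}_{\alpha}: \alpha \leq \delta \rangle$ of Suslin c.c.c.\ posets each of whose iterands preserves all unbounded families, and let $G$ be $(\V, \P)$-generic. I would combine two preservation facts. First, because every iterand preserves unbounded families, the standard preservation theorem for finite support iterations of c.c.c.\ forcings (see \cite{BJ}) gives that $\V \cap \BS$ remains unbounded in $\VG$; hence $\P$ adds no dominating real and ${\b}^{\VG} \leq {\b}^{\V}$. Second, Corollary~\ref{cor:suslinspr} gives that $\P$ does not increase $\spr$, so ${\spr}^{\VG} \leq {\spr}^{\V}$. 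Applying Corollary~\ref{cor:main1} inside $\VG$ then yields
\begin{align*}
  {\cov}^{\ast}({\ZZZ}_{0})^{\VG} \leq {\left( \max\{\spr, \b\} \right)}^{\VG} \leq {\left( \max\{\spr, \b\} \right)}^{\V},
\end{align*}
and since ${\cov}^{\ast}({\ZZZ}_{0})^{\V} \leq {\left( \max\{\spr, \b\} \right)}^{\V}$ as well by Corollary~\ref{cor:main1} applied in $\V$, the iteration never pushes ${\cov}^{\ast}({\ZZZ}_{0})$ above the bound it already satisfied in the ground model, which is what is meant here by ``does not increase ${\cov}^{\ast}({\ZZZ}_{0})$''; when the ground model moreover satisfies $\spr = \b = {\cov}^{\ast}({\ZZZ}_{0})$ (for instance under $\CH$) this gives ${\cov}^{\ast}({\ZZZ}_{0})^{\VG} = {\cov}^{\ast}({\ZZZ}_{0})^{\V}$ outright.

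An alternative, more concrete route is to preserve the particular covering family of ${\ZZZ}_{0}$ produced in the proof of Theorem~\ref{thm:main1}: one fixes in $\V$ a tortuous coloring obtained from $\V \cap {\left( \Pset(\omega) \right)}^{\omega}$ via Lemma~\ref{lem:tortuous} and a family $B$ of interval partitions obtained from $\V \cap \BS$ via Lemma~\ref{lem:b}, forms the resulting family $\GGG \in \V$ of density-zero sets with $\lc \GGG \rc = {\left( \max\{\spr, \b\} \right)}^{\V}$, and checks that if some $X \in \cube$ in $\VG$ met every member of $\GGG$ in a finite set then, by the proof of Theorem~\ref{thm:main1}, either $\V \cap {\left( \Pset(\omega) \right)}^{\omega}$ would fail to be a promptly splitting family in $\VG$ or $\V \cap \BS$ would fail to be unbounded there; the latter is excluded by the preservation theorem above, and the former because every real added by a finite support iteration of Suslin c.c.c.\ posets already appears in a countable fragment of the iteration that is itself coded as a Suslin c.c.c.\ poset over $\V$, to which Theorem~\ref{thm:suslinprompt} applies. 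The only genuinely nontrivial input in either route is the preservation of unbounded families through the finite support iteration; this is the standard preservation theorem, and it is the step I would cite rather than reprove, the remainder being bookkeeping over Corollaries~\ref{cor:diagonalizez0}, \ref{cor:main1}, \ref{cor:suslinspr} and Theorem~\ref{thm:suslinprompt}.
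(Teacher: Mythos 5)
Your argument is correct and is essentially the paper's own: the first claim is exactly Corollary \ref{cor:diagonalizez0} combined with Theorem \ref{thm:suslinprompt}, and the second follows, as in the discussion preceding the corollary, from the standard preservation of unbounded families under finite support iteration (so $\b$ is not increased) together with Corollary \ref{cor:suslinspr} and Corollary \ref{cor:main1}. Your explicit gloss on what ``does not increase ${\cov}^{\ast}({\ZZZ}_{0})$'' means (never pushing it above the ground model bound $\max\{\b, \spr\}$) is a fair and slightly more careful reading of the same inference the paper makes.
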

An example of a Suslin c.c.c.\@ forcing which preserves all unbounded families is the Mathias forcing associated to an ${F}_{\sigma}$ filter (see Canjar~\cite{canjar1}).
So a consequence of Corollary \ref{cor:suslinbcovz_0} is that finite support iterations of Mathias forcings of ${F}_{\sigma}$ filters do not increase ${\cov}^{\ast}({\ZZZ}_{0})$.

The next result dualizes Corollary \ref{cor:main1}.
However we do not need any variant of $\rr$ because of the following fact, which says that any family of fewer than $\rr$ many members of $\cube$ can be simultaneously promptly split.
\begin{Lemma} \label{lem:rsplit}
 Suppose $\F \subset \cube$ is a family of size less than $\rr$.
 Then there exists a sequence $X = \seq{x}{k}{<}{\omega} \in {\left( \Pset(\omega) \right)}^{\omega}$ such that $X$ promptly splits $A$, for each $A \in \F$.
\end{Lemma}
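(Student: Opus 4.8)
The plan is to construct $X$ by a straightforward recursion on $k$, splitting at stage $k$ a suitably chosen family of fewer than $\rr$ many infinite sets. Recall that $\rr$ is the least size of a family $\RR \subset \cube$ such that no single $x \subset \omega$ has both $x \cap A$ and $A \setminus x$ infinite for every $A \in \RR$; equivalently, given any family $\G \subset \cube$ with $\lc \G \rc < \rr$, there is a single $x \subset \omega$ that \emph{splits} every member of $\G$, in the sense that $x \cap A$ and $A \setminus x$ are infinite for all $A \in \G$. Recall also the standard fact that $\rr \geq {\aleph}_{1}$, so that $\lc \F \rc < \rr$ implies ${2}^{n} \cdot \lc \F \rc < \rr$ for every $n \in \omega$ (this product equals $\lc \F \rc$ when $\F$ is infinite, and is a finite number $< {\aleph}_{1} \leq \rr$ when $\F$ is finite).

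I would build $x_0, x_1, \dots$ maintaining the inductive hypothesis $(\star_n)$: for every $A \in \F$ and every $\sigma \in {2}^{n + 1}$, the set $\left( {\bigcap}_{i < n + 1}{{x}^{\sigma(i)}_{i}} \right) \cap A$ is infinite. For the base case, use $\lc \F \rc < \rr$ to pick $x_0$ that splits every $A \in \F$; this is exactly $(\star_0)$. For the recursive step, suppose $x_0, \dots, x_n$ have been chosen so that $(\star_n)$ holds, and set $\G = \left\{ \left( {\bigcap}_{i < n + 1}{{x}^{\sigma(i)}_{i}} \right) \cap A : \sigma \in {2}^{n + 1} \wedge A \in \F \right\}$. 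By $(\star_n)$ every member of $\G$ is infinite, and $\lc \G \rc \leq {2}^{n + 1} \cdot \lc \F \rc < \rr$, so there is a single $x_{n + 1}$ splitting every member of $\G$. Given $\tau \in {2}^{n + 2}$ and $A \in \F$, write $\tau = {\sigma}^{\frown}\langle j \rangle$ with $\sigma \in {2}^{n + 1}$ and $j < 2$; then $\left( {\bigcap}_{i < n + 2}{{x}^{\tau(i)}_{i}} \right) \cap A = \left( \left( {\bigcap}_{i < n + 1}{{x}^{\sigma(i)}_{i}} \right) \cap A \right) \cap {x}^{j}_{n + 1}$ is infinite, since $x_{n + 1}$ splits $\left( {\bigcap}_{i < n + 1}{{x}^{\sigma(i)}_{i}} \right) \cap A \in \G$. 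Thus $(\star_{n + 1})$ holds.

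Setting $X = \seq{x}{k}{<}{\omega}$, the conjunction of all the $(\star_n)$ says precisely that $X$ promptly splits every $A \in \F$, which is the conclusion. There is no serious obstacle here; the only point requiring any care is the bookkeeping that the auxiliary family $\G$ split at each stage remains of cardinality $< \rr$ and consists of infinite sets — both of which follow immediately from the inductive hypothesis together with the fact that $\rr \geq {\aleph}_{1}$.
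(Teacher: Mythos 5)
Your proof is correct, and it takes a genuinely different (more direct) route than the paper. You run a recursion on the index $k$ of the sequence itself, maintaining at stage $n$ that all ${2}^{n+1}$ Boolean combinations ${\bigcap}_{i < n + 1}{{x}^{\sigma(i)}_{i}} \cap A$ are infinite, and at each step you invoke the dual characterization of $\rr$ (every family of fewer than $\rr$ infinite sets is simultaneously split by a single set) applied to the family of all current combinations intersected with members of $\F$; the cardinality bookkeeping ${2}^{n+1} \cdot \lc \F \rc < \rr$ is handled correctly. The paper instead recursively builds a pairwise disjoint sequence $\seq{y}{i}{\in}{\omega}$ such that each ${y}_{i}$ meets every $A \in \F$ infinitely (keeping the complement of the union so far infinite in each $A$ to sustain the recursion), and then converts this into a promptly splitting sequence by fixing an independent family $\seq{C}{k}{\in}{\omega}$ and setting ${x}_{k} = {\bigcup}_{i \in {C}_{k}}{{y}_{i}}$ — the same independent-family transfer used in Lemma \ref{lem:promptomegas}. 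Both arguments use the splitting property below $\rr$ countably many times; yours avoids the independent family altogether and verifies the prompt-splitting condition directly by induction, while the paper's version is modular with the partition-to-prompt-splitting machinery developed earlier in the paper. Either proof is complete and correct.
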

\begin{proof}
 If $\F$ is empty then any $X \in {\left( \Pset(\omega) \right)}^{\omega}$ vacuously satisfies the conclusion of the lemma.
 So we may assume that $\F$ is non-empty.
 We define a sequence $\seq{y}{i}{\in}{\omega}$ as follows.
 Use the assumption that $\F$ has size less than $\rr$ to find ${y}_{0} \subset \omega$ such that both ${y}_{0} \cap A$ and $\left( \omega \setminus {y}_{0} \right) \cap A$ are infinite, for each $A \in \F$.
 Next suppose that for some $n \in \omega$, a sequence $\seq{y}{i}{\leq}{n} \in {\Pset(\omega)}^{n + 1}$ is given such that both ${y}_{n} \cap A$ and $\left( \omega \setminus \left( {\bigcup}_{i \leq n}{{y}_{i}} \right) \right) \cap A$ are infinite, for each $A \in \F$.
 As $\F$ is non-empty, $\left( \omega \setminus \left( {\bigcup}_{i \leq n}{{y}_{i}} \right) \right)$ is an infinite subset of $\omega$, and $\GGG = \left\{\left( \omega \setminus \left( {\bigcup}_{i \leq n}{{y}_{i}} \right) \right) \cap A: A \in \F\right\}$ is a collection of infinite subsets of $\left( \omega \setminus \left( {\bigcup}_{i \leq n}{{y}_{i}} \right) \right)$ of size less than $\rr$.
 So we can find ${y}_{n + 1} \subset \left( \omega \setminus \left( {\bigcup}_{i \leq n}{{y}_{i}} \right) \right)$ such that both ${y}_{n + 1} \cap B$ and $\left( \left( \omega \setminus \left( {\bigcup}_{i \leq n}{{y}_{i}} \right) \right) \setminus {y}_{n + 1} \right) \cap B$ are infinite, for each $B \in \GGG$.
 It is clear that $\seq{y}{i}{\leq}{n + 1}$ satisfies the inductive hypothesis.
 This concludes the construction of $\seq{y}{i}{\in}{\omega}$.
 Note that $\seq{y}{i}{\in}{\omega}$ is a pairwise disjoint sequence.
 Fix a independent family $\seq{C}{k}{\in}{\omega}$ of subsets of $\omega$.
 For each $k \in \omega$, define ${x}_{k} = {\bigcup}_{i \in {C}_{k}}{{y}_{i}}$.
 This is a subset of $\omega$, and we claim that $\seq{x}{k}{\in}{\omega}$ promptly splits $A$, for each $A \in \F$.
 Indeed, fix $A \in \F$.
 Suppose $n \in \omega$ and $\sigma \in {2}^{n + 1}$.
 Then ${\bigcap}_{k < n + 1}{{C}^{\sigma(k)}_{k}} \neq 0$.
 Let $i \in {\bigcap}_{k < n + 1}{{C}^{\sigma(k)}_{k}}$.
 Since ${y}_{i} \subset {\bigcap}_{k < n + 1}{{x}^{\sigma(k)}_{k}}$ and ${y}_{i} \cap A$ is infinite, $\left( {\bigcap}_{k < n + 1}{{x}^{\sigma(k)}_{k}} \right) \cap A$ is infinite as well, as needed.
\end{proof}
\begin{Lemma} \label{lem:lessd}
 Let $\kappa < \d$ be a cardinal.
 Suppose $\seq{I}{\alpha}{<}{\kappa}$ is a sequence of interval partitions.
 Then there exists an interval partition $J$ such that for each $\alpha < \kappa$, $\existsinf n \in \omega \exists k > n\[{I}_{\alpha, k} \subset {J}_{n}\]$.
\end{Lemma}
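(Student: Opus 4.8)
The plan is to reduce the statement to a single application of the definition of $\d$ by encoding the entire sequence $\seq{I}{\alpha}{<}{\kappa}$ of interval partitions as a family of fewer than $\d$ functions in $\BS$, and then using a dominating function to build $J$. The key observation is that an interval partition $I = \seq{i}{n}{\in}{\omega}$ is naturally coded by the strictly increasing function $n \mapsto i_{n}$ (or by its "next endpoint" function), and the property "$\existsinf n\, \exists k > n\,[I_{\alpha,k} \subset J_{n}]$" is a statement about how fast the endpoints of $J$ grow relative to the endpoints of $I_{\alpha}$.

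\emph{First I would} define, for each $\alpha < \kappa$, a function $g_{\alpha} \in \BS$ that measures how far one must look in the partition $I_{\alpha}$ to find a block strictly beyond a given point: writing $I_{\alpha} = \langle i_{\alpha,n} : n \in \omega\rangle$, set $g_{\alpha}(m)$ to be the right endpoint $i_{\alpha, k+1}$ where $k$ is least with $i_{\alpha,k} \geq m$; equivalently $g_{\alpha}(m)$ is large enough that the interval of $I_{\alpha}$ containing the point $m$ (and the next one after it) is entirely below $g_{\alpha}(m)$. Since $\kappa < \d$, the family $\{g_{\alpha} : \alpha < \kappa\}$ is not dominating, which is the wrong direction; instead I want to use that $\{g_{\alpha}\}$ can be \emph{escaped} infinitely often — more precisely, I should build a single fast-growing $h \in \BS$ and use $\kappa < \d$ to get, for each $\alpha$, infinitely many $n$ where $h$ outpaces $g_{\alpha}$ in the appropriate sense. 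The cleanest route: let $h$ dominate all the $g_{\alpha}$ is impossible, so instead I apply the definition of $\d$ contrapositively. Fix any interval partition $K = \seq{k}{n}{\in}{\omega}$ to be refined; define $\tilde g_{\alpha}(n) = $ the least $m$ such that some block $I_{\alpha,j}$ with $j > n$ lies inside $[k_n, k_m)$ — no; let me instead directly define $J$ from a dominating family.

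\emph{The actual construction:} Apply the definition of $\d$ to get that there is \emph{no} dominating family of size $\kappa$, hence $\{g_{\alpha} : \alpha < \kappa\}$ fails to dominate some $f \in \BS$; but I need this for all $\alpha$ simultaneously via a single witness, which is exactly what "not dominating" gives: there is $f$ with $\forall \alpha\, \existsinf m\,[f(m) > g_{\alpha}(m)]$. Now I recursively build $J = \seq{j}{n}{\in}{\omega}$ so that the gaps $j_{n+1} - j_n$ grow fast enough that whenever $f(m) > g_{\alpha}(m)$ for some $m$ inside block $J_n$, the corresponding $I_{\alpha}$-block caught by this inequality fits inside $J_n$. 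Concretely, set $j_0 = 0$ and $j_{n+1} = f(j_n) + j_n + 1$ (or any value exceeding $f$ on all of $J_n$ together with a trivial increment). Then fix $\alpha < \kappa$: for infinitely many $m$ we have $f(m) > g_{\alpha}(m)$; given such an $m$, let $n$ be the index with $m \in J_n$, so $j_{n+1} > f(m) > g_{\alpha}(m) \geq i_{\alpha, k+1}$ where $k$ is least with $i_{\alpha,k} \geq j_n$; since $j_n \leq m$ and $k$ is least with $i_{\alpha,k} \geq j_n$, the block $I_{\alpha,k}$ (and we can arrange $k > n$ by thinning, since the $m$'s are infinite and $I_{\alpha}$-blocks eventually have large index) satisfies $I_{\alpha,k} = [i_{\alpha,k}, i_{\alpha,k+1}) \subset [j_n, j_{n+1}) = J_n$. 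This yields infinitely many such $n$, giving the conclusion. \emph{The main obstacle} will be the bookkeeping to guarantee the caught $I_{\alpha}$-block has index $k > n$ as literally required (rather than merely $k \geq n$ or no constraint): this is handled by noting that for each $\alpha$ the indices $k$ arising tend to infinity as $m \to \infty$ (since the blocks of $I_\alpha$ have increasing left endpoints), and by possibly passing to a sparser $J$ or reindexing $\alpha$'s escape points, so only finitely many are lost. I would also double-check the degenerate case $\kappa = 0$, where any interval partition works vacuously, and handle $\kappa$ finite uniformly with the infinite case since "$<\d$" already covers it.
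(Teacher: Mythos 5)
Your overall strategy---code the partitions by functions, use that fewer than $\d$ functions cannot be dominating, and build $J$ recursively from an escaping function $f$---is the same in spirit as the paper's, but the central step of your verification has a genuine gap. You take an escape point $m$ with $f(m) > g_{\alpha}(m)$, let $n$ be the index with $m \in J_{n}$, and assert $j_{n+1} > f(m)$. With your recursion $j_{n+1} = f(j_{n}) + j_{n} + 1$ this is only guaranteed when $m = j_{n}$; for other $m \in J_{n}$ the value $f(m)$ can be far larger than $j_{n+1}$, and you have no control over where the escape points fall (in particular you cannot arrange them to be left endpoints of $J$, since $J$ is constructed from $f$ itself). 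Your fallback, choosing $j_{n+1}$ to exceed $f$ on all of $J_{n}$, is circular ($J_{n} = [j_{n}, j_{n+1})$ depends on $j_{n+1}$) and in general impossible: already for $f(m) = m + 2$ there is no $j$ with $f(m) < j$ for all $m < j$. Replacing $f$ by a monotone majorant only yields $I_{\alpha, k} \subset [j_{n}, j_{n+2})$, a union of \emph{two} consecutive blocks, and the parity needed to repair this by coarsening $J$ depends on $\alpha$, so that patch fails too. A second, hand-waved point is the requirement $k > n$: since $I_{\alpha}$ may be much coarser than $J$, the caught index $k$ can lie far below $n$, and neither ``thinning $J$'' nor ``only finitely many are lost'' is justified uniformly in $\alpha$; this constraint has to be built into the functions themselves rather than recovered afterwards.

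The paper circumvents both problems by comparing endpoints \emph{indexwise} rather than pointwise. For each $\alpha$ and each shift $l$ it uses $f_{\alpha, l}(n) = i_{\alpha, n+l}$ (still fewer than $\d$ functions), takes $g$ with $\existsinf n \, [g(n) > f_{\alpha,l}(n)]$ for all $\alpha, l$, and sets $j_{n+1} = \max\{j_{n}+1, g(n+1)\}$, so that $j_{n} \geq g(n)$. Then, given $N$, it chooses the shift $l \geq 1$ so that $j_{N+1} \leq i_{\alpha, N+1+l}$ and takes the \emph{least} $M \geq N+1$ with $j_{M} > i_{\alpha, M+l}$; minimality gives $j_{M-1} \leq i_{\alpha, M-1+l} < i_{\alpha, M+l} < j_{M}$, i.e.\ $I_{\alpha, M-1+l} \subset J_{M-1}$ with caught index $(M-1)+l > M-1 > N$. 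Some device of this kind (the shift plus minimal-counterexample trick, or an equivalent realignment of the escape points with the endpoints of $J$) is exactly what your write-up is missing, so as it stands the proof does not go through.
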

\begin{proof}
 This is very similar to Lemma \ref{lem:b}.
 For each $\alpha < \kappa$ and $l \in \omega$, define ${f}_{\alpha, l}(n) = {i}_{\alpha, (n \; + \; l)}$, for each $n \in \omega$.
 Now $\{{f}_{\alpha, l}: \alpha < \kappa \wedge l < \omega\}$ is a family of functions of size less than $\d$.
 So there exists $g \in \BS$ such that for each $\alpha < \kappa$ and $l < \omega$, $\existsinf n \in \omega\[{f}_{\alpha, l}(n) < g(n)\]$.
 Define $J$ as follows.
 Put ${j}_{0} = 0$ and suppose ${j}_{n} \in \omega$ is given for some $n \in \omega$.
 Define ${j}_{n + 1} = \max\{{j}_{n} + 1, g(n + 1)\}$.
 It is clear that $J$ is an interval partition.
 We check that it is as required.
 So fix $\alpha < \kappa$ and $N \in \omega$.
 We will find $n > N$ and $k > n$ such that ${I}_{\alpha, k} \subset {J}_{n}$.
 Fix $m > N + 1$ such that ${i}_{\alpha, m} \geq {j}_{N + 1}$ and let $l = m - N - 1$.
 Note $l \geq 1$.
 By choice of $g$, there exists $M \geq N + 1$ such that $g(M) > {i}_{\alpha, (M \; + \; l)}$.
 Now ${j}_{M} \geq g(M) > {i}_{\alpha, (M \; + \; l)}$ because $M > 0$.
 So we conclude that there exists $M$ with the property that $M \geq N + 1$ and ${j}_{M} > {i}_{\alpha, (M \; + \; l)}$.
 Let $M$ be minimal with this property.
 Note that $N + 1$ does not have this property, so $M > N + 1$.
 Put $n = M - 1$ and $k = n + l$.
 It follows that $n \geq N + 1$ and that ${j}_{n} \leq {i}_{\alpha, k} < {i}_{\alpha, k + 1} < {j}_{n + 1}$, and so ${I}_{\alpha, k} \subset {J}_{n}$.
 Since $n > N$ and $k > n$, we are done.
\end{proof}
\begin{Theorem} \label{thm:main2}
 $\min\{\d, \rr\} \leq {\non}^{\ast}({\ZZZ}_{0})$.
\end{Theorem}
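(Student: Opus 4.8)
The plan is to dualize the proof of Theorem~\ref{thm:main1}. Fix a family $\F \subset \cube$ with $\lc \F \rc < \min\{\d, \rr\}$; it suffices to produce a single $Z \in {\ZZZ}_{0}$ with $\lc X \cap Z \rc = {\aleph}_{0}$ for every $X \in \F$, since then $\F$ is not a witness for ${\non}^{\ast}({\ZZZ}_{0})$. As in the proof of Theorem~\ref{thm:main1}, I would fix once and for all an interval partition $J$ such that for each $n \in \omega$ there is ${l}_{n} \in \omega$ with ${l}_{n} > 0$, ${l}_{n} \geq n$, and $\lc {J}_{n} \rc = {2}^{{l}_{n}}$, together with $\bar{A}$ and ${\F}_{J, \bar{A}}$ as in Definition~\ref{def:FJ}. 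The set $Z$ will have the form ${Z}_{I, J, {f}_{C}}$ for a function $C$ as in Definition~\ref{def:fc} and an interval partition $I$ still to be chosen; that $Z$ then has density $0$ is immediate from Lemma~\ref{lem:Zdensity}.

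To build $C$, I would first extract the ``address data'' of the members of $\F$. For $X \in \F$ put ${L}^{X} = \{n \in \omega : {J}_{n} \cap X \neq 0\}$, which is infinite; for each $n \in {L}^{X}$ pick ${\tau}^{X}_{n} \in {2}^{{l}_{n}}$ with ${A}_{n, {\tau}^{X}_{n}} \cap X \neq 0$ (possible since the sets ${A}_{n, \tau}$, $\tau \in {2}^{{l}_{n}}$, partition ${J}_{n}$), and let ${w}^{X}_{n} \in {2}^{\omega}$ be ${\tau}^{X}_{n}$ followed by zeros. By compactness of ${2}^{\omega}$ there are ${A}^{X} \in {\[{L}^{X}\]}^{\omega}$ and ${x}^{X} \in {2}^{\omega}$ with $\langle {w}^{X}_{n} : n \in {A}^{X} \rangle$ converging to ${x}^{X}$. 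Since $\lc \{{A}^{X} : X \in \F\} \rc \leq \lc \F \rc < \rr$, Lemma~\ref{lem:rsplit} provides a single $\seq{x}{i}{\in}{\omega} \in {\left( \Pset(\omega) \right)}^{\omega}$ that promptly splits ${A}^{X}$ for every $X \in \F$; define $C(n) \in {2}^{{l}_{n}}$ by $C(n)(l) = 0$ iff $n \in {x}_{l}$ (for $l < {l}_{n}$), and put $f = {f}_{C} \in {\F}_{J, \bar{A}}$. The key point is that $f'' X$ is unbounded in $\omega$ for every $X \in \F$. Indeed, fix $X$ and $j \geq 1$; by convergence there is $N$ with ${w}^{X}_{k} \restrict j = {x}^{X} \restrict j$ for all $k \in {A}^{X}$ with $k \geq N$. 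Setting $\sigma = {x}^{X} \restrict j \in {2}^{j}$, prompt splitting of ${A}^{X}$ makes $\left( {\bigcap}_{i < j}{{x}^{\sigma(i)}_{i}} \right) \cap {A}^{X}$ infinite, so I can pick $k$ in it with $k \geq \max\{N, j\}$; then ${l}_{k} \geq k \geq j$, so ${\tau}^{X}_{k} \restrict j = {w}^{X}_{k} \restrict j = \sigma = C(k) \restrict j$, and hence, by clause~(2) of Lemma~\ref{lem:tree}, Remark~\ref{rem:FJ}, and Definition~\ref{def:fc}, ${A}_{k, {\tau}^{X}_{k}} \subset {A}_{k, {\tau}^{X}_{k} \restrict j} = {A}_{k, C(k) \restrict j} = \{ m \in {J}_{k} : f(m) \geq j \}$. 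Since ${A}_{k, {\tau}^{X}_{k}} \cap X \neq 0$, some $m \in X$ has $f(m) \geq j$, proving unboundedness.

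It remains to choose $I$, using $\lc \F \rc < \d$. For each $X \in \F$, unboundedness of $f'' X$ lets me repeat the construction of the interval partition $K$ carried out in the proof of Lemma~\ref{lem:bounded} to obtain an interval partition ${K}^{X} = \seq{k}{n}{\in}{\omega}$ with $\forall n \in \omega \exists k \in {K}^{X}_{n} \exists m \in X \cap {J}_{k}\[f(m) > n\]$. As $\lc \{{K}^{X} : X \in \F\} \rc \leq \lc \F \rc < \d$, Lemma~\ref{lem:lessd} yields a single interval partition $I$ such that $\existsinf n \in \omega \exists k > n\[{K}^{X}_{k} \subset {I}_{n}\]$ for every $X \in \F$. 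Put $Z = {Z}_{I, J, f}$, which has density $0$ by Lemma~\ref{lem:Zdensity}. Fixing $X \in \F$: for each $l$ for which some $n > l$ satisfies ${K}^{X}_{n} \subset {I}_{l}$, choosing $k \in {K}^{X}_{n} \subset {I}_{l}$ and $m \in X \cap {J}_{k}$ with $f(m) > n > l$ gives $m \in X \cap {Z}_{I, J, f, l}$; since this occurs for infinitely many $l$ and the sets ${Z}_{I, J, f, l}$ ($l \in \omega$) are pairwise disjoint, $X \cap Z$ is infinite. Thus $Z$ witnesses that $\F$ is not a witness for ${\non}^{\ast}({\ZZZ}_{0})$, and as $\F$ was arbitrary of size $< \min\{\d, \rr\}$, we conclude $\min\{\d, \rr\} \leq {\non}^{\ast}({\ZZZ}_{0})$.

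The step I expect to be the main obstacle is locating the right dual of the limit/compactness argument in the proof of Theorem~\ref{thm:main1}: one has to realize that the single prompt-splitting sequence furnished by Lemma~\ref{lem:rsplit} should be arranged to promptly split the \emph{address sequences} ${A}^{X}$, not the sets $X$ themselves, and that convergence of $\langle {w}^{X}_{n} : n \in {A}^{X} \rangle$ is precisely what converts ``$\seq{x}{i}{\in}{\omega}$ promptly splits ${A}^{X}$'' into ``$f$ assumes arbitrarily large values on $X$''. Given that insight, the two applications of $\lc \F \rc < \rr$ and $\lc \F \rc < \d$ and the reuse of the apparatus of Lemmas~\ref{lem:bounded} and~\ref{lem:Zdensity} are routine.
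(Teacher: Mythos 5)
Your proposal is correct and follows essentially the same route as the paper's proof: extract the address sequences and convergent subsequences, apply Lemma \ref{lem:rsplit} to those index sets to build $C$ and $f_{C}$, show $f_{C}$ is unbounded on each member of the family, and then use Lemma \ref{lem:lessd} to pick the interval partition $I$ so that $Z_{I, J, f_{C}}$ works. The only (harmless) difference is cosmetic: you verify unboundedness via the identity $A_{k, C(k)\restrict j} = \{m \in J_{k} : f_{C}(m) \geq j\}$ from Remark \ref{rem:FJ}, where the paper argues element-wise with the $\sigma_{n, l'}$ of Definition \ref{def:fc}.
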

\begin{proof}
 Let $\GGG$ be any family of infinite subsets of $\omega$ with $\lc \GGG \rc < \min\{\d, \rr\}$.
 We aim to produce a $Z \in {\ZZZ}_{0}$ such that $\forall B \in \GGG\[\lc B \cap Z \rc = {\aleph}_{0}\]$.
 Fix any interval partition $J$ such that for each $n \in \omega$, there exists ${l}_{n} \in \omega$ such that ${l}_{n} > 0$, ${l}_{n} \geq n$, and $\lc {J}_{n} \rc = {2}^{{l}_{n}}$.
 Let $\bar{A}$ be as in Definition \ref{def:FJ} with respect to $J$.
 Fix $B \in \GGG$.
 Define ${L}_{B} = \{n \in \omega: {J}_{n} \cap B \neq 0 \}$.
 As $B$ is infinite, ${L}_{B}$ is infinite.
 For each $n \in {L}_{B}$, let ${\tau}^{B}_{n} \in {2}^{{l}_{n}}$ be such that ${A}_{n, {\tau}^{B}_{n}} \cap B \neq 0$.
 For each $n \in {L}_{B}$, define ${x}^{B}_{n}$ to be the element of ${2}^{\omega}$ such that ${x}^{B}_{n} \restrict {l}_{n} = {\tau}^{B}_{n}$ and $\forall l \geq {l}_{n}\[{x}^{B}_{n}(l) = 0\]$.
 Now we can find ${U}_{B} \in {\[{L}_{B}\]}^{\omega}$ and ${x}^{B} \in {2}^{\omega}$ such that $\langle {x}^{B}_{n}: n \in {U}_{B} \rangle$ converges to ${x}^{B}$.
 Unfix $B$ and consider $\F = \{{U}_{B}: B \in \GGG\}$.
 Then $\F \subset \cube$ and $\lc \F \rc < \rr$.
 Therefore by Lemma \ref{lem:rsplit}, there exists a sequence $\seq{z}{k}{<}{\omega} \in {\left( \Pset(\omega) \right)}^{\omega}$ which promptly splits ${U}_{B}$, for each $B \in \GGG$.
 Now define $C: \omega \rightarrow {2}^{< \omega}$ as follows.
 For $n \in \omega$, $C(n)$ is the function from ${l}_{n}$ to $2$ such that for each $k < {l}_{n}$, $C(n)(k) = 0$ iff $n \in {z}_{k}$.
 $C$ satisfies the conditions of Definition \ref{def:fc}.
 Therefore ${f}_{C} \in {\F}_{J, \bar{A}}$, where ${f}_{C}$ is defined in Definition \ref{def:fc}
 
 Fix any $B \in \GGG$ and $l \in \omega$.
 We will produce a $y \in B$ such that ${f}_{C}(y) \geq l$.
 Since $\langle {x}^{B}_{n}: n \in {U}_{B} \rangle$ converges to ${x}^{B}$, there exists $N \in \omega$ such that $\forall n \in {U}_{B}\[n \geq N \implies {x}^{B}_{n} \restrict \left( l + 1 \right) = {x}^{B} \restrict \left( l + 1 \right) \]$.
 Also since $\seq{z}{k}{\in}{\omega}$ promptly splits ${U}_{B}$, $\left( {\bigcap}_{k < l + 1}{{z}^{{x}^{B}(k)}_{k}} \right) \cap {U}_{B}$ is infinite.
 Choose $n \in \left( {\bigcap}_{k < l + 1}{{z}^{{x}^{B}(k)}_{k}} \right) \cap {U}_{B}$ such that $n \geq N$ and $n > l$.
 Note that ${l}_{n} \geq n > l$ and that for each $k < l + 1$, $C(n)(k) = 0$ iff $n \in {z}_{k}$ iff ${x}^{B}(k) = 0$. 
 Thus $C(n) \restrict \left( l + 1 \right) = {x}^{B} \restrict \left( l + 1 \right) = {x}^{B}_{n} \restrict \left( l + 1 \right) = {\tau}^{B}_{n} \restrict \left( l + 1 \right)$.
 For notational convenience, write $\sigma = {\tau}^{B}_{n} \restrict \left( l + 1 \right)$.
 Since ${A}_{n, {\tau}^{B}_{n}} \subset {A}_{n, \sigma}$ and since ${A}_{n, {\tau}^{B}_{n}} \cap B \neq 0$, we can choose a $y \in B \cap {A}_{n, \sigma}$.
 We claim that ${f}_{C}(y) \geq l$.
 By the definition of ${f}_{C}$, it suffices to prove that for each $l' < l$, $y \notin {A}_{n, {\sigma}_{n, l'}}$, where ${\sigma}_{n, l'}$ is defined in Definition \ref{def:fc} (with respect to $C$).
 To see this, fix any $l' < l$.
 Put $\eta = \sigma \restrict \left( l' + 1 \right)$.
 Then $\eta(l') = C(n)(l') \neq 1 - C(n)(l') = {\sigma}_{n, l'}(l')$.
 Thus ${\sigma}_{n, l'}, \eta \in {2}^{l' + 1}$ and $\eta \neq {\sigma}_{n, l'}$.
 Therefore ${A}_{n, {\sigma}_{n, l'}} \cap {A}_{n, \eta} = 0$.
 On the other hand ${A}_{n, \sigma} \subset {A}_{n, \eta}$ because $\eta \subset \sigma$.
 Hence $y \in {A}_{n, \eta}$, whence $y \notin {A}_{n, {\sigma}_{n, l'}}$ as claimed.
 
 The argument of the previous paragraph shows that ${f}_{C}$ is unbounded on every $B \in \GGG$.
 Now for each $B \in \GGG$ define an interval partition ${I}_{B}$ as follows.
 Let ${i}_{B, 0} = 0$ and suppose that for some $n \in \omega$, ${i}_{B, n} \in \omega$ is given.
 Define $M = \max\left( \{{f}_{C}(y) + 1: y \in {\bigcup}_{m \leq {i}_{B, n}}{{J}_{m}} \} \cup \{n\} \right)$.
 Let $y \in B$ be such that ${f}_{C}(y) \geq M$.
 Let $m \in \omega$ be such that $y \in {J}_{m}$.
 Note that $m > {i}_{B, n}$.
 Define ${i}_{B, n + 1} = m + 1$.
 This concludes the definition of ${I}_{B}$.
 Note that for each $n \in \omega$, $\exists m \in {I}_{B, n} \exists y \in {J}_{m} \cap B\[{f}_{C}(y) \geq n\]$.
 Now $\{{I}_{B}: B \in \GGG\}$ is a family of interval partitions of size less than $\d$.
 Therefore by Lemma \ref{lem:lessd}, there is an interval partition $I$ such that for each $B \in \GGG$, $\existsinf k \in \omega \exists n > k\[{I}_{B, n} \subset {I}_{k}\]$.
 Let $Z = {Z}_{I, J, {f}_{C}}$.
 Then $Z$ has density $0$ because ${f}_{C} \in {\F}_{J, \bar{A}}$.
 To complete the proof of the theorem, we show that $\lc Z \cap B \rc = {\aleph}_{0}$, for every $B \in \GGG$.
 To this end, fix any $B \in \GGG$.
 Then $Y = \{k \in \omega: \exists n > k\[{I}_{B, n} \subset {I}_{k}\]\}$ is infinite by choice of $I$.
 Consider any $k \in Y$ and let $n > k$ be such that ${I}_{B, n} \subset {I}_{k}$.
 There exist $m \in {I}_{B, n}$ and $y \in {J}_{m} \cap B$ with ${f}_{C}(y) \geq n$.
 By definition of ${Z}_{I, J, {f}_{C}, k}$, $y \in B \cap {Z}_{I, J, {f}_{C}, k}$.
 Thus $B \cap {Z}_{I, J, {f}_{C}, k} \neq 0$, for every $k \in Y$.
 When $k < k' < \omega$, then ${Z}_{I, J, {f}_{C}, k} \cap {Z}_{I, J, {f}_{C}, k'} = 0$.
 It follows that $B \cap Z$ is infinite, as claimed.
\end{proof}
We point out here that it is provable in $\ZFC$ that $\min\{\d, \rr\} = \min\{\d, \uu\}$.
We do not know if this observation was already known, however a closely related observation was made by Mildenberger who showed that $\rr \geq \min\{\uu, \mathfrak{g}\}$.
More details about Mildenberger's work may be found on Page 452 of \cite{blasssmall}.
\begin{Lemma} \label{lem:dur}
 $\min\{\d, \rr\} = \min\{\d, \uu\}$.
\end{Lemma}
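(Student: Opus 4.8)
The plan is to combine the well-known inequality $\rr \leq \uu$ with its less obvious companion $\rr \geq \min\{\d, \uu\}$; together these yield the Lemma. The inequality $\rr \leq \uu$ holds because a base of a free ultrafilter $\U$ is a reaping family: given $a \in \cube$, one of $a$ and $\omega \setminus a$ lies in $\U$ and hence almost contains some member of the base, which is infinite because $\U$ is free. Consequently $\min\{\d, \rr\} \leq \min\{\d, \uu\}$. For the reverse inequality $\min\{\d, \uu\} \leq \min\{\d, \rr\}$ it suffices to treat the case $\rr < \d$, where, using $\rr \leq \uu$ once more, it reduces to the single assertion that $\rr < \d$ implies $\uu \leq \rr$. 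So the entire content of the Lemma is this implication.

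To prove it, I would fix a reaping family $\mathcal{R} = \langle R_{\xi} : \xi < \rr \rangle \subseteq \cube$, assume $\rr < \d$, and manufacture a free ultrafilter on $\omega$ with a base of size $\rr$. It is enough to produce a centered subfamily $\mathcal{R}' \subseteq \mathcal{R}$ that is still a reaping family. Indeed, the filter $\U$ generated by such an $\mathcal{R}'$ is proper, has a base of size $\leq \rr$, and is in fact an ultrafilter: for any $X \subseteq \omega$ one may choose $R \in \mathcal{R}'$ with $R \: {\subset}^{\ast} \: X$ or $R \: {\subset}^{\ast} \: \omega \setminus X$, and since $R \in \U$ and $\U$ contains every cofinite set, whichever of $X$ and $\omega \setminus X$ almost contains $R$ belongs to $\U$. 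The subfamily $\mathcal{R}'$ would be built by a recursion of length $\rr$: at stage $\xi$ one has a centered set $\mathcal{R}'_{\xi}$ of size at most $\lc \xi \rc + \aleph_{0}$ and decides whether to put $R_{\xi}$ into $\mathcal{R}'$, keeping the family centered by declining to add any set that is not positive over the part of the filter committed so far.

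The main obstacle -- and the only place where the hypothesis $\rr < \d$ is used -- is to verify that the resulting $\mathcal{R}' = \bigcup_{\xi < \rr} \mathcal{R}'_{\xi}$ is still a reaping family. The difficulty is that ``$\mathcal{R}'$ is reaping'' quantifies over all of $\Pset(\omega)$, while the recursion has only $\rr$ stages, so one cannot diagonalize against each subset of $\omega$ one at a time. The plan for getting around this is to argue by contradiction: if some $X$ were split by every member of $\mathcal{R}'$, then the sets of $\mathcal{R}$ that the recursion discarded, together with the finite intersections from the $\mathcal{R}'_{\xi}$ that witnessed why each was discarded, can be encoded by a family of fewer than $\d$ interval partitions; by Lemma \ref{lem:lessd} there is then a single interval partition coarsening all of them, and feeding this back into the reaping property of $\mathcal{R}$ yields a member of $\mathcal{R}$ that the recursion ought to have retained, a contradiction. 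This is the mechanism by which a reaping family of size $< \d$ ``reaps uniformly enough'' to generate an ultrafilter, and it is patterned on Mildenberger's proof of $\rr \geq \min\{\uu, \mathfrak{g}\}$; carrying out the discard-bookkeeping so that the obstructions are genuinely coded by fewer than $\d$ interval partitions is the technical heart of the argument.
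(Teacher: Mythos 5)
Your easy direction ($\rr \leq \uu$) and your reduction of the lemma to the implication ``$\rr < \d$ implies $\uu \leq \rr$'' are both correct, but everything after that is a plan rather than a proof, and the plan's load-bearing step is missing. You need (i) that the greedy recursion can be run so that $\mathcal{R}'$ is centered with all finite intersections infinite, and, much more seriously, (ii) that the surviving family $\mathcal{R}'$ is still reaping. For (ii) you only say that the discarded sets, ``together with the finite intersections \dots that witnessed why each was discarded, can be encoded by a family of fewer than $\d$ interval partitions'' and that Lemma \ref{lem:lessd} then yields a contradiction; but you never specify what those interval partitions are, what property of them survives the single coarsening that Lemma \ref{lem:lessd} provides, or how that clashes with the reaping property of $\mathcal{R}$ --- and you yourself flag this as ``the technical heart.'' As written the argument cannot be checked or completed. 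Note also that the intermediate claim you reduce to --- that any reaping family of size $\rr < \d$ contains a centered subfamily which is still reaping --- is itself a strong assertion (it would place an ultrafilter base inside an arbitrary reaping family), and nothing in the sketch makes it plausible; the target implication ``$\rr < \d \Rightarrow \uu \leq \rr$'' is a genuine theorem (essentially due to Aubrey), and the known proofs do not proceed by thinning a given reaping family.

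By contrast, the paper's proof never constructs an ultrafilter, which is precisely how it avoids the hard part. It sets $\kappa = \min\{\d, \rr\}$, assumes $\kappa < \min\{\d, \uu\}$, and shows that any family $\{X_\xi : \xi < \kappa\} \subseteq \cube$ is split by a single set: for each finite $u \subseteq \kappa$ it picks an interval partition all of whose blocks meet every $X_\xi$ with $\xi \in u$; since $\kappa < \d$, Lemma \ref{lem:lessd} gives one interval partition $J$ infinitely many of whose blocks absorb blocks of each of these; the sets $A_u = \{ n \in \omega : X_\xi \cap J_n \neq \emptyset \ \text{for all} \ \xi \in u \}$ generate a non-principal filter with at most $\kappa$ generators, which, as $\kappa < \uu$, is not an ultrafilter; taking $B$ undecided by this filter, the set $Y = \bigcup_{n \in B} J_n$ splits every $X_\xi$. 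Hence $\rr > \kappa$, contradicting $\kappa = \min\{\d,\rr\} < \d$. So $\uu$ enters only negatively (``a $\kappa$-generated filter is not ultra''), and no small ultrafilter base is ever produced. To salvage your route you would essentially have to reprove Aubrey's theorem; the cheaper fix is to aim, as the paper does, at the statement $\rr \geq \min\{\d, \uu\}$ and extract the splitting set from a small filter of interval-hitting sets.
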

\begin{proof}
 It is well-known (see \cite{blasssmall}) that $\rr \leq \uu$.
 Therefore $\min\{\d, \rr\} \leq \min\{\d, \uu\}$.
 We will prove that $\min\{\d, \uu\} \leq \min\{\d, \rr\}$.
 Let $\kappa = \min\{\d, \rr\}$ and assume for a contradiction that $\kappa < \min\{\d, \uu\}$.
 We argue that $\kappa < \rr$.
 Let $\{{X}_{\xi}: \xi < \kappa\}$ be any family of elements of $\cube$.
 Clearly $\lc {\[\kappa\]}^{< \omega} \rc = \kappa$.
 So let $\seq{u}{\alpha}{<}{\kappa}$ enumerate ${\[\kappa\]}^{< \omega}$.
 For each $\alpha < \kappa$ choose an interval partition ${I}_{\alpha}$ such that $\forall n \in \omega \forall \xi \in {u}_{\alpha}\[{X}_{\xi} \cap {I}_{\alpha, n} \neq 0\]$.
 Since $\kappa < \min\{\d, \uu\} \leq \d$, Lemma \ref{lem:lessd} applies and implies that there is an interval partition $J$ such that for each $\alpha < \kappa$, $\existsinf n \in \omega \exists k > n\[{I}_{\alpha, k} \subset {J}_{n}\]$.
 Now for each $\alpha < \kappa$ define ${A}_{\alpha}$ to be $\{n \in \omega: \forall \xi \in {u}_{\alpha}\[{X}_{\xi} \cap {J}_{n} \neq 0\]\}$.
 Each ${A}_{\alpha}$ is infinite by the choice of $J$.
 Define $\F = \{B \in \Pset(\omega): \exists \alpha < \kappa\[{A}_{\alpha} \; {\subset}^{\ast} \; B\]\}$.
 We check that $\F$ is a non-principal filter on $\omega$.
 First each element of $\F$ is infinite.
 Next, if $B \in \F$ and $B \subset C \subset \omega$, then $C \in \F$.
 Finally suppose that $B, C \in \F$.
 Let $\alpha, \beta < \kappa$ be such that ${A}_{\alpha} \; {\subset}^{\ast} \; B$ and ${A}_{\beta} \; {\subset}^{\ast} \; C$.
 Let $\gamma < \kappa$ be such that ${u}_{\gamma} = {u}_{\alpha} \cup {u}_{\beta}$.
 Then ${A}_{\gamma} \; {\subset}^{\ast} \; B \cap C$, showing that $B \cap C \in \F$.
 This checks that $\F$ is a non-principal filter on $\omega$.
 Since $\kappa < \min\{\d, \uu\} \leq \uu$ and $\F$ is generated by at most $\kappa$ many elements, it cannot be an ultrafilter.
 So fix $B \subset \omega$ such that neither $B$ nor $\omega \setminus B$ belongs to $\F$.
 Let $Y = {\bigcup}_{n \in B}{{J}_{n}}$.
 Fix any $\xi < \kappa$ and suppose $\alpha < \kappa$ is such that ${u}_{\alpha} = \{\xi\}$.
 Since ${A}_{\alpha}$ is neither almost included in $B$ nor in $\omega \setminus B$, it follows that $\lc {A}_{\alpha} \cap \left( \omega \setminus B \right) \rc = \lc {A}_{\alpha} \cap B \rc = \omega$.
 Therefore $\existsinf n \in B\[{X}_{\xi} \cap {J}_{n} \neq 0 \]$ and $\existsinf n \in \left( \omega \setminus B \right)\[{X}_{\xi} \cap {J}_{n} \neq 0\]$.
 It follows that $\lc {X}_{\xi} \cap Y \rc= \lc {X}_{\xi} \cap \left( \omega \setminus Y \right) \rc = \omega$.
 Thus $Y \subset \omega$ and it reaps the family $\{{X}_{\xi}: \xi < \kappa\}$.
 We have proved that any family of at most $\kappa$ many elements of $\cube$ can be reaped, whence $\rr > \kappa = \min\{\d, \rr\}$.
 However this together with the hypothesis that $\kappa < \min\{\d, \uu\}$ implies that $\d = \kappa < \min\{\d, \uu\} \leq \d$, which is a contradiction.
\end{proof}
We thus get the following ``improvement'' of Theorem \ref{thm:main2}.
\begin{Cor} \label{cor:dunonZ_0}
$\min\{\d, \uu\} \leq {\non}^{\ast}({\ZZZ}_{0})$.
\end{Cor}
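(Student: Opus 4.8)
The plan is to derive this directly from the two results immediately preceding it. By Lemma~\ref{lem:dur} we have the $\ZFC$ identity $\min\{\d, \uu\} = \min\{\d, \rr\}$, and by Theorem~\ref{thm:main2} we have $\min\{\d, \rr\} \leq {\non}^{\ast}({\ZZZ}_{0})$. Chaining these two facts yields $\min\{\d, \uu\} = \min\{\d, \rr\} \leq {\non}^{\ast}({\ZZZ}_{0})$, which is the desired inequality. So the proof is a single line once Lemma~\ref{lem:dur} and Theorem~\ref{thm:main2} are in hand.

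There is no genuine obstacle here, since all of the substantive combinatorial work has already been done. The density-zero set $Z$ witnessing the bound on ${\non}^{\ast}({\ZZZ}_{0})$ was constructed in the proof of Theorem~\ref{thm:main2} out of a prompt-splitting sequence (available below $\rr$ by Lemma~\ref{lem:rsplit}) and a coarsening interval partition (available below $\d$ by Lemma~\ref{lem:lessd}). The passage from $\rr$ to $\uu$ rests only on the standard inequality $\rr \leq \uu$ and on the ultrafilter argument inside Lemma~\ref{lem:dur}: if $\kappa = \min\{\d,\rr\}$ were strictly below $\min\{\d,\uu\}$, then the filter generated by the sets $\{n \in \omega : \forall \xi \in {u}_{\alpha}\,[{X}_{\xi} \cap {J}_{n} \neq 0]\}$ would fail to be an ultrafilter, and any set $B$ not decided by it would produce, via $Y = {\bigcup}_{n \in B}{{J}_{n}}$, a reaping set for an arbitrary $\kappa$-sized family, contradicting $\kappa < \rr$. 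Thus Corollary~\ref{cor:dunonZ_0} is purely a bookkeeping consequence of the earlier results, requiring no new ideas.
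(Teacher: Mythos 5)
Your proposal is correct and matches the paper exactly: Corollary~\ref{cor:dunonZ_0} is obtained there, without further argument, by combining Theorem~\ref{thm:main2} with the $\ZFC$ equality $\min\{\d, \rr\} = \min\{\d, \uu\}$ of Lemma~\ref{lem:dur}. The extra remarks you add about the internal workings of those earlier results are accurate but not needed for this deduction.
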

\section{Questions} \label{sec:q}
An outstanding open question is about the connection between ${\cov}^{\ast}({\ZZZ}_{0})$ and $\b$, which is closely related to what forcings can diagonalize $\V \cap {\ZZZ}_{0}$.
\begin{Question} \label{q:cov_z0b}
  Is ${\cov}^{\ast}({\ZZZ}_{0}) \leq \b$?
  Is $\d \leq {\non}^{\ast}({\ZZZ}_{0})$?
  Is there a proper forcing which diagonalizes $\V \cap {\ZZZ}_{0}$ while preserving all unbounded families?
\end{Question}
\def\polhk#1{\setbox0=\hbox{#1}{\ooalign{\hidewidth
  \lower1.5ex\hbox{`}\hidewidth\crcr\unhbox0}}}
\providecommand{\bysame}{\leavevmode\hbox to3em{\hrulefill}\thinspace}
\providecommand{\MR}{\relax\ifhmode\unskip\space\fi MR }
\providecommand{\MRhref}[2]{%
  \href{http://www.ams.org/mathscinet-getitem?mr=#1}{#2}
}
\providecommand{\href}[2]{#2}

\end{document}